\documentclass[11pt]{article}
\usepackage{setspace}
\usepackage{amsmath} 
\usepackage{amssymb}
\usepackage{latexsym}
\usepackage{xcolor}
\usepackage{fancyhdr}
\allowdisplaybreaks 

 \usepackage{comment}

\def\XXint#1#2#3{{\setbox0=\hbox{$#1{#2#3}{\int}$} 
\vcenter{\hbox{$#2#3$}}\kern-.5\wd0}}   

 \numberwithin{equation}{section}
\newtheorem{theorem}[equation]{Theorem}
\newtheorem{proposition}[equation]{Proposition}
\newtheorem{definition}[equation]{Definition}
\newtheorem{remark}[equation]{Remark}
\newtheorem{lemma}[equation]{Lemma}
\newtheorem{corollary}[equation]{Corollary}

\title{A Helmholtz-Weyl decomposition for H\"{o}lder spaces. A nonvariational approach.
 } 
 
\author{  
Massimo Lanza de Cristoforis
\\
Dipartimento di Matematica `Tullio Levi-Civita', 
\\
Universit\`a degli Studi di Padova, 
\\
Via Trieste 63, Padova 35121, 
Italy. 
\\
E-mail: mldc@math.unipd.it   }

\date{\ }

\begin{document}
 \maketitle


\noindent
{\bf Abstract:}  We prove a form of the fundamental theorem of vector calculus
 for H\"older continuous vector fields  with no assumption  on the derivatives
 that exploits Schauder spaces with negative exponents. Then by developing an idea of von Wahl in the context of Lebesgue spaces, we prove a form of the Helmholtz Weyl decomposition Theorem
  for spaces of H\"{o}lder continuous functions, with explicit formulas for the projection operators.

\vspace{\baselineskip}

\noindent
{\bf Keywords:}  fundamental theorem of vector calculus, Helmholtz Weyl decomposition, H\"{o}lder spaces, Schauder spaces with negative exponent.

\par
\noindent   
{{\bf 2020 Mathematics Subject Classification:}} 31B10,  35J05, 35J57, 35Q60

\section{Introduction}
 The Helmholtz-Weyl decomposition of a three dimensional vector field is a classical topic and we refer to
book of Galdi \cite[\S III.1]{Ga11} for a historical account. See also
Alberti, Brown, Marletta and Wood \cite[\S 3]{AlBrMaWo19}. Here we plan to prove 
a Helmholtz-Weyl decomposition   
by writing an explicit form of the projection operators in the frame of H\"{o}lder spaces.\par
  
The first step consists in proving the fundamental theorem of vector calculus
 for H\"older continuous vector fields in the form of Theorem \ref{thm:cudirepl}. 
 The main point here is that   we make no assumption whatsoever on the derivatives of the involved 
   H\"older continuous vector fields.\par
 
 Then we are ready to prove our form of the Helmholtz Weyl decomposition Theorem \ref{thm:hewedeco0a}. We do so by proving a formula for the projections 
  that develops from an idea of von Wahl \cite{vo90} in the context of Lebesgue spaces, although here we have to modify it for H\"{o}lder spaces are not separable.

  The paper is organized as follows. Section \ref{sec:prelnot} is a section of preliminaries and notation. In Section \ref{sec:dndbdry}, we introduce a distributional form of the normal derivative 
  for H\"{o}lder continuous  functions   of \cite{La25}.  In section \ref{sec:acsilah-1a}, we introduce some preliminaries on the acoustic layer potentials. In Section \ref{sec:tandeho}, we introduce the tangential derivatives of H\"{o}lder continuous functions on the boundary. In section \ref{sec:acvolpot} we introduce some   known properties of the acoustic volume potential.
  In section \ref{sec:futvean}, we prove  the fundamental Theorem \ref{thm:cudirepl} of vector calculus   for H\"{o}lder continuous  functions. In Section \ref{sec:prelNeum} we present a technical result on the Neumann problem that we need in the sequel. In Section \ref{sec:helweyl}, we prove the Helmholtz-Weyl decomposition 
  Theorem \ref{thm:hewedeco0a} for  H\"older spaces.

\section{Preliminaries and notation}\label{sec:prelnot} Unless otherwise specified,  we assume  throughout the paper that
\[
n\in {\mathbb{N}}\setminus\{0,1\}\,,
\]
where ${\mathbb{N}}$ denotes the set of natural numbers including $0$. 
If $X$, $Y$ and $Z$ are normed spaces, then ${\mathcal{L}}(X,Y)$ denotes the space of linear and continuous maps from $X$ to $Y$ and ${\mathcal{L}}^{(2)}(X\times Y, Z)$ 
denotes the space of bilinear and continuous maps from $X\times Y$ to $Z$ with their usual operator norm (cf.~\textit{e.g.}, \cite[pp.~16, 621]{DaLaMu21}). $I$ denotes the identity map. ${\mathrm{Im}}$  and ${\mathrm{Ker}}$ denote  the image (or range) and the kernel (or null space) of an operator, respectively.
  $|A|$ denotes the operator norm of a matrix $A$ with real (or complex) entries,  $A^{t}$ denotes the transpose matrix of $A$.	As customary in the literature of electromagnetism, we  set
\begin{eqnarray}\label{eq:scvepr}
 \lefteqn{a\cdot b\equiv \sum_{j=1}^na_jb_j\qquad\forall a\equiv (a_1,\dots,a_n), b\equiv (b_1,\dots,b_n)\in      {\mathbb{C}}^n\,,
 }
 \\ \nonumber
 \lefteqn{a\bar{\wedge} b\equiv 
 \left(a_2b_3-a_3b_2,a_3b_1-a_1b_3,a_1b_2-a_2b_1\right)
 }
 \\ \nonumber
&&\qquad\qquad\qquad\qquad\qquad\qquad 
  \qquad\forall a\equiv (a_1,a_2,a_3), b\equiv (b_1,b_2,b_3)\in      {\mathbb{C}}^3\,,
\end{eqnarray}
thus not only for real components.  For all $r\in]0,+\infty[$, $ x\in{\mathbb{R}}^{n}$, 
$x_{j}$ denotes the $j$-th coordinate of $x$, 
$| x|$ denotes the Euclidean modulus of $ x$ in
${\mathbb{R}}^{n}$, and ${\mathbb{B}}_{n}( x,r)$ denotes the ball $\{
y\in{\mathbb{R}}^{n}:\, | x- y|<R\}$. 
 
 Let $\Omega$ be an open subset of ${\mathbb{R}}^n$. $C^{1}(\Omega)$ denotes the set of continuously differentiable functions from $\Omega$ to ${\mathbb{C}}$. 
 If $s\in {\mathbb{N}}\setminus\{0\}$, $f\in \left(C^{1}(\Omega)\right)^{s} $, then   $Df$ denotes the Jacobian matrix of $f$.   
 Instead, we denote the gradient of a function  by  $\nabla$. 
For us the gradient is always a column vector. 
Let  $\eta\equiv
(\eta_{1},\dots ,\eta_{n})$ be in ${\mathbb{N}}^{n}$, $|\eta |\equiv
\eta_{1}+\dots +\eta_{n}  $. Then $D^{\eta} f$ denotes
$\frac{\partial^{|\beta|}f}{\partial
x_{1}^{\eta_{1}}\dots\partial x_{n}^{\eta_{n}}}$, and we also use standard abbreviations as $\partial_{x_l}\equiv \frac{\partial}{\partial x_l}$.\par

For the (classical) definition of   open Lipschitz subset of ${\mathbb{R}}^n$ and of   open subset of ${\mathbb{R}}^n$ 
   of class $C^{m}$ or of class $C^{m,\alpha}$
  and of the H\"{o}lder and Schauder spaces $C^{m,\alpha}(\overline{\Omega})$
  on the closure $\overline{\Omega}$ of  an open set $\Omega$ and 
  of the H\"{o}lder and Schauder spaces
   $C^{m,\alpha}(\partial\Omega)$ 
on the boundary $\partial\Omega$ of an open set $\Omega$ for some $m\in{\mathbb{N}}$, $\alpha\in ]0,1]$, we refer for example to
    Dalla Riva, the author and Musolino  \cite[\S 2.3, \S 2.6, \S 2.7, \S 2.9, \S 2.11, \S 2.13,   \S 2.20]{DaLaMu21}.  If $m\in {\mathbb{N}}$, 
 $C^{m}_b(\overline{\Omega})$ denotes the space of $m$-times continuously differentiable functions from $\Omega$ to ${\mathbb{C}}$ such that all 
the partial derivatives up to order $m$ have a bounded continuous extension to    $\overline{\Omega}$ and we set
\[
\|f\|_{   C^{m}_{b}(
\overline{\Omega} )   }\equiv
\sum_{|\eta|\leq m}\, \sup_{x\in \overline{\Omega}}|D^{\eta}f(x)|
\qquad\forall f\in C^{m}_{b}(
\overline{\Omega} )\,.
\]
If $\alpha\in ]0,1]$, then 
$C^{m,\alpha}_b(\overline{\Omega})$ denotes the space of functions of $C^{m}_{b}(
\overline{\Omega}) $  such that the  partial derivatives of order $m$ are $\alpha$-H\"{o}lder continuous in $\Omega$. Then we equip $C^{m,\alpha}_{b}(\overline{\Omega})$ with the norm
\[
\|f\|_{  C^{m,\alpha}_{b}(\overline{\Omega})  }\equiv 
\|f\|_{  C^{m }_{b}(\overline{\Omega})  }
+\sum_{|\eta|=m}|D^{\eta}f|_{\alpha}\qquad\forall f\in C^{m,\alpha}_{b}(\overline{\Omega})\,,
\]
where $|D^{\eta}f|_{\alpha}$ denotes the $\alpha$-H\"{o}lder constant of the partial derivative $D^{\eta}f$ of order $\eta$ of $f$ in $\Omega$. If $\Omega$ is bounded, we obviously have $C^{m }_{b}(\overline{\Omega})=C^{m } (\overline{\Omega})$ and $C^{m,\alpha}_{b}(\overline{\Omega})=C^{m,\alpha} (\overline{\Omega})$. 
  Then $C^{m,\alpha}_{{\mathrm{loc}}}(\overline{\Omega }) $  denotes 
the space  of those functions $f\in C^{m}(\overline{\Omega} ) $ such that $f_{|\overline{\Omega'}} $ belongs to $
C^{m,\alpha}(   \overline{ \Omega' }   )$ for all bounded open subsets $\Omega'$ of ${\mathbb{R}}^n$ such that $\overline{\Omega'}\subseteq\overline{\Omega}$. 
The space of complex valued functions of class $C^m$ with compact support in an open set $\Omega$ of ${\mathbb{R}}^n$ is denoted $C^m_c(\Omega)$ and similarly for $C^\infty_c(\Omega)$. We also set ${\mathcal{D}}(\Omega)\equiv C^\infty_c(\Omega)$. Then the dual ${\mathcal{D}}'(\Omega)$ is known to be the space of distributions in $\Omega$. The support either of a function or of a distribution is denoted by the abbreviation `${\mathrm{supp}}$'.  We also set
\begin{equation}\label{eq:exto}
\Omega^-\equiv {\mathbb{R}}^n\setminus\overline{\Omega}\,,
\end{equation} 
for the exterior of $\Omega$. If $\Omega$ is a bounded open Lipschitz subset of ${\mathbb{R}}^n$, then $\Omega$  is known to have has at most a finite number of pairwise disjoint  connected components, which we denote by $\Omega_{1}$,\dots, $\Omega_{\varkappa^{+}}$ and which are open and $\Omega^-$ is known to have  at most a finite number of  pairwise disjoint  connected components, which we denote by $(\Omega^{-})_{0}$,
$(\Omega^{-})_{1}$, \dots, $(\Omega^{-})_{\varkappa^{-}}$ and which are open.   One and only one of such connected components is unbounded. We denote it by $(\Omega^{-})_{0}$  (cf.~\textit{e.g.}, \cite[Lemma~2.38]{DaLaMu21}).\par  

We denote by $\nu_\Omega$ or simply by $\nu$ the outward unit normal of $\Omega$ on $\partial\Omega$. Then $\nu_{\Omega^-}=-\nu_\Omega$ is the outward unit normal of $\Omega^-$ on $\partial\Omega=\partial\Omega^-$.

Morever, we retain the standard notation for the Lebesgue spaces $L^p$ for $p\in [1,+\infty]$ (cf.~\textit{e.g.}, Folland \cite[Chapter~6]{Fo99}, \cite[\S 2.1]{DaLaMu21}) and
$m_n$ denotes the
$n$ dimensional Lebesgue measure.\par

We now summarize the definition and some elementary properties of the Schau\-der space $C^{-1,\alpha}(\overline{\Omega})$ 
by following the presentation of Dalla Riva, the author and Musolino \cite[\S 2.22]{DaLaMu21}.
\begin{definition} 
\label{defn:sch-1}\index{Schauder space!with negative exponent}
 Let $\alpha\in]0,1]$. Let $\Omega$ be a bounded open subset of ${\mathbb{R}}^{n}$. We denote by $C^{-1,\alpha}(\overline{\Omega})$ the subspace 
 \[
 \left\{
 f_{0}+\sum_{j=1}^{n}\frac{\partial}{\partial x_{j}}f_{j}:\,f_{j}\in 
 C^{0,\alpha}(\overline{\Omega})\ \forall j\in\{0,\dots,n\}
 \right\}\,,
 \]
 of the space of distributions ${\mathcal{D}}'(\Omega)$  in $\Omega$ and we set
 \begin{eqnarray}
\label{defn:sch-2}
\lefteqn{
\|f\|_{  C^{-1,\alpha}(\overline{\Omega})  }
\equiv\inf\biggl\{\biggr.
\sum_{j=0}^{n}\|f_{j}\|_{ C^{0,\alpha}(\overline{\Omega})  }
:\,
}
\\ \nonumber
&&\qquad\qquad\qquad\quad
f=f_{0}+\sum_{j=1}^{n}\frac{\partial}{\partial x_{j}}f_{j}\,,\ 
f_{j}\in C^{0,\alpha}(\overline{\Omega})\ \forall j\in \{0,\dots,n\}
\biggl.\biggr\}\,.
\end{eqnarray}
\end{definition}
$(C^{-1,\alpha}(\overline{\Omega}), \|\cdot\|_{  C^{-1,\alpha}(\overline{\Omega})  })$ is known to be a Banach space and  is continuously embedded into ${\mathcal{D}}'(\Omega)$. Also, the definition of the norm $\|\cdot\|_{  C^{-1,\alpha}(\overline{\Omega})  }$ implies that $C^{0,\alpha}(\overline{\Omega})$ is continuously embedded into $C^{-1,\alpha}(\overline{\Omega})$ and that the partial differentiation $\frac{\partial}{\partial x_{j}}$ is continuous from 
$C^{0,\alpha}(\overline{\Omega})$ to $C^{-1,\alpha}(\overline{\Omega})$ for all $j\in\{1,\dots,n\}$.  Generically, the  elements of $C^{-1,\alpha}(\overline{\Omega})$ for $\alpha\in]0,1[$ are not integrable functions, but distributions in $\Omega$.  Then we have the following statement of \cite[Prop.~3.1]{La24c} that shows that the elements of $C^{-1,\alpha}(\overline{\Omega}) $  can be extended to elements of the dual of $C^{1,\alpha}(\overline{\Omega})$.  Here we emphasize that   the elements of $C^{-1,\alpha}(\overline{\Omega}) $ belong   to the dual of ${\mathcal{D}}(\Omega)$ by definition.
\begin{proposition}\label{prop:nschext}
 Let $\alpha\in]0,1[$. Let $\Omega$ be a bounded open Lipschitz subset of ${\mathbb{R}}^{n}$.  There exists one and only one  linear and continuous extension operator $E^\sharp_\Omega$ from $C^{-1,\alpha}(\overline{\Omega})$ to $\left(C^{1,\alpha}(\overline{\Omega})\right)'$ such that
 \begin{eqnarray}\label{prop:nschext2}
\lefteqn{
\langle E^\sharp_\Omega[f],v\rangle 
}
\\ \nonumber
&&\ \
 =
\int_{\Omega}f_{0}v\,dx+\int_{\partial\Omega}\sum_{j=1}^{n} (\nu_{\Omega})_{j}f_{j}v\,d\sigma
 -\sum_{j=1}^{n}\int_{\Omega}f_{j}\frac{\partial v}{\partial x_j}\,dx
\quad \forall v\in C^{1,\alpha}(\overline{\Omega}) 
\end{eqnarray}
for all $f=  f_{0}+\sum_{j=1}^{n}\frac{\partial}{\partial x_{j}}f_{j}\in C^{-1,\alpha}(\overline{\Omega}) $. Moreover, 
\begin{equation}\label{prop:nschext1}
E^\sharp_\Omega[f]_{|\Omega}=f\,, \ i.e.,\ 
\langle E^\sharp_\Omega[f],v\rangle =\langle f,v\rangle \qquad\forall v\in {\mathcal{D}}(\Omega)
\end{equation}
for all $f\in C^{-1,\alpha}(\overline{\Omega})$ and
\begin{equation}\label{prop:nschext3}
\langle E^\sharp_\Omega[f],v\rangle =\langle f,v\rangle =\int_\Omega fv\,dx
\qquad\forall v\in C^{1,\alpha}(\overline{\Omega})
\end{equation}
for all $f\in C^{0,\alpha}(\overline{\Omega})$.
\end{proposition}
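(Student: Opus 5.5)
The plan is to \emph{define} $E^\sharp_\Omega[f]$ by the right-hand side of (\ref{prop:nschext2}) and to show that this is legitimate. Uniqueness is then automatic, because (\ref{prop:nschext2}) prescribes the value of $E^\sharp_\Omega[f]$ on every $v$. Likewise (\ref{prop:nschext3}) follows by choosing the representation $f_0=f$, $f_j=0$ for $j\ge1$. Finally, (\ref{prop:nschext1}) holds because for $v\in{\mathcal{D}}(\Omega)$ the boundary integral vanishes, and the remaining terms give, by the definition of distributional derivative, $\langle f_0,v\rangle+\sum_j\langle \partial_{x_j}f_j,v\rangle=\langle f,v\rangle$.

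\textbf{Step 1 (well-posedness).} The core issue is that the right-hand side of (\ref{prop:nschext2}) must not depend on the chosen representation of $f$. By linearity it suffices to show the following. Let $F\equiv(f_1,\dots,f_n)$ and suppose $f_0+\sum_j\partial_{x_j}f_j=0$ in ${\mathcal{D}}'(\Omega)$ with $f_j\in C^{0,\alpha}(\overline{\Omega})$. Then for all $v\in C^{1,\alpha}(\overline{\Omega})$
\[
\int_\Omega f_0 v\,dx+\int_{\partial\Omega}(\nu_\Omega\cdot F)\,v\,d\sigma-\int_\Omega F\cdot\nabla v\,dx=0.
\]
This is a Gauss--Green formula for a continuous field whose distributional divergence is the H\"older function $-f_0$.

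\textbf{Step 2 (continuity).} Once Step 1 is established, $E^\sharp_\Omega$ is well defined and linear. For any representation, $|\langle E^\sharp_\Omega[f],v\rangle|\le C\sum_{j=0}^n\|f_j\|_{C^{0,\alpha}(\overline{\Omega})}\|v\|_{C^{1,\alpha}(\overline{\Omega})}$, where $C$ depends only on $m_n(\Omega)$ and on the surface measure of $\partial\Omega$. Taking the infimum over representations gives continuity from $C^{-1,\alpha}(\overline{\Omega})$ to $(C^{1,\alpha}(\overline{\Omega}))'$.

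\textbf{Step 3 (proof of Step 1, main obstacle).} The distributional identity holds only against test functions compactly supported in $\Omega$, so the boundary term must be recovered by a cut-off exhausting $\Omega$ from inside.
\begin{itemize}
\item The distribution $f$ has order one with continuous coefficients, so the identity extends by density to test functions in $C^1_c(\Omega)$.
\item Cover $\partial\Omega$ by finitely many cylinders in which, after a rotation, $\Omega=\{x_n>\gamma(x')\}$ with $\gamma$ Lipschitz. Take a smooth partition of unity $\{\theta_k\}$ subordinate to this cover, plus one function with compact support in $\Omega$.
\item Fix $\chi\in C^\infty({\mathbb{R}})$ with $\chi=0$ on $]-\infty,1]$ and $\chi=1$ on $[2,+\infty[$. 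In the $k$-th cylinder set $\psi_{\varepsilon}=\chi((x_n-\gamma(x'))/\varepsilon)$.
\item Test the identity with $\theta_k\psi_\varepsilon v\in C^1_c(\Omega)$. (If needed, first mollify $\gamma$ slightly inside $\Omega$; it is enough that the product be $C^1_c$ up to Lipschitz regularity, handled by a further density step.)
\end{itemize}
This yields
\[
0=\int_\Omega f_0\theta_k\psi_\varepsilon v\,dx-\int_\Omega\psi_\varepsilon F\cdot\nabla(\theta_k v)\,dx-\int_\Omega \theta_k v\,F\cdot\nabla\psi_\varepsilon\,dx .
\]
The first two terms converge by dominated convergence as $\varepsilon\to0$. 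For the third, note that $\nabla\psi_\varepsilon=\varepsilon^{-1}\chi'(\cdot)(-\nabla\gamma,1)$. Substituting $x_n=\gamma(x')+\varepsilon t$ and using the uniform continuity of $\theta_k vF$ up to $\partial\Omega$, together with $\int\chi'=1$, this term tends to
\[
\int\theta_k vF(x',\gamma(x'))\cdot(-\nabla\gamma(x'),1)\,dx' .
\]
Since $\nu_\Omega\,d\sigma=(\nabla\gamma,-1)\,dx'$ on the graph, this limit equals $-\int_{\partial\Omega}\theta_k v\,\nu_\Omega\cdot F\,d\sigma$. Summing over $k$, together with the interior piece where no boundary term appears, gives the identity of Step 1.

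The delicate point is the $C^1$ regularity of $\psi_\varepsilon$ when $\gamma$ is merely Lipschitz. I would handle it either by a regularized distance function, which is smooth in $\Omega$ with gradient comparable to $\nu$ in the averaged sense, or by noting that the identity extends to Lipschitz compactly supported test functions: $F$ is continuous, and the $W^{1,\infty}$ test functions can be mollified with $\nabla$ converging a.e. and boundedly.
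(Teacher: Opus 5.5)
Your proof is correct. The paper itself does not prove this proposition; it imports it from \cite[Prop.~3.1]{La24c}. So there is no in-text argument to compare with, and your proposal supplies a self-contained one.

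You define $E^\sharp_\Omega[f]$ by the right-hand side of (\ref{prop:nschext2}). This makes uniqueness, linearity, (\ref{prop:nschext1}), (\ref{prop:nschext3}) and the bound $|\langle E^\sharp_\Omega[f],v\rangle|\le C\sum_{j=0}^n\|f_j\|_{C^{0,\alpha}(\overline{\Omega})}\|v\|_{C^{1,\alpha}(\overline{\Omega})}$ immediate. You correctly locate all the content in the representation-independence of Step 1. That step is a Gauss--Green formula for the continuous field $vF$, whose distributional divergence $F\cdot\nabla v-f_0v$ is bounded.

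Your cutoff argument proves it:
\begin{enumerate}
\item $\theta_k\psi_\varepsilon v$ is Lipschitz with compact support in $\Omega$.
\item The identity $\int_\Omega f_0\phi\,dx=\int_\Omega F\cdot\nabla\phi\,dx$ does pass to such $\phi$. Mollify $\phi$: $\nabla(\phi\ast\rho_\delta)\to\nabla\phi$ a.e. with a uniform bound, and $F$ is bounded.
\item The substitution $x_n=\gamma(x')+\varepsilon t$, together with the uniform continuity of $\theta_k vF$ on $\overline{\Omega}$, gives the limit you write.
\item The a.e. identity $\nu_\Omega\,d\sigma=(\nabla\gamma,-1)\,dx'$ on the graph turns the third term into $+\int_{\partial\Omega}\theta_k v\,\nu_\Omega\cdot F\,d\sigma$ once the minus sign in front is accounted for.
\end{enumerate}
Summing over $k$, with $\sum_k\theta_k=1$ near $\overline{\Omega}$, gives exactly the identity of Step 1.

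In your last paragraph, commit to the second option, extending the identity to compactly supported Lipschitz test functions. It is complete as it stands. The regularized-distance alternative is only sketched: a gradient ``comparable to $\nu$ in the averaged sense'' does not by itself identify the limiting boundary term as $\nu_\Omega\cdot F$. Mollifying $\gamma$ is unnecessary.

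Your route also shows that the H\"older exponent plays no role in the well-posedness step. Only the continuity of $f_0,\dots,f_n$ on $\overline{\Omega}$ and the Lipschitz character of $\partial\Omega$ are used. So the same formula already defines a representation-independent functional on $C^{1}(\overline{\Omega})$, bounded by $C\sum_j\|f_j\|_{C^{0,\alpha}(\overline{\Omega})}\|v\|_{C^{1}(\overline{\Omega})}$.
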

When no ambiguity can arise, we simply write $E^\sharp$ instead of $E^\sharp_\Omega$. To see why the extension operator $E^\sharp$ can be considered as `canonical', we refer to \cite[Prop.~7]{La24d}. 
{\color{black}In case $f\equiv (f_1,\dots,f_n)\in C^{-1,\alpha}(\overline{\Omega}, {\mathbb{C}}^n)$, we set
\[
E^\sharp[f]\equiv (E^\sharp[f_1],\dots,E^\sharp[f_n])\,.
\]}
Then we have the following immediate corollary.
\begin{corollary}\label{corol:gengr}
 Let $\alpha\in]0,1[$. Let $\Omega$ be a bounded open Lipschitz subset of ${\mathbb{R}}^{n}$. If $g\in C^{0,\alpha}(\overline{\Omega})$ and $l\in\{1,\dots,n\}$, then
 \begin{equation}\label{corol:gengr1}
\langle E^\sharp_\Omega[\frac{\partial g}{\partial x_l}],v\rangle=\int_{\partial\Omega}(\nu_\Omega)_l g v\,d\sigma
-
\int_\Omega g\frac{\partial v}{\partial x_l}\,dx
\quad \forall v\in C^{1,\alpha}(\overline{\Omega}) \,.
\end{equation}
\end{corollary}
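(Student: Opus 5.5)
This is a direct specialization of Proposition \ref{prop:nschext}. The plan is to write $\frac{\partial g}{\partial x_l}$ in the canonical form $f_0+\sum_{j=1}^n\frac{\partial}{\partial x_j}f_j$ required by Definition \ref{defn:sch-1}, and then read off the value of $E^\sharp_\Omega$ from formula (\ref{prop:nschext2}).

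Concretely, take $f_0\equiv 0$, $f_l\equiv g$ and $f_j\equiv 0$ for $j\neq l$. All of these functions belong to $C^{0,\alpha}(\overline{\Omega})$ because $g$ does. Hence $\frac{\partial g}{\partial x_l}=f_0+\sum_{j=1}^n\frac{\partial}{\partial x_j}f_j$ as distributions in $\Omega$, and in particular $\frac{\partial g}{\partial x_l}\in C^{-1,\alpha}(\overline{\Omega})$.

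Proposition \ref{prop:nschext} states that (\ref{prop:nschext2}) holds for every representation of $f$ of this type. This is precisely the content of the uniqueness and well-definedness of $E^\sharp_\Omega$, so there is no need to check independence of the representation. Substituting our choice of $f_0,\dots,f_n$ into (\ref{prop:nschext2}), the volume integral of $f_0v$ vanishes. In the boundary sum only the term $\int_{\partial\Omega}(\nu_\Omega)_l\,g\,v\,d\sigma$ survives, and in the last sum only $-\int_\Omega g\,\frac{\partial v}{\partial x_l}\,dx$ survives. This yields (\ref{corol:gengr1}) for all $v\in C^{1,\alpha}(\overline{\Omega})$.

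There is no real obstacle. The only point deserving a remark is that the formula in Proposition \ref{prop:nschext} is asserted for every admissible decomposition of $f$, not just for a particular one. That is exactly what allows us to use the trivial decomposition above.
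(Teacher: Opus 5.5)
Your proposal is correct and is exactly the argument the paper intends: it states the result as an immediate corollary of Proposition \ref{prop:nschext}, obtained by inserting the trivial decomposition $f_0=0$, $f_l=g$, $f_j=0$ for $j\neq l$ into (\ref{prop:nschext2}). You are also right that (\ref{prop:nschext2}) holds for every admissible representation of $f$, which is what justifies using this particular one.
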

Then we can easily prove  the identity of the following statement.
\begin{proposition}\label{prop:gendidis}
 Let $\alpha\in]0,1[$. Let $\Omega$ be a bounded open Lipschitz subset of ${\mathbb{R}}^{n}$.
 If $A\in C^{0,\alpha}(\overline{\Omega},{\mathbb{C}}^n)$ and $v\in 
C^{1,\alpha}(\overline{\Omega})$, then
\[
\langle E^\sharp_\Omega[{\mathrm{div}}\, A],v\rangle 
+\int_\Omega A\cdot Dv\,dx=\int_{\partial\Omega}\nu_\Omega\cdot A\, v\,d\sigma 
\]
(cf.~(\ref{eq:scvepr})).
\end{proposition}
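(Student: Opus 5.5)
The identity follows from the linearity of $E^\sharp_\Omega$ and Corollary \ref{corol:gengr}, applied componentwise. Writing $A\equiv(A_1,\dots,A_n)$ with $A_l\in C^{0,\alpha}(\overline{\Omega})$, we have ${\mathrm{div}}\, A=\sum_{l=1}^n\frac{\partial A_l}{\partial x_l}$. By Definition \ref{defn:sch-1} (take $f_0=0$ and $f_l=A_l$), ${\mathrm{div}}\, A$ belongs to $C^{-1,\alpha}(\overline{\Omega})$, so $E^\sharp_\Omega[{\mathrm{div}}\, A]$ is defined. Each summand $\frac{\partial A_l}{\partial x_l}$ also belongs to $C^{-1,\alpha}(\overline{\Omega})$. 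Since $E^\sharp_\Omega$ is linear,
\[
\langle E^\sharp_\Omega[{\mathrm{div}}\, A],v\rangle=\sum_{l=1}^n\langle E^\sharp_\Omega[\frac{\partial A_l}{\partial x_l}],v\rangle\qquad\forall v\in C^{1,\alpha}(\overline{\Omega})\,.
\]

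Next we apply Corollary \ref{corol:gengr} with $g=A_l$ to each term of the sum, which gives
\[
\langle E^\sharp_\Omega[{\mathrm{div}}\, A],v\rangle=\sum_{l=1}^n\int_{\partial\Omega}(\nu_\Omega)_lA_l v\,d\sigma-\sum_{l=1}^n\int_\Omega A_l\frac{\partial v}{\partial x_l}\,dx\,.
\]
By (\ref{eq:scvepr}), the first sum is $\int_{\partial\Omega}\nu_\Omega\cdot A\,v\,d\sigma$ and the second is $\int_\Omega A\cdot Dv\,dx$. Here $Dv$ is read as the vector of first partial derivatives of the scalar $v$, and the dot product is the bilinear one, with no complex conjugation. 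Moving the volume integral to the left-hand side gives the stated identity.

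One could also bypass linearity and apply (\ref{prop:nschext2}) directly to the representation $f_0=0$, $f_j=A_j$. This works because Proposition \ref{prop:nschext} asserts (\ref{prop:nschext2}) for every such representation of $f$, so the choice of representation is irrelevant.

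No step is a real obstacle. The only points needing care are that ${\mathrm{div}}\, A$ is a legitimate element of $C^{-1,\alpha}(\overline{\Omega})$ and that the notation matches (\ref{eq:scvepr}).
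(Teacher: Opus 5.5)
Your proof is correct and is essentially the paper's argument: the paper likewise writes $\langle E^\sharp_\Omega[{\mathrm{div}}\,A],v\rangle=\sum_{l=1}^n\langle E^\sharp_\Omega[\frac{\partial A_l}{\partial x_l}],v\rangle$ and applies formula (\ref{corol:gengr1}) to each summand. Your extra remarks are valid but go beyond what the paper's proof spells out: that ${\mathrm{div}}\,A\in C^{-1,\alpha}(\overline{\Omega})$, and that one may instead apply (\ref{prop:nschext2}) directly with $f_0=0$, $f_j=A_j$.
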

{\bf Proof.} By the definition of ${\mathrm{div}}$ and by formula (\ref{corol:gengr1}), we have
\[
\langle E^\sharp_\Omega[{\mathrm{div}}\, A],v\rangle
=\sum_{l=1}^n \langle E^\sharp_\Omega[\frac{\partial A_l}{\partial x_l}],v\rangle
=\sum_{l=1}^n\left\{
\int_{\partial\Omega}(\nu_\Omega)_l A_l v\,d\sigma
-
\int_\Omega A_l\frac{\partial v}{\partial x_l}\,dx
\right\}\,.
\]\hfill  $\Box$ 

\vspace{\baselineskip}

We now show that we can write a (distributional) form of the Divergence Theorem  by exploiting the operator $E^\sharp_\Omega$. Once more, with no assumptions on the first order derivatives.
\begin{theorem}[of the Divergence in distributional form]\label{thm:div0a}
 Let $\alpha\in]0,1[$. Let $\Omega$ be a bounded open Lipschitz subset of ${\mathbb{R}}^{n}$. If $A$ belongs to $ C^{0,\alpha}(\overline{\Omega}, {\mathbb{C}}^n)$, then
 \begin{equation}
\langle E^\sharp_\Omega[{\mathrm{div}}\, A],1\rangle =\int_{\partial\Omega}A\cdot\nu_\Omega\,d\sigma
\end{equation}
(cf.~(\ref{eq:scvepr})).
\end{theorem}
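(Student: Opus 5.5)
The plan is to specialize Proposition~\ref{prop:gendidis} to the test function $v\equiv 1$. Since $\Omega$ is bounded, the constant function $1$ belongs to $C^{1,\alpha}(\overline{\Omega})$, so it is an admissible test function for the functional $E^\sharp_\Omega[{\mathrm{div}}\, A]\in \left(C^{1,\alpha}(\overline{\Omega})\right)'$. This is exactly where the extension operator of Proposition~\ref{prop:nschext} is needed. The distribution ${\mathrm{div}}\,A$ itself acts only on ${\mathcal{D}}(\Omega)$, and $1\notin{\mathcal{D}}(\Omega)$, so the pairing $\langle {\mathrm{div}}\,A,1\rangle$ would not make sense without $E^\sharp_\Omega$.

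Taking $v=1$ in Proposition~\ref{prop:gendidis} gives
\[
\langle E^\sharp_\Omega[{\mathrm{div}}\, A],1\rangle+\int_\Omega A\cdot D1\,dx=\int_{\partial\Omega}\nu_\Omega\cdot A\,d\sigma .
\]
Since $D1=0$ in $\Omega$, the volume integral vanishes. By the symmetry of the bilinear form $a\cdot b$ of (\ref{eq:scvepr}), which involves no complex conjugation, we have $\nu_\Omega\cdot A=A\cdot\nu_\Omega$, and the stated identity follows.

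Equivalently, one can go one level down and apply formula (\ref{corol:gengr1}) of Corollary~\ref{corol:gengr} to each component $A_l$ with $v=1$, then sum over $l\in\{1,\dots,n\}$. This is the same computation that appears in the proof of Proposition~\ref{prop:gendidis}.

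There is no genuine obstacle. The only points to check are that $1\in C^{1,\alpha}(\overline{\Omega})$, which holds because $\Omega$ is bounded, and that no conjugation enters the product $A\cdot\nu_\Omega$, which holds by (\ref{eq:scvepr}). The substantive content, namely that $E^\sharp_\Omega$ is well defined independently of the representation $f_0+\sum_j\partial_{x_j}f_j$, is already contained in Proposition~\ref{prop:nschext}.
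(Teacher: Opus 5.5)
Your proposal is correct and is exactly the paper's argument: the paper proves the theorem by applying Proposition~\ref{prop:gendidis} with $v=1$. Your additional checks ($1\in C^{1,\alpha}(\overline{\Omega})$, $D1=0$, and the symmetry of the unconjugated product $A\cdot\nu_\Omega=\nu_\Omega\cdot A$) are the routine details the paper leaves implicit.
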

{\bf Proof.} It suffices to apply Proposition \ref{prop:gendidis} with $v=1$.\hfill  $\Box$ 

\vspace{\baselineskip}

We now show that we can write a (distributional) form of   other classical identities for $\alpha$-H\"{o}lder continuous vector fields by exploiting the operator $E^\sharp_\Omega$. Once more, with no assumptions on the first order derivatives.
\begin{theorem}\label{thm:veiniddis}
Let $\alpha\in]0,1[$. Let $\Omega$ be a bounded open Lipschitz subset of ${\mathbb{R}}^{3}$.  Then the following statements hold (cf.~(\ref{eq:scvepr})).
\begin{enumerate}
\item[(i)] If $A \in C^{0,\alpha}(\overline{\Omega},{\mathbb{C}}^3)$, then
\[
\langle E^\sharp_\Omega[{\mathrm{curl}}\,A], 1\rangle =\int_{\partial\Omega}\nu_\Omega\bar{\wedge} A\,d\sigma\,.
\]
\item[(ii)]  If $A\in C^{0,\alpha}(\overline{\Omega},{\mathbb{C}}^3)$ and
$v\in C^{1,\alpha}(\overline{\Omega},{\mathbb{C}}^3)$, then
\[
\langle E^\sharp_\Omega[{\mathrm{curl}}\,A], v\rangle
-
\int_\Omega A\cdot  {\mathrm{curl}}\,  v\,dx=\int_{\partial\Omega}(\nu_\Omega\bar{\wedge}A)\cdot v\,d\sigma\,.
\]
\item[(iii)]  If $A\in C^{0,\alpha}(\overline{\Omega},{\mathbb{C}}^3)$ and
$v\in C^{1,\alpha}(\overline{\Omega},{\mathbb{C}})$, then
\[
\langle E^\sharp_\Omega[{\mathrm{curl}}\,A], v\rangle
-
\int_\Omega A\bar{\wedge} \nabla v\,dx=\int_{\partial\Omega}(\nu_\Omega\bar{\wedge}A)  v\,d\sigma\,.
\]
\end{enumerate}
\end{theorem}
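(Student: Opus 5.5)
The plan is to reduce all three identities to Corollary \ref{corol:gengr}, applied componentwise. The key observation is that each component of ${\mathrm{curl}}\,A$ is a difference of two first-order partial derivatives of components of $A$, and each of these belongs to $C^{0,\alpha}(\overline{\Omega})$. Concretely,
\[
({\mathrm{curl}}\,A)_1=\frac{\partial A_3}{\partial x_2}-\frac{\partial A_2}{\partial x_3},\quad
({\mathrm{curl}}\,A)_2=\frac{\partial A_1}{\partial x_3}-\frac{\partial A_3}{\partial x_1},\quad
({\mathrm{curl}}\,A)_3=\frac{\partial A_2}{\partial x_1}-\frac{\partial A_1}{\partial x_2}.
\]
Each of these lies in $C^{-1,\alpha}(\overline{\Omega})$, and $E^\sharp_\Omega$ acts on them componentwise. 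By linearity of $E^\sharp_\Omega$ and formula (\ref{corol:gengr1}), for any scalar $v\in C^{1,\alpha}(\overline{\Omega})$ we get, for example,
\[
\langle E^\sharp_\Omega[({\mathrm{curl}}\,A)_1],v\rangle=\int_{\partial\Omega}\bigl((\nu_\Omega)_2A_3-(\nu_\Omega)_3A_2\bigr)v\,d\sigma-\int_\Omega\Bigl(A_3\frac{\partial v}{\partial x_2}-A_2\frac{\partial v}{\partial x_3}\Bigr)dx .
\]
The other two components follow by cyclic permutation. In the boundary term one recognizes $(\nu_\Omega\bar{\wedge}A)_1\,v$. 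In the volume term one recognizes $-(\nabla v\bar{\wedge}A)_1=(A\bar{\wedge}\nabla v)_1$ with the sign conventions of (\ref{eq:scvepr}).

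Statement (iii) is then exactly the vector of these three scalar identities. Here the pairing $\langle E^\sharp_\Omega[{\mathrm{curl}}\,A],v\rangle$ with scalar $v$ is understood componentwise as an element of ${\mathbb{C}}^3$. Moving the volume term to the left gives $\langle E^\sharp_\Omega[{\mathrm{curl}}\,A],v\rangle-\int_\Omega A\bar{\wedge}\nabla v\,dx$. Statement (i) is the special case $v=1$ of (iii), since then $\nabla v=0$.

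For (ii), $v=(v_1,v_2,v_3)$ is vector-valued and the pairing is read as $\sum_{j}\langle E^\sharp_\Omega[({\mathrm{curl}}\,A)_j],v_j\rangle$. I apply the scalar identity above with $v_j$ in the $j$-th component and sum over $j$. The boundary terms add up to $\int_{\partial\Omega}(\nu_\Omega\bar{\wedge}A)\cdot v\,d\sigma$. The volume terms add up to $-\sum_j\int_\Omega (\nabla v_j\bar{\wedge}A)_j\,dx$. It remains to check the pointwise algebraic identity
\[
\sum_{j=1}^3(A\bar{\wedge}\nabla v_j)_j=A\cdot{\mathrm{curl}}\,v .
\]
Writing out the $j=1$ term as $A_2\partial_3v_1-A_3\partial_2v_1$ and collecting the coefficients of $A_1,A_2,A_3$ over all $j$ gives, for instance, the coefficient $\partial_3 v_1-\partial_1 v_3=({\mathrm{curl}}\,v)_2$ for $A_2$, as required.

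The main obstacle is only bookkeeping: keeping the signs of the cross product and the cyclic indices consistent. In particular one must verify the sign in (ii), where the statement has $-\int_\Omega A\cdot{\mathrm{curl}}\,v$ on the left. This corresponds to the volume contribution $+\int_\Omega A\cdot{\mathrm{curl}}\,v$ on the right side of the raw formula, matching the classical identity ${\mathrm{div}}(A\bar{\wedge}v)=v\cdot{\mathrm{curl}}\,A-A\cdot{\mathrm{curl}}\,v$. I would check this explicitly on one component.
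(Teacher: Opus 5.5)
Your proposal is correct and follows the paper's argument: Corollary \ref{corol:gengr} is applied to each component of ${\mathrm{curl}}\,A$, (i) comes from setting $v=1$, and your sign checks, including $\sum_j (A\bar{\wedge}\nabla v_j)_j=A\cdot{\mathrm{curl}}\,v$, are right. The only difference is the order: the paper proves (ii) first and then obtains (iii) by testing with $(v,0,0)$, $(0,v,0)$, $(0,0,v)$, while you prove the componentwise identity (iii) first and sum it to get (ii).
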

{\bf Proof.} (ii) By the definition of ${\mathrm{curl}}$ and by formula (\ref{corol:gengr1}), we have
\begin{eqnarray*} 
\lefteqn{
\langle E^\sharp_\Omega[{\mathrm{curl}}\,A], v\rangle
=\langle E^\sharp_\Omega[\frac{\partial A_3}{\partial x_2}
-\frac{\partial A_2}{\partial x_3}
], v_1\rangle
}
\\ \nonumber
&&\qquad 
-
\langle E^\sharp_\Omega[\frac{\partial A_3}{\partial x_1}
-\frac{\partial A_1}{\partial x_3}
], v_2\rangle
+
\langle E^\sharp_\Omega[\frac{\partial A_2}{\partial x_1}
-\frac{\partial A_1}{\partial x_2}
], v_3\rangle
\\ \nonumber
&&\qquad 
=\int_{\partial\Omega}(\nu_\Omega)_2A_3v_1\,d\sigma-\int_\Omega A_3\frac{\partial v_1}{\partial x_2}\,dx
\\ \nonumber
&&\qquad\quad
-\left(
\int_{\partial\Omega}(\nu_\Omega)_3A_2v_1\,d\sigma-\int_\Omega A_2\frac{\partial v_1}{\partial x_3}\,dx
\right)
\\ \nonumber
&&\qquad\quad
-\biggl\{\int_{\partial\Omega}(\nu_\Omega)_1A_3v_2\,d\sigma-\int_\Omega A_3\frac{\partial v_2}{\partial x_1}\,dx
\\ \nonumber
&&\qquad\quad
-\left(
\int_{\partial\Omega}(\nu_\Omega)_3A_1v_2\,d\sigma-\int_\Omega A_1\frac{\partial v_2}{\partial x_3}\,dx
\right)\biggr\}
\\ \nonumber
&&\qquad\quad 
+\int_{\partial\Omega}(\nu_\Omega)_1A_2v_3\,d\sigma-\int_\Omega A_2\frac{\partial v_3}{\partial x_1}\,dx
\\ \nonumber
&&\qquad\quad
-\left(
\int_{\partial\Omega}(\nu_\Omega)_2A_1v_3\,d\sigma-\int_\Omega A_1\frac{\partial v_3}{\partial x_2}\,dx
\right)
\\ \nonumber
&&\qquad 
=\int_{\partial\Omega}\left((\nu_\Omega)_2A_3-(\nu_\Omega)_3A_2\right)v_1\,d\sigma
-\int_\Omega A_3\frac{\partial v_1}{\partial x_2}-A_2\frac{\partial v_1}{\partial x_3}\,dx
\\ \nonumber
&&\qquad \quad
-\biggl\{
\int_{\partial\Omega}\left((\nu_\Omega)_1A_3-(\nu_\Omega)_3A_1\right)v_2\,d\sigma
-\int_\Omega A_3\frac{\partial v_2}{\partial x_1}-A_1\frac{\partial v_2}{\partial x_3}\,dx
\biggr\}
\\ \nonumber
&&\qquad \quad
+\int_{\partial\Omega}\left((\nu_\Omega)_1A_2-(\nu_\Omega)_2A_1\right)v_3\,d\sigma
-\int_\Omega A_2\frac{\partial v_3}{\partial x_1}-A_1\frac{\partial v_3}{\partial x_2}\,dx\,.
\\ \nonumber
&&\qquad
=\int_{\partial\Omega}(\nu_\Omega \bar{\wedge} A)\cdot v\,d\sigma
+
\int_\Omega \left(\frac{\partial v_3}{\partial x_2}-\frac{\partial v_2}{\partial x_3}\right)A_1\,dx
\\ \nonumber
&&\qquad\quad
-
\int_\Omega \left(\frac{\partial v_3}{\partial x_1}-\frac{\partial v_1}{\partial x_3}\right)A_2\,dx
+
\int_\Omega \left(\frac{\partial v_2}{\partial x_1}-\frac{\partial v_1}{\partial x_2}\right)A_3\,dx
\\ \nonumber
&&\qquad
=\int_{\partial\Omega}(\nu_\Omega \bar{\wedge} A)\cdot v\,d\sigma
+
\int_\Omega A\cdot {\mathrm{curl}}\,v\,dx\,.
\end{eqnarray*}
   The equality in (iii) can be obtained by that in (ii) by taking the vector field $v$ of (ii) equal to the vector fields $(v,0,0)$, $(0,v,0)$ and $(0,0,v)$, where $v$ is the scalar function of (iii). The equality in (i) can be obtained by that in (iii) by setting $v=1$. \hfill  $\Box$ 

\vspace{\baselineskip}

{\color{black}
Next we introduce the following subspace of those $\alpha$-H\"{o}lder continuous functions for which we can define the normal derivative on the boundary (cf.~\cite[Defn.~5.6]{La24c}).

\begin{definition}\label{defn:c0ade}
 Let   $\alpha\in ]0,1]$. Let $\Omega$ be a bounded open  subset of ${\mathbb{R}}^{n}$. Let
 \begin{eqnarray}\label{defn:c0ade1}
C^{0,\alpha}(\overline{\Omega})_\Delta
&\equiv&\biggl\{u\in C^{0,\alpha}(\overline{\Omega}):\,\Delta u\in C^{-1,\alpha}(\overline{\Omega})\biggr\}\,,
\\ \nonumber
\|u\|_{ C^{0,\alpha}(\overline{\Omega})_\Delta }
&\equiv& \|u\|_{ C^{0,\alpha}(\overline{\Omega})}
+\|\Delta u\|_{C^{-1,\alpha}(\overline{\Omega})}
\qquad\forall u\in C^{0,\alpha}(\overline{\Omega})_\Delta\,.
\end{eqnarray}
\end{definition}
Since $C^{0,\alpha}(\overline{\Omega})$ and $C^{-1,\alpha}(\overline{\Omega})$ are Banach spaces,   $\left(\|u\|_{ C^{0,\alpha}(\overline{\Omega})_\Delta }, \|\cdot \|_{ C^{0,\alpha}(\overline{\Omega})_\Delta }\right)$ is a Banach space. }
Next we introduce the following {\color{black}two} approximation lemmas.
\begin{lemma}\label{lem:apr1a}
 Let $\alpha\in]0,1]$. Let $m\in {\mathbb{N}}\setminus\{0\}$, $h\in\{0,\dots,m\}$.
 Let $\Omega$ be a bounded open subset of ${\mathbb{R}}^n$ of class $C^{m,\alpha}$.  If $g\in C^{h,\alpha}(\overline{\Omega})$, then there exists a sequence $\{g_j\}_{j\in {\mathbb{N}}}$ in 
 $C^{\infty}(\overline{\Omega})$ such that
 \begin{equation}\label{lem:apr1a1}
 \sup_{j\in {\mathbb{N}}}\|g_j\|_{
 C^{h,\alpha}(\overline{\Omega}) 
 }<+\infty\,,\qquad
 \lim_{j\to\infty}g_j=g\quad\text{in}\ C^{h,\beta}(\overline{\Omega})\quad \forall\beta\in]0,\alpha[\,.
 \end{equation}
\end{lemma}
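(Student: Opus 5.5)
The plan is to extend $g$ to all of ${\mathbb{R}}^n$, mollify, and restrict back to $\overline{\Omega}$. The uniform bound comes from standard mollifier estimates, and convergence in the weaker norms $C^{h,\beta}$ comes from an interpolation/compactness argument.

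\emph{Extension.} Since $\Omega$ is of class $C^{m,\alpha}$ and $h\le m$, there is a linear extension operator $P$ from $C^{h,\alpha}(\overline{\Omega})$ to $C^{h,\alpha}_c({\mathbb{R}}^n)$ (cf.~\cite[\S 2]{DaLaMu21}). It is built from local flattening charts of class $C^{m,\alpha}$, a higher-order reflection across the flattened boundary, and a finite partition of unity. We may also multiply by a fixed cutoff so that the supports of $P[g]$ lie in a fixed ball. Set $G\equiv P[g]$, so that $G_{|\overline{\Omega}}=g$ and $\|G\|_{C^{h,\alpha}({\mathbb{R}}^n)}\le c\|g\|_{C^{h,\alpha}(\overline{\Omega})}$.

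\emph{Mollification and uniform bound.} Let $\rho_j(x)=j^n\rho(jx)$ be a standard mollifier and set $g_j\equiv (\rho_j*G)_{|\overline{\Omega}}\in C^\infty(\overline{\Omega})$. Since $D^\eta(\rho_j*G)=\rho_j*D^\eta G$ for $|\eta|\le h$, convolution with a probability density does not increase sup norms. It also does not increase $\alpha$-H\"older seminorms, because
\[
|\rho_j*D^\eta G(x)-\rho_j*D^\eta G(y)|\le \int\rho_j(z)|D^\eta G(x-z)-D^\eta G(y-z)|\,dz\le |D^\eta G|_\alpha|x-y|^\alpha .
\]
Hence $\|g_j\|_{C^{h,\alpha}(\overline{\Omega})}\le \|G\|_{C^{h,\alpha}({\mathbb{R}}^n)}$, which gives the first part of (\ref{lem:apr1a1}).

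\emph{Convergence in $C^{h,\beta}$.} Uniform continuity of $D^\eta G$ gives $\sup|D^\eta(\rho_j*G)-D^\eta G|\le |D^\eta G|_\alpha j^{-\alpha}\int\rho(z)|z|^\alpha dz\to0$ for all $|\eta|\le h$. Thus $g_j\to g$ in $C^h(\overline{\Omega})$. For the H\"older seminorm of order $\beta<\alpha$ of $w_j\equiv D^\eta g_j-D^\eta g$ with $|\eta|=h$, we use the elementary interpolation bound
\[
\frac{|w_j(x)-w_j(y)|}{|x-y|^\beta}\le \left(\frac{|w_j(x)-w_j(y)|}{|x-y|^\alpha}\right)^{\beta/\alpha}|w_j(x)-w_j(y)|^{1-\beta/\alpha}\le (2C)^{\beta/\alpha}(2\sup|w_j|)^{1-\beta/\alpha},
\]
where $C$ bounds $|D^\eta g_j|_\alpha+|D^\eta g|_\alpha$ uniformly in $j$ by the previous step. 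Therefore $|w_j|_\beta\to0$, and $g_j\to g$ in $C^{h,\beta}(\overline{\Omega})$ for every $\beta\in]0,\alpha[$.

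The main obstacle is the extension step: we need a bounded extension $C^{h,\alpha}(\overline{\Omega})\to C^{h,\alpha}({\mathbb{R}}^n)$ for domains that are only of class $C^{m,\alpha}$. I would take it from the literature cited (\cite{DaLaMu21}), noting that composition with $C^{m,\alpha}$ diffeomorphisms preserves $C^{h,\alpha}$ for $h\le m$. In the case $h=0$, one can avoid extension operators altogether and use the McShane-type extension $\tilde g(x)=\inf_{y\in\overline{\Omega}}\{\mathrm{Re}\,g(y)+|g|_\alpha|x-y|^\alpha\}$, applied to real and imaginary parts, which preserves the $\alpha$-H\"older constant.
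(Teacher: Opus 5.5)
Your proof is correct. The extension operator exists because a set of class $C^{m,\alpha}$ with $m\geq 1$ and $h\leq m$ is of class $C^{\max\{1,h\},\alpha}$; mollification then gives the uniform $C^{h,\alpha}$ bound and convergence in $C^{h}$, and your interpolation inequality upgrades this to convergence in $C^{h,\beta}(\overline{\Omega})$ for every $\beta\in]0,\alpha[$. The paper gives no proof of this lemma and only cites \cite[Lem.~A.3]{La24c}, so there is nothing in the paper to compare against; your extension--mollification--interpolation route is the standard argument for such a statement.
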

For a proof, we refer for example to \cite[Lem.~A.3]{La24c}.  Then we have the following certainly known approximation lemma 
for functions in Schauder spaces on the boundary.
\begin{lemma}\label{lem:aprbdma}
 Let $\alpha\in]0,1]$, $m\in {\mathbb{N}}$. 
 Let $\Omega$ be a bounded open subset of ${\mathbb{R}}^n$ of class $C^{\max\{1,m\},\alpha}$. If $f\in C^{m,\alpha}(\partial\Omega)$, then there exists a sequence $\{f_j\}_{j\in {\mathbb{N}}}$ in 
 $C^{\max\{1,m\},\alpha}(\partial\Omega)$ such that
 \begin{equation}\label{lem:aprbdma1}
 \sup_{j\in {\mathbb{N}}}\|f_j\|_{
 C^{m,\alpha}(\partial\Omega) 
 }<+\infty\,,\qquad
 \lim_{j\to\infty}f_j=f\quad\text{in}\ C^{m,\beta}(\partial\Omega)\quad \forall\beta\in]0,\alpha[\,.
 \end{equation}
\end{lemma}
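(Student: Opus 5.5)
The plan is to reduce the statement to Lemma \ref{lem:apr1a} by extending $f$ to the interior of a neighbourhood and then restricting back to $\partial\Omega$. Let $m' \equiv \max\{1,m\}$. Since $\Omega$ is of class $C^{m',\alpha}$, I will use a standard extension operator (cf.~\cite{DaLaMu21}) that maps $C^{m,\alpha}(\partial\Omega)$ linearly and continuously into $C^{m,\alpha}(\overline{\Omega})$ and whose composition with the trace is the identity. For $m\le m'$ such an operator can be built with local $C^{m',\alpha}$ charts flattening the boundary, a partition of unity, and extension of each localized piece, constant in the normal variable, into a collar. Call it $\mathcal{E}$, and set $g\equiv \mathcal{E}[f]\in C^{m,\alpha}(\overline{\Omega})$.

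Next I apply Lemma \ref{lem:apr1a} with $h=m$ and with $m'$ in place of $m$; this is admissible because $m\le m'$ and $\Omega$ is of class $C^{m',\alpha}$. It gives a sequence $\{g_j\}$ in $C^\infty(\overline{\Omega})$ that is bounded in $C^{m,\alpha}(\overline{\Omega})$ and converges to $g$ in $C^{m,\beta}(\overline{\Omega})$ for every $\beta\in]0,\alpha[$. I then define $f_j\equiv g_{j|\partial\Omega}$.

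To conclude I use that the restriction (trace) operator from $C^{k,\gamma}(\overline{\Omega})$ to $C^{k,\gamma}(\partial\Omega)$ is linear and continuous whenever $k\le m'$ and $\Omega$ is of class $C^{m',\alpha}$ with $\gamma\le\alpha$ (cf.~\cite[\S 2.20]{DaLaMu21}). Applying this with $(k,\gamma)=(m,\alpha)$ gives the uniform bound $\sup_j\|f_j\|_{C^{m,\alpha}(\partial\Omega)}<+\infty$. Applying it with $(k,\gamma)=(m,\beta)$ gives $f_j\to g_{|\partial\Omega}=f$ in $C^{m,\beta}(\partial\Omega)$. It remains to show that each $f_j$ lies in $C^{m',\alpha}(\partial\Omega)$. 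This holds because $g_j$ is smooth on $\overline{\Omega}$, hence belongs to $C^{m',\alpha}(\overline{\Omega})$, and the trace of such a function on a $C^{m',\alpha}$ boundary lies in $C^{m',\alpha}(\partial\Omega)$, since it is the composition of a smooth function with $C^{m',\alpha}$ parametrizations.

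The main obstacle is the extension operator $\mathcal{E}$, and in particular checking that it preserves the $C^{m,\alpha}$ regularity when the boundary is only of class $C^{m',\alpha}$. For $m\ge 1$ the charts have exactly the regularity needed. For $m=0$ the charts are $C^{1,\alpha}$, which is more than enough. If that construction turns out to be delicate, the fallback is to work directly on $\partial\Omega$: in each chart, mollify the localized function $f$ composed with the parametrization. The standard mollifier estimates give a uniform $C^{m,\alpha}$ bound, and convergence in $C^{m,\beta}$ follows by the interpolation inequality $|u|_{\beta}\le C\|u\|_\infty^{1-\beta/\alpha}|u|_\alpha^{\beta/\alpha}$. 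Composing back with the inverse chart yields $C^{m',\alpha}$ functions, because the charts are $C^{m',\alpha}$.
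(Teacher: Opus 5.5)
Your proposal is correct and follows the paper's proof: extend $f$ to a function in $C^{m,\alpha}(\overline{\Omega})$ (the paper cites \cite[2.72]{DaLaMu21}), approximate it via Lemma \ref{lem:apr1a}, and restrict back to $\partial\Omega$ using the continuity of the restriction operator for the exponents $\alpha$ and $\beta$. The only difference is the case $m=0$: the paper simply cites \cite[Lem.~A.25]{La24b}, whereas you handle it with the same argument by applying Lemma \ref{lem:apr1a} with $1$ in place of $m$ and $h=0$, and by noting that restrictions of smooth functions lie in $C^{1,\alpha}(\partial\Omega)$; this is valid, because a $C^{0,\alpha}(\overline{\Omega})$ extension of $f$ always exists (e.g.\ McShane's formula applied to the real and imaginary parts).
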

{\bf Proof.} For case $m=0$, we refer for example to \cite[Lem.~A.25]{La24b}. Now let $m\geq 1$. 
Since $\Omega$ is of class $C^{m,\alpha}$ and $f\in C^{m,\alpha}(\partial\Omega)$, there exists
$\tilde{f}\in C^{m,\alpha}(\overline{\Omega})$ such that $\tilde{f}_{|\partial\Omega}=f$ (cf. \textit{e.g.}, \cite[2.72]{DaLaMu21}). Then Lemma \ref{lem:apr1a} implies the existence of a  sequence $\{\tilde{f}_j\}_{j\in {\mathbb{N}}}$ in 
 $C^{\infty}(\overline{\Omega})\subseteq C^{m,\alpha}(\overline{\Omega})$ such that
\begin{equation}\label{lem:aprbdma2}
 \sup_{j\in {\mathbb{N}}}\|\tilde{f}_j\|_{
 C^{m,\alpha}(\overline{\Omega}) 
 }<+\infty\,,\qquad
 \lim_{j\to\infty}\tilde{f}_j=\tilde{f}\quad\text{in}\ C^{m,\beta}(\overline{\Omega})\quad \forall\beta\in]0,\alpha[\,.
 \end{equation}
Then the continuity of the restriction operator
from $C^{m,\gamma}(\overline{\Omega})$ to $C^{m,\gamma}(\partial\Omega)$ for $\gamma=\alpha$ and $\gamma=\beta$ imply the validity of (\ref{lem:aprbdma1}) with $f_j\equiv \tilde{f}_{j|\partial\Omega}$ for all $j\in {\mathbb{N}}$.\hfill  $\Box$ 

\vspace{\baselineskip}

Next we introduce the  following Lemma that is  well known and is an immediate consequence of the H\"{o}lder inequality.
\begin{lemma}\label{lem:caincl}
 Let $m\in {\mathbb{N}}$, $\alpha\in ]0,1[$. Let $\Omega$ be a bounded open  subset of ${\mathbb{R}}^{n}$ of class $C^{\max\{m,1\},\alpha}$.  Then the canonical inclusion  ${\mathcal{J}}$ from the Lebesgue space $L^1(\partial\Omega)$ of integrable functions in $\partial\Omega$ to $(C^{m,\alpha}(\partial\Omega))'$ that takes $\mu$ to the functional ${\mathcal{J}}[\mu]$ defined by 
 \begin{equation}\label{lem:caincl1}
\langle {\mathcal{J}}[\mu],v\rangle \equiv \int_{\partial\Omega}\mu v\,d\sigma\qquad\forall v\in C^{
 m,
\alpha}(\partial\Omega)\,,
\end{equation}
is linear continuous and injective.
\end{lemma}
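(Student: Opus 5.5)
Linearity of ${\mathcal{J}}$ is immediate from the linearity of the integral. For well-definedness and continuity, fix $\mu\in L^1(\partial\Omega)$ and $v\in C^{m,\alpha}(\partial\Omega)$. Since $\partial\Omega$ is compact and $v$ is continuous, we have
\[
|\langle {\mathcal{J}}[\mu],v\rangle|\leq \int_{\partial\Omega}|\mu||v|\,d\sigma\leq \|\mu\|_{L^1(\partial\Omega)}\sup_{\partial\Omega}|v|\leq \|\mu\|_{L^1(\partial\Omega)}\|v\|_{C^{m,\alpha}(\partial\Omega)}\,.
\]
This is the H\"older inequality with exponents $1$ and $\infty$, combined with the fact that the sup norm is dominated by the Schauder norm. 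So ${\mathcal{J}}[\mu]$ belongs to $(C^{m,\alpha}(\partial\Omega))'$, and $\|{\mathcal{J}}[\mu]\|\leq\|\mu\|_{L^1(\partial\Omega)}$. Hence ${\mathcal{J}}$ is continuous.

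Injectivity is the only substantive step. Suppose $\int_{\partial\Omega}\mu v\,d\sigma=0$ for all $v\in C^{m,\alpha}(\partial\Omega)$. I would show that $C^{m,\alpha}(\partial\Omega)$ is dense in $C^0(\partial\Omega)$ with respect to the sup norm. To do this, take $w\in C^0(\partial\Omega)$ and extend it to a function $\tilde w\in C^0_c({\mathbb{R}}^n)$ by the Tietze extension theorem. Mollify $\tilde w$ to obtain $\tilde w_\epsilon\in C^\infty_c({\mathbb{R}}^n)$ converging uniformly to $\tilde w$. Since $\partial\Omega$ is of class $C^{\max\{m,1\},\alpha}$, the restrictions $\tilde w_{\epsilon|\partial\Omega}$ belong to $C^{m,\alpha}(\partial\Omega)$, and they converge uniformly to $w$. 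By the $L^1$ estimate above, dominated convergence yields $\int_{\partial\Omega}\mu w\,d\sigma=0$ for all $w\in C^0(\partial\Omega)$.

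It then follows that the finite complex Borel measure $\mu\,d\sigma$ annihilates $C^0(\partial\Omega)$, so it is zero by the uniqueness part of the Riesz representation theorem. Alternatively, one can approximate indicator functions of compact subsets by continuous functions, using the regularity of $\sigma$. Hence $\mu=0$ $\sigma$-a.e., and ${\mathcal{J}}$ is injective. The only point requiring any care is the density argument, namely checking that restrictions of smooth functions to a $C^{\max\{m,1\},\alpha}$ boundary lie in $C^{m,\alpha}(\partial\Omega)$. This is standard for composition with the local $C^{m,\alpha}$ parametrizations of $\partial\Omega$ (cf.~\cite{DaLaMu21}).
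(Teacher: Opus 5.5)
Your proof is correct and follows the paper's route: the paper gives no argument beyond calling the lemma well known and an immediate consequence of the H\"older inequality, which is exactly your $L^1$--$L^\infty$ bound $|\langle {\mathcal{J}}[\mu],v\rangle|\leq\|\mu\|_{L^1(\partial\Omega)}\sup_{\partial\Omega}|v|$ (up to a harmless constant depending on how the norm of $C^{m,\alpha}(\partial\Omega)$ is normalized). The paper leaves injectivity implicit, and you supply it correctly by combining the density of $C^{m,\alpha}(\partial\Omega)$ in $C^0(\partial\Omega)$ with the uniqueness part of the Riesz representation theorem.
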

As customary, we say  that ${\mathcal{J}}[\mu]$ is the `distribution' that is canonically associated to $\mu$ and we omit the indication of the inclusion map ${\mathcal{J}}$. By Lemma \ref{lem:caincl}, the space $C^{0,\alpha}(\partial\Omega)$ is continuously embedded into $(C^{m,\alpha}(\partial\Omega))'$.

\section{Distributional normal derivatives on the boundary}\label{sec:dndbdry}

Next we plan to introduce the normal derivative of the functions in  $C^{0,\alpha}(\overline{\Omega})_\Delta$ as in \cite{La24c}. To do so,    we introduce the (classical) interior Steklov-Poincar\'{e} operator (also known as interior  Dirichlet-to-Neumann map).
\begin{definition}\label{defn:cinspo}
 Let $\alpha\in]0,1[$.  Let  $\Omega$ be a  bounded open subset of ${\mathbb{R}}^{n}$ of class $C^{1,\alpha}$. The classical interior Steklov-Poincar\'{e} operator is defined to be the operator $S_{\Omega,+}$ from
 \begin{equation}\label{defn:cinspo1}
C^{1,\alpha}(\partial\Omega)\quad\text{to}\quad C^{0,\alpha}(\partial\Omega)
\end{equation}
that takes $v\in C^{1,\alpha}(\partial\Omega)$ to the function 
 \begin{equation}\label{defn:cinspo2}
S_{\Omega,+}[v](x)\equiv \frac{\partial  }{\partial\nu}{\mathcal{G}}_{\Omega,d,+}[v](x)\qquad\forall x\in\partial\Omega\,,
\end{equation}
where ${\mathcal{G}}_{d,+}[v]$ is the only solution $v^\sharp\in C^{1,\alpha}(\overline{\Omega})$  of the Dirichlet problem
\[
\left\{
\begin{array}{ll}
 \Delta v^\sharp=0 & \text{in}\ \Omega\,,
 \\
v^\sharp_{|\partial\Omega} =v& \text{on}\ \partial\Omega \,.
\end{array}
\right.
\]

 \end{definition}
   Since   
    ${\mathcal{G}}_{\Omega,d,+}$ is linear and continuous from $C^{1,\alpha}(\partial\Omega)$ to 
   $C^{1,\alpha}(\overline{\Omega})$ (cf.~\textit{e.g.}, \cite[Thm.~4.8]{La24b}) and the classical normal derivative is continuous from $C^{1,\alpha}(\overline{\Omega})$ to $C^{0,\alpha}(\partial\Omega)$, then $S_{\Omega,+}[\cdot]$ is linear and continuous from 
  $C^{1,\alpha}(\partial\Omega)$ to $C^{0,\alpha}(\partial\Omega)$. Then we have the following definition of \cite[(41)]{La24c}.
  \begin{definition}\label{defn:conoderdedu}
 Let $\alpha\in]0,1[$.  Let  $\Omega$ be a  bounded open subset of ${\mathbb{R}}^{n}$ of class $C^{1,\alpha}$. If  $u\in C^{0}(\overline{\Omega})$ and $\Delta u\in  C^{-1,\alpha}(\overline{\Omega})$, then we define the distributional  normal derivative $\partial_{\nu_\Omega} u$
 of $u$ to be the only element of the dual $(C^{1,\alpha}(\partial\Omega))'$ that satisfies the following equality
 \begin{equation}\label{defn:conoderdedu1}
\langle \partial_{\nu_\Omega} u ,v\rangle \equiv\int_{\partial\Omega}uS_{\Omega,+}[v]\,d\sigma
+\langle E^\sharp_\Omega[\Delta u],{\mathcal{G}}_{\Omega,d,+}[v]\rangle 
\qquad\forall v\in C^{1,\alpha}(\partial\Omega)\,.
\end{equation}
\end{definition}
 The normal derivative of Definition \ref{defn:conoderdedu} extends the classical one in the sense that if $u\in C^{1,\alpha}(\overline{\Omega})$, then under the assumptions on $\alpha$ and $\Omega$  of Definition \ref{defn:conoderdedu}, we have 
 \begin{equation}\label{lem:conoderdeducl1}
 \langle \partial_{\nu_\Omega} u ,v\rangle =\int_{\partial\Omega}\frac{\partial u}{\partial\nu_\Omega}v\,d\sigma
  \quad\forall v\in C^{1,\alpha}(\partial\Omega)\,,
\end{equation}
where $\frac{\partial u}{\partial\nu_\Omega}$ in the right hand side denotes the classical normal derivative of $u$ on $\partial\Omega$ (cf.~\cite[Lem.~5.5]{La24c}). In the sequel, we use the classical symbol $\frac{\partial u}{\partial\nu_\Omega}$ also for  $\partial_{\nu_\Omega} u$ when no ambiguity can arise.\par  
  
Next we introduce the  function space $V^{-1,\alpha}(\partial\Omega)$ on the boundary of $\Omega$ for the normal derivatives of the functions of $C^{0,\alpha}(\overline{\Omega})_\Delta$ as in  \cite[Defn.~13.2, 15.10, Thm.~18.1]{La24b}.
\begin{definition}\label{defn:v-1a}
Let   $\alpha\in ]0,1[$. Let $\Omega$ be a bounded open  subset of ${\mathbb{R}}^{n}$ of class $C^{1,\alpha}$. Let 
\begin{eqnarray}\label{defn:v-1a1}
 \lefteqn{V^{-1,\alpha}(\partial\Omega)\equiv \biggl\{\mu_0+S_{\Omega,+}^t[\mu_1]:\,\mu_0, \mu_1\in C^{0,\alpha}(\partial\Omega)
\biggr\}\,,
}
\\ \nonumber
\lefteqn{
\|\tau\|_{  V^{-1,\alpha}(\partial\Omega) }
\equiv\inf\biggl\{\biggr.
 \|\mu_0\|_{ C^{0,\alpha}(\partial\Omega)  }+\|\mu_1\|_{ C^{0,\alpha}(\partial\Omega)  }
:\,
 \tau=\mu_0+S_{\Omega,+}^t[\mu_1]\biggl.\biggr\}\,,
 }
 \\ \nonumber
 &&\qquad\qquad\qquad\qquad\qquad\qquad\qquad\qquad\qquad
 \forall \tau\in  V^{-1,\alpha}(\partial\Omega)\,,
\end{eqnarray}
where $S_{\Omega,+}^t$ is the transpose map of $S_{\Omega,+}$.
\end{definition}
As shown in \cite[\S 13]{La24b},  $(V^{-1,\alpha}(\partial\Omega), \|\cdot\|_{  V^{-1,\alpha}(\partial\Omega)  })$ is a Banach space. By definition of the norm, $C^{0,\alpha}(\partial\Omega)$ is continuously embedded into $V^{-1,\alpha}(\partial\Omega)$. Moreover, we have the following statement  of \cite[Prop.~6.6]{La24c}  on the continuity of the normal derivative on $C^{0,\alpha}(\overline{\Omega})_\Delta$. 
\begin{proposition}\label{prop:ricodnu}
 Let   $\alpha\in ]0,1[$. Let $\Omega$ be a bounded open  subset of 
 ${\mathbb{R}}^{n}$ of class $C^{1,\alpha}$. Then the distributional normal derivative   $\partial_{\nu_\Omega}$ is a continuous surjection of $C^{0,\alpha}(\overline{\Omega})_\Delta$ onto $V^{-1,\alpha}(\partial\Omega)$ and there exists a linear and continuous map $Z$ from $V^{-1,\alpha}(\partial\Omega)$ to $C^{0,\alpha}(\overline{\Omega})_\Delta $ such that
 \begin{equation}\label{prop:ricodnu1}
\partial_{\nu_\Omega} Z[g]=g\qquad\forall g\in V^{-1,\alpha}(\partial\Omega)\,,
\end{equation}
\textit{i.e.}, $Z$ is a right inverse of   $\partial_{\nu_\Omega}$.   
\end{proposition}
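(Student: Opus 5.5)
The plan is to prove the three claims in turn: continuity of $\partial_{\nu_\Omega}$, the fact that it lands in $V^{-1,\alpha}(\partial\Omega)$, and the existence of the right inverse $Z$, which also gives surjectivity.

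\emph{Continuity into the dual.} Definition \ref{defn:conoderdedu} shows that, for fixed $v\in C^{1,\alpha}(\partial\Omega)$,
\[
|\langle \partial_{\nu_\Omega}u,v\rangle|\le \|u\|_{C^0(\partial\Omega)}\|S_{\Omega,+}[v]\|_{L^1(\partial\Omega)}+\|E^\sharp_\Omega\|\,\|\Delta u\|_{C^{-1,\alpha}(\overline{\Omega})}\|{\mathcal{G}}_{\Omega,d,+}[v]\|_{C^{1,\alpha}(\overline{\Omega})}.
\]
The continuity of $S_{\Omega,+}$, of ${\mathcal{G}}_{\Omega,d,+}$ and of $E^\sharp_\Omega$ (Proposition \ref{prop:nschext}) then makes $\partial_{\nu_\Omega}$ continuous from $C^{0,\alpha}(\overline{\Omega})_\Delta$ to $(C^{1,\alpha}(\partial\Omega))'$.

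\emph{The image lies in $V^{-1,\alpha}(\partial\Omega)$, with continuity.} The first term of (\ref{defn:conoderdedu1}) is exactly $S^t_{\Omega,+}[u_{|\partial\Omega}]$, and $\mu_1=u_{|\partial\Omega}\in C^{0,\alpha}(\partial\Omega)$ depends continuously on $u$. For the second term I write $\Delta u=f_0+\sum_j\partial_jf_j$ with $f_j\in C^{0,\alpha}(\overline{\Omega})$, with norms nearly realizing the infimum, and expand using (\ref{prop:nschext2}):
\[
\langle E^\sharp[\Delta u],{\mathcal{G}}[v]\rangle=\int_\Omega f_0{\mathcal{G}}[v]\,dx+\int_{\partial\Omega}\sum_j\nu_jf_j\,v\,d\sigma-\sum_j\int_\Omega f_j\,\partial_j{\mathcal{G}}[v]\,dx .
\]
The boundary term is given by the density $\nu\cdot f$. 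This density is $C^{0,\alpha}$ only if $\nu$ is $C^{0,\alpha}$, which holds because $\Omega$ is of class $C^{1,\alpha}$. The two volume terms have the form $v\mapsto\int_\Omega F\cdot\nabla{\mathcal{G}}[v]+f_0{\mathcal{G}}[v]\,dx$ with H\"older data $F,f_0$.

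The key step, and the main obstacle, is to show that every functional of this volume type equals $\mu_0+S^t_{\Omega,+}[\mu_1]$ with $\mu_0,\mu_1$ in $C^{0,\alpha}(\partial\Omega)$ controlled by $\|F\|+\|f_0\|$. I would first try to solve $\Delta w=f_0-{\mathrm{div}}F$ in $\Omega$ with $w=0$ on $\partial\Omega$, $w\in C^{1,\alpha}(\overline{\Omega})$. Schauder theory for divergence-form data with $C^{0,\alpha}$ coefficients gives this, for instance through the volume potential of an extension of $F$ and $f_0$ followed by a Dirichlet correction. Green's identity then turns the volume term into $\int_{\partial\Omega}(\partial_\nu w-\nu\cdot F)v\,d\sigma$, since ${\mathcal{G}}[v]$ is harmonic. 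That is a $\mu_0$-type term in $C^{0,\alpha}(\partial\Omega)$, and it depends continuously on the data. The identity should be checked first for smooth data and then extended by the approximation Lemmas \ref{lem:apr1a} and \ref{lem:aprbdma}, passing to $\beta<\alpha$ and using uniform bounds.

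\emph{Right inverse.} Given $g=\mu_0+S^t_{\Omega,+}[\mu_1]$, I set $Z[g]=u_1+u_0$. Here $u_1$ is the harmonic function with Dirichlet datum $\mu_1$, lying in $C^{0,\alpha}(\overline{\Omega})$, and $u_0\in C^{1,\alpha}(\overline{\Omega})$ is harmonic with classical Neumann datum $\mu_0-\partial_{\nu}u_1$ corrected for compatibility. For harmonic $u_1$, (\ref{defn:conoderdedu1}) gives $\partial_\nu u_1=S^t[\mu_1]$, so no correction would be needed for that piece. The compatibility defect, with one constraint per connected component, is best absorbed by adding a fixed smooth function with $\Delta$ a constant on each component. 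Choosing $\mu_0,\mu_1$ linearly and continuously requires a continuous selection from the quotient norm. I would obtain it by noting that the map $(\mu_0,\mu_1)\mapsto g$ is a continuous surjection between Banach spaces, whose kernel is closed. If that is unavailable, I would instead define $Z$ via layer potentials, as in \cite{La24b}.
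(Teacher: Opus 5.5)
The paper gives no proof of this proposition; it imports it from \cite[Prop.~6.6]{La24c}, so your argument has to stand on its own. Your continuity and membership steps are sound. Only a norm bound on $\|\partial_{\nu_\Omega}u\|_{V^{-1,\alpha}(\partial\Omega)}$ is needed, so a nonlinear, near-optimal choice of $(f_0,F)$ is harmless. Recheck the sign of $\nu_\Omega\cdot F$ in your Green identity: with $\Delta w=f_0-{\mathrm{div}}\,F$ it should be $\partial_\nu w+\nu_\Omega\cdot F$. A simpler and linear version is available: take $w\equiv{\mathcal{P}}^+_\Omega[S_n,E^\sharp_\Omega[\Delta u]]-{\mathcal{G}}_{\Omega,d,+}[{\mathcal{P}}^+_\Omega[S_n,E^\sharp_\Omega[\Delta u]]_{|\partial\Omega}]$. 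Then Proposition \ref{prop:dvpsnecr-1a} and (\ref{lem:conoderdeducl1}) give directly $\partial_{\nu_\Omega}u=\frac{\partial w}{\partial\nu_\Omega}+S^t_{\Omega,+}[u_{|\partial\Omega}]$. Your construction $u_1+u_0$ also proves surjectivity, with two corrections. The Neumann datum of $u_0$ must be $\mu_0$, not ``$\mu_0-\partial_\nu u_1$'', which would cancel the $S^t_{\Omega,+}[\mu_1]$ part. And $\Delta u_0$ must be allowed to be locally constant, as you indicate.

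The genuine gap is the linearity and continuity of $Z$. You infer a bounded linear choice $g\mapsto(\mu_0,\mu_1)$ from the fact that $\Phi:(\mu_0,\mu_1)\mapsto\mu_0+S^t_{\Omega,+}[\mu_1]$ is a continuous surjection with closed kernel. That inference is false in general. A surjection between Banach spaces has a bounded linear right inverse if and only if its kernel is complemented; Bartle--Graves only gives a continuous nonlinear selection. Here ${\mathrm{Ker}}\,\Phi$ is infinite dimensional: by the symmetry of $S_{\Omega,+}$ it contains $(-S_{\Omega,+}[\mu_1],\mu_1)$ for every $\mu_1\in C^{1,\alpha}(\partial\Omega)$. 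It sits in a nonseparable space, and your argument produces no complement.

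A fix is to apply the Open Mapping Theorem to $\partial_{\nu_\Omega}$ itself, exactly as in Proposition \ref{prop:gintneuposchma}. Let $X'$ be the closed subspace of those $u\in C^{0,\alpha}(\overline{\Omega})_\Delta$ with $\Delta u\in{\mathrm{span}}\{\chi_{\Omega_1},\dots,\chi_{\Omega_{\varkappa^+}}\}$ and $\int_{\Omega_j}u\,dx=0$ for all $j$.

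\emph{Surjectivity.} Take your construction and subtract the locally constant function equal to the mean of $u$ on each $\Omega_j$; that function has zero normal derivative. This shows that $\partial_{\nu_\Omega}$ maps $X'$ onto $V^{-1,\alpha}(\partial\Omega)$, and the choice of $(\mu_0,\mu_1)$ no longer needs to be linear.

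\emph{Injectivity.} Let $\Delta u=\sum_kc_k\chi_{\Omega_k}$ and $\partial_{\nu_\Omega}u=0$. Test with $v=\chi_{\partial\Omega_j}$, for which ${\mathcal{G}}_{\Omega,d,+}[v]=\chi_{\Omega_j}$ and $S_{\Omega,+}[v]=0$; this gives $c_jm_n(\Omega_j)=0$. Hence $\Delta u=0$, and $u_{|\partial\Omega}$ annihilates ${\mathrm{Im}}\,S_{\Omega,+}$. By Proposition \ref{prop:gintneuposchma} with $m=1$, that range is the set of $C^{0,\alpha}$ functions with zero mean on each $\partial\Omega_j$. So $u_{|\partial\Omega}$ is constant on each $\partial\Omega_j$, the harmonic function $u$ is constant on each $\Omega_j$, and the normalization forces $u=0$.

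The inverse of the restriction of $\partial_{\nu_\Omega}$ to $X'$ is then bounded by the Open Mapping Theorem, and it is the required $Z$.
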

In order to introduce a condition that is equivalent to that of Definition \ref{defn:conoderdedu}, we   introduce the following  sub-space of  $C^{1,\alpha}(\overline{\Omega})$.  
\begin{definition}\label{defn:c1ade}
 Let   $\alpha\in ]0,1[$. Let $\Omega$ be a bounded open  subset of ${\mathbb{R}}^{n}$ of class $C^{1,\alpha}$. Let
 \begin{eqnarray}\label{defn:c1ade1}
C^{1,\alpha}(\overline{\Omega})_\Delta
&\equiv&\biggl\{u\in C^{1,\alpha}(\overline{\Omega}):\,\Delta u\in C^{0,\alpha}(\overline{\Omega})\biggr\}\,,
\\ \nonumber
\|u\|_{ C^{1,\alpha}(\overline{\Omega})_\Delta }
&\equiv& \|u\|_{ C^{1,\alpha}(\overline{\Omega})}
+\|\Delta u\|_{C^{0,\alpha}(\overline{\Omega})}
\qquad\forall u\in C^{1,\alpha}(\overline{\Omega})_\Delta\,.
\end{eqnarray}
\end{definition}
Then    $\left(\|u\|_{ C^{1,\alpha}(\overline{\Omega})_\Delta }, \|\cdot \|_{ C^{1,\alpha}(\overline{\Omega})_\Delta }\right)$ is a Banach space (cf.~\cite[\S 5.1]{La24c}). Then we have the following statement of   \cite[Prop.~5.15]{La24c}.
\begin{proposition}\label{prop:node1adeq}
 Let   $\alpha\in ]0,1[$. Let $\Omega$ be a bounded open  subset of 
 ${\mathbb{R}}^{n}$ of class $C^{1,\alpha}$. Let $E_\Omega$ be a linear map from $C^{1,\alpha}(\partial\Omega)$ to $ C^{1,\alpha}(\overline{\Omega})_\Delta$ such that
 \begin{equation}\label{prop:node1adeq0}
 E_\Omega[f]_{|\partial\Omega}=f\qquad\forall f\in C^{1,\alpha}(\partial\Omega)\,.
 \end{equation}
 If $u\in C^{0,\alpha}(\overline{\Omega})_\Delta$, then the distributional  normal derivative $\partial_{\nu_\Omega} u$
 of $u$  is characterized by the validity of the following equality
 \begin{eqnarray}\label{prop:node1adeq1}
 \lefteqn{
\langle \partial_{\nu_\Omega} u ,v\rangle =\int_{\partial\Omega}u
\frac{\partial}{\partial\nu_\Omega}E_\Omega[v]
\,d\sigma
}
\\ \nonumber
&&\qquad\qquad
+\langle E^\sharp_\Omega[\Delta u],E_\Omega[v]\rangle -\int_\Omega\Delta (E_\Omega[v]) u\,dx
\qquad\forall v\in C^{1,\alpha}(\partial\Omega)\,.
\end{eqnarray}
\end{proposition}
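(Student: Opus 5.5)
Since the right-hand side of (\ref{prop:node1adeq1}) is linear in $v$ and (as we shall check) continuous on $C^{1,\alpha}(\partial\Omega)$, it suffices to show that it coincides with the right-hand side of (\ref{defn:conoderdedu1}) for every $v\in C^{1,\alpha}(\partial\Omega)$. Fix such a $v$ and set
\[
w\equiv E_\Omega[v]-{\mathcal{G}}_{\Omega,d,+}[v]\,.
\]
Then $w\in C^{1,\alpha}(\overline{\Omega})_\Delta$, $w_{|\partial\Omega}=0$ and $\Delta w=\Delta E_\Omega[v]\in C^{0,\alpha}(\overline{\Omega})$. Subtracting (\ref{defn:conoderdedu1}) from the right-hand side of (\ref{prop:node1adeq1}), and using $\frac{\partial}{\partial\nu_\Omega}E_\Omega[v]-S_{\Omega,+}[v]=\frac{\partial w}{\partial\nu_\Omega}$, the claim reduces to the Green-type identity
\begin{equation*}
\int_{\partial\Omega}u\frac{\partial w}{\partial\nu_\Omega}\,d\sigma+\langle E^\sharp_\Omega[\Delta u],w\rangle-\int_\Omega \Delta w\, u\,dx=0
\end{equation*}
for all $u\in C^{0,\alpha}(\overline{\Omega})_\Delta$ and all $w\in C^{1,\alpha}(\overline{\Omega})_\Delta$ with $w_{|\partial\Omega}=0$.

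To prove this identity, write $\Delta u=f_0+\sum_{j}\partial_{x_j}f_j$ with $f_j\in C^{0,\alpha}(\overline{\Omega})$. By (\ref{prop:nschext2}), since $w=0$ on $\partial\Omega$,
\[
\langle E^\sharp_\Omega[\Delta u],w\rangle=\int_\Omega f_0w\,dx-\sum_j\int_\Omega f_j\partial_{x_j}w\,dx\,.
\]
This expression involves only $\nabla w$ and $w$, so it extends to test functions $w\in C^{1}(\overline{\Omega})$ vanishing on $\partial\Omega$. The strategy is then to approximate $u$ by smooth functions.

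First I would handle the case $u\in C^{2}(\overline{\Omega})$ (or $u\in C^{1,\alpha}(\overline{\Omega})_\Delta$). In that case the identity is the classical second Green identity, $\int_\Omega(w\Delta u-u\Delta w)\,dx=\int_{\partial\Omega}(w\partial_\nu u-u\partial_\nu w)\,d\sigma$, with $w=0$ on $\partial\Omega$; it is valid for $w\in C^{1,\alpha}$ with $\Delta w\in C^{0,\alpha}$ via a standard approximation of $\Omega$ or of $w$. For general $u$ one cannot simply approximate $u$ in $C^{0,\alpha}(\overline{\Omega})_\Delta$ by smooth functions, because the decomposition of $\Delta u$ is not unique. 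The main obstacle is therefore the density of such smooth functions in $C^{0,\alpha}(\overline{\Omega})_\Delta$ with respect to a topology in which every term of the identity is continuous.

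To overcome this, I would decompose $u=u_1+u_2$. Here $u_1\in C^{1,\alpha}(\overline{\Omega})$ solves $\Delta u_1=\Delta u$ (for instance via volume potentials of the $f_j$, i.e.\ $u_1=P[f_0]+\sum_j\partial_{x_j}P[f_j]$ with $P$ a Newtonian potential, which lies in $C^{1,\alpha}$ by Schauder theory). Then $u_2=u-u_1$ is harmonic and $\alpha$-H\"older continuous. For $u_1$ one applies the previous step, approximating the $f_j$ by smooth functions through Lemma \ref{lem:apr1a}, which gives convergence in $C^{0,\beta}$ and hence convergence of every term. For the harmonic part $u_2$ the identity reads
\[
\int_{\partial\Omega}u_2\,\partial_\nu w\,d\sigma=\int_\Omega u_2\,\Delta w\,dx\,.
\]
I would obtain it by approximating $u_2$ by the harmonic functions ${\mathcal{G}}_{\Omega,d,+}[g_j]$, with $g_j$ smooth approximations of $u_{2|\partial\Omega}$ given by Lemma \ref{lem:aprbdma}. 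The convergence ${\mathcal{G}}_{\Omega,d,+}[g_j]\to u_2$ in $C^{0,\beta}(\overline{\Omega})$ follows from the continuity of the Dirichlet solution operator on $C^{0,\beta}$. Passing to the limit in the classical identity then concludes the proof, and also gives the continuity in $v$ mentioned above.
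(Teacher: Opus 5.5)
Your argument is correct. The paper does not prove this proposition itself; it quotes it from \cite[Prop.~5.15]{La24c}, so there is no in-paper argument to compare with. The reduction through $w\equiv E_\Omega[v]-{\mathcal{G}}_{\Omega,d,+}[v]$ to a Green-type identity for $w\in C^{1,\alpha}(\overline{\Omega})_\Delta$ with $w_{|\partial\Omega}=0$ is exactly right. The key idea is the splitting $u=u_1+u_2$, with $u_1\in C^{1,\alpha}(\overline{\Omega})$, $\Delta u_1=\Delta u$ and $u_2$ harmonic. It circumvents the fact that smooth functions are not norm-dense in $C^{0,\alpha}(\overline{\Omega})_\Delta$. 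That lack of density, and not the non-uniqueness of the representation $\Delta u=f_0+\sum_j\partial_{x_j}f_j$, is the real obstruction: $E^\sharp_\Omega$ is well defined on $C^{-1,\alpha}(\overline{\Omega})$, which is also why $E^\sharp_\Omega[\Delta u_1]=E^\sharp_\Omega[\Delta u]$.

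Two steps can be shortened using results already in the paper.

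First, you may take $u_1\equiv{\mathcal{P}}_\Omega^+[S_n,E^\sharp_\Omega[\Delta u]]$ (Proposition \ref{prop:dvpsnecr-1a}). Then you need neither an approximation of the $f_j$ nor the classical second Green identity (your ``standard approximation of $\Omega$ or of $w$''). Apply Proposition \ref{prop:gendidis} twice:
\begin{itemize}
\item with $A=\nabla w$ and test function $u_1$, using (\ref{prop:nschext3}) since $\Delta w\in C^{0,\alpha}(\overline{\Omega})$;
\item with $A=\nabla u_1$ and test function $w$, whose boundary term vanishes.
\end{itemize}
The difference of the two resulting equalities is precisely
\[
\int_{\partial\Omega}u_1\frac{\partial w}{\partial\nu_\Omega}\,d\sigma+\langle E^\sharp_\Omega[\Delta u_1],w\rangle-\int_\Omega u_1\Delta w\,dx=0\,.
\]

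Second, the same computation with $u_1$ replaced by ${\mathcal{G}}_{\Omega,d,+}[g_j]$ gives the identity for your harmonic approximants. To pass to the limit you only need uniform convergence, which the maximum principle gives directly:
\[
\sup_{\overline{\Omega}}|{\mathcal{G}}_{\Omega,d,+}[g_j]-u_2|\leq\sup_{\partial\Omega}|g_j-u_2|\,.
\]
So the $C^{0,\beta}$ theory of the Dirichlet problem is not needed. In fact the argument then works for any $u\in C^{0}(\overline{\Omega})$ with $\Delta u\in C^{-1,\alpha}(\overline{\Omega})$, the generality of Definition \ref{defn:conoderdedu}. For that you approximate $u_{2|\partial\Omega}$ uniformly, e.g.\ by restrictions of polynomials, instead of using Lemma \ref{lem:aprbdma}.
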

   We note that in equality (\ref{prop:node1adeq1}) we can use the `extension' operator $E_\Omega$ that we prefer and that accordingly equality (\ref{prop:node1adeq1}) is independent of the specific choice of $E_\Omega$. When we deal with problems for the Laplace operator, a good choice is
 $E_\Omega={\mathcal{G}}_{\Omega,d,+}$ (cf.~\textit{e.g.}, \cite[Thm.~4.8]{La24b}), so that the last term in the right hand side of (\ref{prop:node1adeq1}) disappears.  
 In particular, under the assumptions of Proposition \ref{prop:node1adeq}, an extension operator as $E_\Omega$ always exists.
 For the definition of   the normal derivative on the boundary the functions of $C^{0,\alpha}_{
{\mathrm{loc}}	}(\overline{\Omega^-})_\Delta$ with $\alpha\in]0,1[$ in case $\Omega$ is a bounded open subset of ${\mathbb{R}}^n$ of class $C^{1,\alpha}$, we refer to  \cite[\S 3]{La25}.

\section{Preliminaries on the  acoustic layer potentials}\label{sec:acsilah-1a}

Let  $\alpha\in]0,1]$. Let  $\Omega$ be a bounded open subset of ${\mathbb{R}}^{n}$ of class $C^{1,\alpha}$. Let  $r_{|\partial\Omega}$  be the restriction map  from ${\mathcal{D}}({\mathbb{R}}^n)$ to $C^{1,\alpha}(\partial\Omega)$. Let $\lambda\in {\mathbb{C}}$. If $S_{n,\lambda} $ is a fundamental solution 
of the operator $\Delta+\lambda$ and $\mu\in (C^{1,\alpha}(\partial\Omega))'$, then the  (distributional) single layer  potential relative to $S_{n,\lambda} $ and $\mu$ is the distribution
\[
v_\Omega[S_{n,\lambda} ,\mu]=(r_{|\partial\Omega}^t\mu)\ast S_{n,\lambda}  \in {\mathcal{D}}'({\mathbb{R}}^n) 
\]
 and we  set
\begin{eqnarray}\label{eq:dsila}
v_\Omega^+[S_{n,\lambda} ,\mu]&\equiv&\left((r_{|\partial\Omega}^t\mu)\ast S_{n,\lambda}  \right)_{|\Omega}
\qquad\text{in}\ \Omega\,,
\\ \nonumber
v_\Omega^-[S_{n,\lambda} ,\mu] &\equiv&
\left((r_{|\partial\Omega}^t\mu)\ast S_{n,\lambda}  \right)_{|\Omega^-}
\qquad\text{in}\ \Omega^-\,.
\end{eqnarray}
It is also known that the restriction of $v_\Omega[S_{n,\lambda} ,\mu]$ to ${\mathbb{R}}^n\setminus\partial\Omega$ equals the (distribution that is associated to) the function
\[
 \langle (r_{|\partial\Omega}^t\mu)(y),S_{n,\lambda} (\cdot-y)\rangle \,.
\]
In the cases in which both $v_\Omega^+[S_{n,\lambda} ,\mu]$ and $v_\Omega^-[S_{n,\lambda} ,\mu]$ admit a continuous extension to $\overline{\Omega}$ and to $\overline{\Omega^-}$, respectively, we still use the symbols $v_\Omega^+[S_{n,\lambda} ,\mu]$ and $v_\Omega^-[S_{n,\lambda} ,\mu]$ for the continuous extensions and if the values of $v_\Omega^\pm[S_{n,\lambda} ,\mu](x)$ coincide for each $x\in\partial\Omega$, then we set
\[
V_\Omega[S_{n,\lambda} ,\mu](x)\equiv v_\Omega^+[S_{n,\lambda} ,\mu](x)=v_\Omega^-[S_{n,\lambda} ,\mu]^-(x)\qquad\forall x\in\partial\Omega\,.
\]
If $\mu$ is continuous, then it is known that $v_\Omega[S_{n,\lambda} ,\mu ]$ is continuous in ${\mathbb{R}}^n$ (cf.~\textit{e.g.}, \cite[Lem.~4.2 (i), Lem.~6.2]{DoLa17}).   For the classical  properties of the acoustic single layer potential, we refer for example to  the paper \cite{DoLa17} of the author and Dondi and to \cite[\S 5]{La25a}. If $\lambda=0$, \textit{i.e.} if $\Delta+\lambda $ is the Laplace operator, then we usually  consider the fundamental solution $S_n$ of $\Delta$ that is delivered by the formula
 \[
S_{n}(\xi)\equiv
\left\{
\begin{array}{lll}
\frac{1}{s_{n}}\ln  |\xi| \qquad &   \forall \xi\in 
{\mathbb{R}}^{n}\setminus\{0\},\quad & {\mathrm{if}}\ n=2\,,
\\
\frac{1}{(2-n)s_{n}}|\xi|^{2-n}\qquad &   \forall \xi\in 
{\mathbb{R}}^{n}\setminus\{0\},\quad & {\mathrm{if}}\ n>2\,,
\end{array}
\right.
\]
where $s_{n}$ denotes the $(n-1)$ dimensional measure of 
$\partial{\mathbb{B}}_{n}(0,1)$.  Next we introduce the following (classical) technical statement on the  double layer potential (cf.~\textit{e.g.}, \cite[Thm.~5.1]{La25a}).
\begin{theorem}\label{thm:dlay}
 Let  $\alpha\in]0,1[$. Let $\Omega$ be a bounded open subset of  ${\mathbb{R}}^n$ of class $C^{1,\alpha}$. Let $\lambda\in {\mathbb{C}}$.  Let $S_{n,\lambda} $ be a fundamental solution 
of the operator $\Delta+\lambda$. If $\mu\in C^{0,\alpha}(\partial\Omega)$, and
\begin{equation}\label{thm:dlay1}
w_\Omega[S_{n,\lambda} ,\mu](x)\equiv\int_{\partial\Omega}\frac{\partial}{\partial\nu_{\Omega,y}}\left(S_{n,\lambda} (x-y)\right)\mu(y)\,d\sigma_y\qquad\forall x\in {\mathbb{R}}^n\,,
\end{equation}
where
\[
\frac{\partial}{\partial \nu_{\Omega,y} }
\left(S_{n,\lambda}(x-y)\right)\equiv
  -DS_{n,\lambda}(x-y) \nu_{\Omega}(y) \qquad\forall (x,y)\in\mathbb{R}^n\times\partial\Omega\,, x\neq y\,,
 \]
then the restriction 
$w_\Omega[S_{n,\lambda} ,\mu]_{|\Omega}$ can be extended uniquely to a function  
$w^{+}_\Omega [S_{n,\lambda} ,\mu]$ of class $ C^{0,\alpha}(\overline{\Omega})$ and $w_\Omega[S_{n,\lambda} ,\mu]_{|{\mathbb{R}}^n\setminus\overline{\Omega}}$ can be extended uniquely to a function  
$w^{-}_\Omega [S_{n,\lambda} ,\mu]$ of class  $
C^{0,\alpha}_{ {\mathrm{loc}} }(\overline{\Omega^{-}})$. Moreover, we have the following jump relation
\begin{equation}\label{thm:dlay1a}
w^{\pm}_\Omega [S_{n,\lambda},\mu](x)
=\pm\frac{1}{2}\mu(x)+w_\Omega[S_{n,\lambda},\mu](x)
\qquad\forall x\in\partial\Omega\,.
\end{equation}
 \end{theorem}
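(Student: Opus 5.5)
The plan is to reduce to the Laplace case, $\lambda=0$, and then treat the difference $S_{n,\lambda}-S_n$ as a perturbation.

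\emph{Step 1: splitting the kernel.} First I would recall the standard structure of a fundamental solution of $\Delta+\lambda$. One has
\[
S_{n,\lambda}=S_n+R_{n,\lambda}+Q_{n,\lambda}\,,
\]
up to an additive solution of $(\Delta+\lambda)u=0$ that is analytic in ${\mathbb{R}}^n$. Here the remainder $R_{n,\lambda}(\xi)$ is of the form $|\xi|^{2-n}$ (or $\ln|\xi|$) times an analytic function of $|\xi|^2$ vanishing at $0$. Its gradient is therefore $O(|\xi|^{3-n})$, with correspondingly milder second derivatives (up to logarithms). In particular $DR_{n,\lambda}$ is a weakly singular kernel of order one better than $DS_n$. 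It follows that
\[
\int_{\partial\Omega}-DR_{n,\lambda}(x-y)\nu_\Omega(y)\mu(y)\,d\sigma_y
\]
defines a function on ${\mathbb{R}}^n$ which is continuous and in fact $C^{0,\alpha}$, even Lipschitz-type, on compact sets. This follows from standard estimates for integral operators with weakly singular kernels on $C^{1,\alpha}$ (or even Lipschitz) boundaries. Such a term produces no jump, and it extends from $\Omega$ and from $\Omega^-$ to $C^{0,\alpha}(\overline{\Omega})$ and $C^{0,\alpha}_{\mathrm{loc}}(\overline{\Omega^-})$ respectively, with the same values on $\partial\Omega$. The analytic part contributes an analytic function.

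\emph{Step 2: the Laplace double layer.} For $S_n$ I would use the classical theory of the double layer potential with $C^{0,\alpha}$ densities on $C^{1,\alpha}$ boundaries. The decomposition
\[
w_\Omega[S_n,\mu](x)=\int_{\partial\Omega}\frac{\partial}{\partial\nu_{\Omega,y}}S_n(x-y)\bigl(\mu(y)-\mu(\bar{x})\bigr)\,d\sigma_y+\mu(\bar{x})\,w_\Omega[S_n,1](x),
\]
with $\bar{x}$ a nearest boundary point, handles the singular behaviour. The second term is computed by the Gauss identity: $w_\Omega[S_n,1]$ equals $1$, $1/2$ or $0$ (with the sign convention of the statement, up to sign) in $\Omega$, on $\partial\Omega$ and in $\Omega^-$. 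The first term is controlled by the bound
\[
|\nu_\Omega(y)\cdot(x-y)|\le C|x-y|^{1+\alpha}\quad\text{for } x,y\in\partial\Omega\,,
\]
which makes the kernel weakly singular on $\partial\Omega$, together with the Hölder continuity of $\mu$. Standard Hölder estimates of Giraud–Miranda type then give the $C^{0,\alpha}$ extensions, and the limits from the two sides yield the jump $\pm\frac12\mu$. Since this is the classical theorem in, e.g., \cite{DaLaMu21}, I would cite it rather than redo it.

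\emph{Main obstacle.} The delicate step is the uniform $\alpha$-Hölder estimate up to the boundary for the Laplace part, not merely continuity. Here I would rely on the known result for $C^{1,\alpha}$ domains. For the remainder I would need to check that its kernel estimates give Hölder continuity uniformly across $\partial\Omega$, and I would derive this from the bound
\[
|DR_{n,\lambda}(\xi)-DR_{n,\lambda}(\eta)|\le C|\xi-\eta|\bigl(|\xi|^{2-n}+|\eta|^{2-n}\bigr)\,,
\]
with a logarithmic correction in low dimension. Combining the two parts gives the extensions $w^\pm_\Omega$ and the jump relation (\ref{thm:dlay1a}).
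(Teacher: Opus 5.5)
Your proposal is correct. The paper gives no proof of its own and quotes the statement as classical from \cite[Thm.~5.1]{La25a}, while your route is the standard argument of the literature it relies on (e.g.\ \cite{DoLa17}): split $S_{n,\lambda}$ into $S_n$, a remainder with $|DR_{n,\lambda}(\xi)|\le C|\xi|^{3-n}$ whose double layer is Lipschitz across $\partial\Omega$ and so carries no jump, and an analytic part, then quote the classical Laplace theorem.

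One small inaccuracy: for even $n\ge 4$ the logarithmic part of $S_{n,\lambda}$ is in general $b\ln|\xi|$ plus higher-order terms with $b\neq 0$, not a logarithm times a function vanishing at $0$. This does not affect your kernel estimates, since $\bigl|D\ln|\xi|\bigr|=|\xi|^{-1}\le|\xi|^{3-n}$ and $\bigl|D^2\ln|\xi|\bigr|\le C|\xi|^{2-n}$ for $0<|\xi|\le 1$ when $n\ge 4$.
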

For the classical  properties of the acoustic double layer potential, we refer for example to  the paper \cite[\S 7]{DoLa17} of the author and Dondi and to \cite[\S 5]{La25a}. We also set
\begin{equation}\label{thm:dlaybdry}
W_\Omega[S_{n,\lambda} ,\mu](x)\equiv w_\Omega[S_{n,\lambda} ,\mu](x)\qquad\forall x\in\partial\Omega\,.
\end{equation}
In order to shorten our notation, we introduce the following abbreviation. Let $m\in {\mathbb{N}}$,   $\alpha\in]0,1[$. Let $\Omega$ be a bounded open subset of ${\mathbb{R}}^{n}$ of class $C^{\max\{1,m\},\alpha}$. Then we set
\begin{equation}\label{eq:vm-1a}
V^{m-1,\alpha}(\partial\Omega)\equiv\left\{
\begin{array}{ll}
 C^{m-1,\alpha}(\partial\Omega)& \text{if}\ m\geq 1\,,
 \\
 V^{-1,\alpha}(\partial\Omega)& \text{if}\ m=0\,.
\end{array}
\right.
\end{equation}
\begin{remark}\label{rem:wtnotation}
{\em  Let  $m\in{\mathbb{N}}$, $\alpha\in]0,1[$. Let $\Omega$ be a bounded open subset of ${\mathbb{R}}^{n}$ of class $C^{\max\{1,m\},\alpha}$. Let $\lambda\in {\mathbb{C}}$. Let $S_{n,\lambda}$ be a fundamental solution of $\Delta+\lambda$. Then
 $W_\Omega[S_{n,\lambda},\cdot]$ is known to be compact in $C^{\max\{1,m\},\alpha}(\partial\Omega)$ (cf.~\textit{e.g.},   \cite[Cor.~9.1]{DoLa17}) and $W_\Omega^t[S_{n,\lambda},\cdot]$ denotes the transpose map to the operator $W_\Omega[S_{n,\lambda},\cdot]$ with respect to the natural duality pairing
\begin{equation}\label{rem:wtnotation1}
 \left(
 \left(C^{\max\{1,m\},\alpha}(\partial\Omega)\right)',C^{\max\{1,m\},\alpha}(\partial\Omega)
 \right)\,.
 \end{equation}
 Via the identification map ${\mathcal{J}}$ of Lemma \ref{lem:caincl}, $V^{m-1,\alpha}(\partial\Omega)$
 can be regarded as a subspace of $\left(C^{\max\{1,m\},\alpha}(\partial\Omega)\right)'$ and 
 it is   known that $W_\Omega^t[S_{n,\lambda},\cdot]$ is compact in $V^{m-1,\alpha}(\partial\Omega)$ (cf.~\textit{e.g.}, \cite[Cor.~10.1]{DoLa17} in case $m\geq 1$  and \cite[Cor.~8.5]{La25a} in case $m=0$). 
 Then    $W_\Omega^t[S_{n,\lambda},\cdot]_{|V^{m-1,\alpha}(\partial\Omega)}$ denotes the transpose map to the operator $W_\Omega[S_{n,\lambda},\cdot]$ with respect to the  duality pairing
\begin{equation}\label{rem:wtnotation2}
 \left(
 V^{m-1,\alpha}(\partial\Omega),C^{\max\{1,m\},\alpha}(\partial\Omega)
 \right)\,.
\end{equation}
(cf.~\textit{e.g.}, Kress~\cite[Defn.~4.5]{Kr14} for the definition of transpose map with respect to a duality pairing).
}\end{remark}

\section{Tangential derivatives of a H\"{o}lder continuous function}\label{sec:tandeho}

We now follow the scheme of Dalla Riva, the author and Musolino \cite[\S 2.22]{DaLaMu21} in order to introduce   the tangential derivatives of a function defined on the boundary of an open  subset of  ${\mathbb{R}}^n$ of class $C^1$. 
If $j,l\in\{1,\dots,n\}$,  then $M_{jl}$ denotes the tangential derivative 
 operator from $C^{1}(\partial\Omega)$ to $C^0(\partial\Omega)$ that takes $f$ to  
 \begin{equation}
\label{mlr}
M_{jl}[f]\equiv (\nu_\Omega)_{j}\frac{\partial\tilde{f}}{\partial x_{l}}-
(\nu_\Omega)_{l}\frac{\partial\tilde{f}}{\partial x_{j}}\qquad {\text{on}}\ \partial\Omega\,,
\end{equation}
where  $\tilde{f}$ is any   extension of class $C^1$ of $f$  to an open neighborhood of $\partial\Omega$. We note that $M_{jl}[f]$ is independent of the specific choice of $\tilde{f}$ (cf.~\textit{e.g.}, Dalla Riva, the author and Musolino
 \cite[\S 2.21]{DaLaMu21}). Then the tangential gradient of $f$ is delivered by the fomula
 \begin{eqnarray}\label{defn:tangrad} 
\lefteqn{
{\mathrm{grad}}_{\partial\Omega}f(x)=\nabla\tilde{f}(x)-\left(\nabla\tilde{f}(x)\cdot\nu_\Omega(x)\right)\nu_\Omega(x)
}
\\ \nonumber
&&\qquad 
=\sum_{l=1}^n M_{lr}[f](x)(\nu_\Omega)_l(x)e_r
\qquad\forall x\in\partial\Omega\,,\  \forall f\in C^1(\partial\Omega  )	\,,
\end{eqnarray}
where  $\{e_1,\cdots,e_n\}$ is the canonical basis of ${\mathbb{R}}^n$ (cf.~Kirsch and Hettlich \cite[A.5]{KiHe15},  reference \cite[\S 2.21]{DaLaMu21} with Dalla Riva and Musolino). 
In the specific case in which $n=3$, we note that
\begin{equation}\label{defn:tangrad1} 
\nu\bar{\wedge}\nabla\tilde{f}
=\nu\bar{\wedge}{\mathrm{grad}}_{\partial\Omega}f\qquad\forall f\in C^1(\partial\Omega  )\,.
\end{equation}
In particular, both ${\mathrm{grad}}_{\partial\Omega}f$ and
$\nu\bar{\wedge}\nabla\tilde{f}$ are independent of the specific choice of $\tilde{f}$ so that  
$\nu\bar{\wedge}\nabla\tilde{f}$ is simply written as $\nu\bar{\wedge}\nabla f$ by abuse of notation.
Next we prove we   the following elementary statement that indicates the geometrical meaning of the tangential derivatives.   
\begin{lemma}\label{lem:taden=3}
Let $\Omega$ be a bounded open  subset of ${\mathbb{R}}^{3}$ of class $C^1$.  If $\phi \in C^{1}(\partial\Omega)$, then
\begin{equation}\label{lem:taden=31}
\nu\bar{\wedge}{\mathrm{grad}}_{\partial\Omega}\phi
=M_{23}[\phi]e_1
-M_{13}[\phi] e_2+M_{12}[\phi] e_3\qquad \text{on}\ \partial\Omega\,,
\end{equation}
where $\{e_1,e_2,e_3\}$ is the canonical basis of ${\mathbb{R}}^3$ (cf.~\cite[(2.84)]{DaLaMu21}). 
\end{lemma}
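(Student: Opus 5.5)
The identity is purely pointwise, so I will fix $x\in\partial\Omega$, choose a $C^1$ extension $\tilde\phi$ of $\phi$ to an open neighborhood of $\partial\Omega$, and write $\nu=\nu_\Omega(x)$, $g=\nabla\tilde\phi(x)$. By (\ref{defn:tangrad1}) I may replace ${\mathrm{grad}}_{\partial\Omega}\phi$ with $\nabla\tilde\phi$ on the left-hand side of (\ref{lem:taden=31}). This reduces the claim to the algebraic identity
\[
\nu\bar{\wedge} g=M_{23}[\phi]e_1-M_{13}[\phi]e_2+M_{12}[\phi]e_3\,,
\]
with $M_{jl}[\phi]=\nu_jg_l-\nu_lg_j$ by (\ref{mlr}). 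The step (\ref{defn:tangrad1}) itself is immediate: ${\mathrm{grad}}_{\partial\Omega}\phi$ differs from $\nabla\tilde\phi$ by a multiple of $\nu$, and $\nu\bar{\wedge}\nu=0$.

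Next I will expand the cross product by the definition in (\ref{eq:scvepr}):
\[
\nu\bar{\wedge} g=(\nu_2g_3-\nu_3g_2,\ \nu_3g_1-\nu_1g_3,\ \nu_1g_2-\nu_2g_1)\,.
\]
Comparing components with (\ref{mlr}), the first component is $M_{23}[\phi]$, the third is $M_{12}[\phi]$, and the second is $-(\nu_1g_3-\nu_3g_1)=-M_{13}[\phi]$. This is exactly (\ref{lem:taden=31}).

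There is no real obstacle here. The only point that needs care is the sign of the middle component, which comes from the antisymmetry $M_{jl}=-M_{lj}$. I will also note that the result is independent of the chosen extension $\tilde\phi$, since the $M_{jl}[\phi]$ are (as recalled after (\ref{mlr})).
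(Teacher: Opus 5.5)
Your proof is correct: once (\ref{defn:tangrad1}) replaces ${\mathrm{grad}}_{\partial\Omega}\phi$ by $\nabla\tilde\phi$ (since $\nu\bar{\wedge}\nu=0$), the identity reduces to your componentwise comparison with (\ref{mlr}), including the sign $\nu_3g_1-\nu_1g_3=-M_{13}[\phi]$. The paper supplies no argument of its own beyond pointing to formula (2.84) of Dalla Riva, Lanza de Cristoforis and Musolino, and your direct computation is presumably the elementary verification behind that reference.
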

 We also note that if
  $U\in \{\Omega,\Omega^-\}$ and $\varphi \in C^{1}(\overline{U})$, then $\varphi$ has an extension of class $C^1({\mathbb{R}}^3)$ and 
\begin{eqnarray}\label{lem:taden=31a}
\lefteqn{
\nu_\Omega\bar{\wedge}{\color{black}\bigl( } \nabla \varphi{\color{black}\bigr)}_{|\partial\Omega}
=
\nu\bar{\wedge}{\mathrm{grad}}_{\partial\Omega}\phi_{{\color{black}|\partial\Omega}}
}
\\ \nonumber
&&\qquad\qquad
=M_{23}[\varphi_{|\partial\Omega}]e_1
-M_{13}[\varphi_{|\partial\Omega}] e_2+M_{12}[\phi_{|\partial\Omega}] e_3\qquad \text{on}\ \partial\Omega\,.
\end{eqnarray}
We also need the following  consequence of the Divergence Theorem.
\begin{lemma}
\label{lem:gagre}
Let $n\in {\mathbb{N}}\setminus\{0,1\}$.  
Let $\Omega$ be a bounded open  subset of ${\mathbb{R}}^{n}$ of class $C^1$. If $\varphi$, $\psi\in C^{1}(\partial\Omega)$, then
\[
\int_{\partial\Omega}M_{lr}[\varphi]\psi\,d\sigma=-
\int_{\partial\Omega}\varphi M_{lr}[\psi] \,d\sigma
\]
for all $l,r\in \{1,\dots,n\}$.
\end{lemma}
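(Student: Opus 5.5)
The plan is to extend $\varphi$ and $\psi$ to functions of class $C^1$ on a neighbourhood of $\overline{\Omega}$, which turns the statement into a boundary integral of a field whose tangential structure makes it the flux of a divergence-free field. More precisely, since $\Omega$ is of class $C^1$, I take extensions $\tilde\varphi,\tilde\psi\in C^1_c({\mathbb{R}}^n)$ of $\varphi,\psi$; such extensions exist, for instance, by \cite[\S 2.21]{DaLaMu21}. By (\ref{mlr}), and since $M_{lr}$ does not depend on the extension, the product rule gives on $\partial\Omega$
\[
M_{lr}[\varphi]\psi+\varphi M_{lr}[\psi]
=(\nu_\Omega)_l\frac{\partial(\tilde\varphi\tilde\psi)}{\partial x_r}-(\nu_\Omega)_r\frac{\partial(\tilde\varphi\tilde\psi)}{\partial x_l}
=M_{lr}[\varphi\psi]\,.
\]
It is therefore enough to show that $\int_{\partial\Omega}M_{lr}[f]\,d\sigma=0$ for every $f\in C^1(\partial\Omega)$ that admits a $C^1$ extension $\tilde f$.

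For this step I would first assume $\tilde f\in C^2$ and consider the vector field $A\equiv \frac{\partial\tilde f}{\partial x_r}e_l-\frac{\partial\tilde f}{\partial x_l}e_r$. Then $A\cdot\nu_\Omega=M_{lr}[f]$ on $\partial\Omega$, and ${\mathrm{div}}\,A=\partial_l\partial_r\tilde f-\partial_r\partial_l\tilde f=0$ by Schwarz's theorem. The classical Divergence Theorem on the bounded $C^1$ set $\Omega$ then gives $\int_{\partial\Omega}M_{lr}[f]\,d\sigma=\int_\Omega{\mathrm{div}}\,A\,dx=0$.

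The main obstacle is that $\tilde\varphi\tilde\psi$ is only $C^1$, so $A$ is merely continuous. I would remove this by mollification. Let $g_\epsilon\equiv(\tilde\varphi\tilde\psi)\ast\rho_\epsilon\in C^\infty$. Then $\nabla g_\epsilon\to\nabla(\tilde\varphi\tilde\psi)$ uniformly on compact sets, so $M_{lr}[g_{\epsilon|\partial\Omega}]\to M_{lr}[\varphi\psi]$ uniformly on $\partial\Omega$, because $\nu_\Omega$ is bounded and the expression in (\ref{mlr}) is linear in the gradient of the extension. Since $\partial\Omega$ has finite measure, the integrals pass to the limit, and we get $\int_{\partial\Omega}M_{lr}[\varphi\psi]\,d\sigma=0$. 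This yields the claimed identity.

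Alternatively, one could apply the distributional Theorem \ref{thm:div0a} directly to the Hölder continuous field $A$. However, I would then still need to verify that $\langle E^\sharp_\Omega[{\mathrm{div}}\,A],1\rangle=0$, which again amounts to a symmetry-of-second-derivatives argument in the weak sense, and Theorem \ref{thm:div0a} also requires a Lipschitz $\Omega$. So the mollification route is the one I would use.
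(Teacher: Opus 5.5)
Your proof is correct, and it is essentially the route the paper intends: the paper gives no proof of its own, only calling the lemma a consequence of the Divergence Theorem and citing \cite[Lem.~2.86]{DaLaMu21}, and your argument supplies exactly that, via the product-rule reduction to $\int_{\partial\Omega}M_{lr}[\varphi\psi]\,d\sigma=0$, the divergence-free field $\frac{\partial\tilde f}{\partial x_r}e_l-\frac{\partial\tilde f}{\partial x_l}e_r$, and mollification to pass from $C^\infty$ to $C^1$ extensions. One small correction to your discarded alternative: since $\tilde\varphi\tilde\psi$ is only of class $C^1$, the field $A$ is merely continuous, not H\"{o}lder continuous, so Theorem \ref{thm:div0a} would not apply; the Lipschitz hypothesis, by contrast, is no obstacle, because a bounded open set of class $C^1$ is Lipschitz.
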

For a proof, we refer for example to Dalla Riva, the author and Musolino
 \cite[Lem.~2.86]{DaLaMu21}. Then we can introduce the following known definition.

\begin{definition}\label{defn:tanderdis}
  Let $\alpha\in]0,1]$. Let $\Omega$ be a bounded open   subset of ${\mathbb{R}}^{n}$ of class $C^{1,\alpha}$.  If $\tau\in (C^{1,\alpha}(\partial\Omega))'$, then we set
\begin{equation}\label{defn:tanderdis1}
\langle M_{lr}[\tau],\varphi \rangle \equiv-\langle \tau,M_{lr}[\varphi]\rangle \qquad\forall \varphi\in C^{1,\alpha}(\partial\Omega)\,.
\end{equation}
for all $l,r\in \{1,\dots,n\}$.
\end{definition}
 Then $M_{lr}$ is linear and continuous from $C^{0,\alpha}(\partial\Omega)$
  to the dual $(C^{1,\alpha}(\partial\Omega))'$ and we want to show that
  $M_{lr}$ is linear and continuous from $C^{0,\alpha}(\partial\Omega)$
  to $V^{-1,\alpha}(\partial\Omega)$. To do so, we need some preliminary statement.
\begin{proposition}\label{prop:vmlrmu}
Let $\alpha\in]0,1[$. Let $\Omega$ be a bounded open   subset of ${\mathbb{R}}^{n}$ of class $C^{1,\alpha}$. Let $\mu\in C^{0,\alpha}(\partial\Omega)$, $l,r\in \{1,\dots,n\}$. Then the following statements hold.
\begin{enumerate}
\item[(i)] 
\begin{eqnarray}\label{prop:vmlrmu1}
\lefteqn{
v_\Omega[S_n,M_{lr}[\mu]](x)=
 \langle (r_{|\partial\Omega}^tM_{lr}[\mu])(y),S_{n} (x-y)\rangle
}
\\ \nonumber
&&\   
=\frac{\partial}{\partial x_r}v_\Omega[S_n,(\nu_\Omega)_l\mu](x)
-\frac{\partial}{\partial x_l}v_\Omega[S_n,(\nu_\Omega)_r\mu](x)
\ \ \forall x\in {\mathbb{R}}^{n}\setminus\partial\Omega\,.
\end{eqnarray}
\item[(ii)] $v_\Omega^+[S_n,M_{lr}[\mu]]\in C^{0,\alpha}(\overline{\Omega})$, $v_\Omega^-[S_n,M_{lr}[\mu]]\in C^{0,\alpha}_{{\mathrm{loc}}}(\overline{\Omega^-})$ and
\begin{eqnarray*} 
\lefteqn{
v_\Omega^\pm[S_n,M_{lr}[\mu]](x)=
\frac{\partial}{\partial x_r}v_\Omega^\pm[S_n,(\nu_\Omega)_l\mu](x)
-\frac{\partial}{\partial x_l}v_\Omega^\pm[S_n,(\nu_\Omega)_r\mu](x)
}
\\ \nonumber
&&\qquad\qquad 
={\mathrm{p.v.}}\int_{\partial\Omega}
\frac{\partial}{\partial x_r}S_n(x-y)(\nu_\Omega)_l(y)\mu(y)
\\ \nonumber
&&\qquad\qquad\qquad\quad 
-\frac{\partial}{\partial x_l}S_n(x-y)(\nu_\Omega)_r(y)\mu(y)\,d\sigma_y\qquad\forall x\in\partial\Omega
 \,.
\end{eqnarray*}
\item[(iii)] 
$\langle M_{lr}[\mu],V_\Omega[S_n,\psi]\rangle=\int_{\partial\Omega}V_\Omega[S_n,M_{lr}[\mu]]\psi\,d\sigma
$
 for all $\psi\in C^{0,\alpha}(\partial\Omega)$.
\end{enumerate}
\end{proposition}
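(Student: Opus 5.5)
For (i), I will use the definition of the transpose tangential derivative. For $x\in{\mathbb{R}}^n\setminus\partial\Omega$ the function $y\mapsto S_n(x-y)$ is of class $C^\infty$ in a neighborhood of $\partial\Omega$. Its restriction therefore belongs to $C^{1,\alpha}(\partial\Omega)$, and
\[
\langle (r_{|\partial\Omega}^tM_{lr}[\mu])(y),S_{n} (x-y)\rangle=-\int_{\partial\Omega}\mu(y)M_{lr,y}[S_n(x-\cdot)](y)\,d\sigma_y\,.
\]
By definition (\ref{mlr}), with $\tilde f(y)=S_n(x-y)$,
\[
M_{lr,y}[S_n(x-\cdot)]=(\nu_\Omega)_l(y)\,\partial_{y_r}S_n(x-y)-(\nu_\Omega)_r(y)\,\partial_{y_l}S_n(x-y)\,.
\]
Since $\partial_{y_j}S_n(x-y)=-\partial_{x_j}S_n(x-y)$, the minus sign cancels. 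Differentiating under the integral sign, which is legitimate because $x$ stays away from $\partial\Omega$, gives $\partial_{x_r}v_\Omega[S_n,(\nu_\Omega)_l\mu]-\partial_{x_l}v_\Omega[S_n,(\nu_\Omega)_r\mu]$. This proves (i).

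For (ii), $(\nu_\Omega)_l\mu\in C^{0,\alpha}(\partial\Omega)$ because $\nu_\Omega\in C^{0,\alpha}$. I will invoke the classical regularity of the single layer potential with Hölder density on $C^{1,\alpha}$ domains: $v^+_\Omega[S_n,\phi]\in C^{1,\alpha}(\overline{\Omega})$ and $v^-_\Omega[S_n,\phi]\in C^{1,\alpha}_{\mathrm{loc}}(\overline{\Omega^-})$ for $\phi\in C^{0,\alpha}(\partial\Omega)$ (cf.~\cite{DoLa17}). Their first order derivatives then extend to $C^{0,\alpha}(\overline\Omega)$ and $C^{0,\alpha}_{\mathrm{loc}}(\overline{\Omega^-})$, and by (i) so does $v^\pm_\Omega[S_n,M_{lr}[\mu]]$.

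For the boundary values, I will use the jump formula
\[
\partial_{x_j}v^\pm_\Omega[S_n,\phi](x)=\mp\tfrac12\phi(x)(\nu_\Omega)_j(x)+{\mathrm{p.v.}}\int_{\partial\Omega}\partial_{x_j}S_n(x-y)\phi(y)\,d\sigma_y\,.
\]
Taking $\phi=(\nu_\Omega)_l\mu$ with $j=r$ and $\phi=(\nu_\Omega)_r\mu$ with $j=l$, the jump terms are $\mp\frac12\mu\,\nu_l\nu_r$ in both cases, so they cancel in the difference. What remains is the principal value formula, and in particular the interior and exterior traces coincide. The main obstacle is exactly here: this step depends on the gradient jump formula for the single layer with only Hölder density, and on existence of the two separate principal values. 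If separate existence is problematic, I would argue with the combined kernel instead.

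For (iii), I will approximate. Lemma \ref{lem:aprbdma} (case $m=0$) gives $\mu_j\in C^{1,\alpha}(\partial\Omega)$, bounded in $C^{0,\alpha}$, with $\mu_j\to\mu$ in $C^{0,\beta}$. For smooth $\mu_j$, $M_{lr}[\mu_j]$ is a genuine function in $C^{0,\alpha}$. Fubini and the symmetry $S_n(x-y)=S_n(y-x)$ give
\[
\langle M_{lr}[\mu_j],V_\Omega[S_n,\psi]\rangle=\int_{\partial\Omega}V_\Omega[S_n,M_{lr}[\mu_j]]\psi\,d\sigma\,.
\]
I then pass to the limit. On the left, $V_\Omega[S_n,\psi]\in C^{1,\beta}(\partial\Omega)$ and $M_{lr}$ is continuous from $C^{0,\beta}$ to $(C^{1,\beta})'$. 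On the right, the principal value representation from (ii) is continuous from $C^{0,\beta}(\partial\Omega)$ to $C^{0,\beta}(\partial\Omega)$ (singular integral and double-layer–type estimates). Running the argument with some $\beta<\alpha$ and using that both sides do not depend on $\beta$ gives the identity for $\mu$.
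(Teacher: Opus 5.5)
Your proof is correct. Parts (i) and (ii) follow the paper's argument: the definition of the distributional $M_{lr}$ plus differentiation under the integral sign, then the $C^{1,\alpha}$ regularity of $v^\pm_\Omega[S_n,\cdot]$ and the gradient jump formula of \cite[Thm.~7.1]{DoLa17}, in which the jump terms $\mp\frac{1}{2}\mu(\nu_\Omega)_l(\nu_\Omega)_r$ cancel. The existence of the separate principal values that worries you is part of that classical jump formula, and the paper relies on it in exactly the same way.

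For (iii) you take a genuinely different route. The paper works directly with $\mu$:
\begin{itemize}
\item it writes $\langle M_{lr}[\mu],V_\Omega[S_n,\psi]\rangle=-\int_{\partial\Omega}\mu\, M_{lr}[V_\Omega[S_n,\psi]]\,d\sigma$ and expresses $M_{lr}[V_\Omega[S_n,\psi]]$ through principal values of $\partial_{x_h}S_n$;
\item it then interchanges the $\epsilon$-limits with the outer integrations by dominated convergence, using a uniform bound on the maximal truncated singular integral \cite[Thm.~3.3]{La23b};
\item finally it applies Fubini on $\{(x,y):|x-y|\geq\epsilon\}$ and uses the oddness of $\nabla S_n$.
\end{itemize}
You instead reduce to densities $\mu_j\in C^{1,\alpha}(\partial\Omega)$, for which $M_{lr}[\mu_j]$ is a H\"older function and the identity is plain Fubini for the absolutely integrable kernel $S_n$, and then pass to the limit. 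This avoids the maximal-function estimate and the truncation bookkeeping. The price is the approximation Lemma \ref{lem:aprbdma} and working in an exponent $\beta<\alpha$, which is the same device the paper uses in Theorem \ref{thm:tanpargr} and Lemma \ref{lem:nucuv}. The paper's computation, in exchange, is self-contained at fixed $\mu$ and exhibits the identity as a transposition of the truncated kernels.

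One suggestion for the limit on the right-hand side: rather than appealing vaguely to ``singular integral estimates,'' use the first equality in (ii). It says $V_\Omega[S_n,M_{lr}[\mu_j]]$ is the trace of $\partial_{x_r}v^+_\Omega[S_n,(\nu_\Omega)_l\mu_j]-\partial_{x_l}v^+_\Omega[S_n,(\nu_\Omega)_r\mu_j]$. Since $v^+_\Omega[S_n,\cdot]$ is continuous from $C^{0,\beta}(\partial\Omega)$ to $C^{1,\beta}(\overline{\Omega})$, you get uniform convergence on $\partial\Omega$, which is all you need when pairing with $\psi$.
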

{\bf Proof.} (i) By the definition of $M_{lr}$ and by the classical differentiation theorem for integrals depending on a parameter, we have
\begin{eqnarray*} 
\lefteqn{
 \langle (r_{|\partial\Omega}^tM_{lr}[\mu])(y),S_{n} (x-y)\rangle
 =-\int_{\partial\Omega}\mu(y)M_{lr,y}S_{n} (x-y)\,d\sigma
}
\\ \nonumber
&& 
=-\int_{\partial\Omega}\mu(y)(\nu_\Omega)_l(y)\frac{\partial  }{\partial y_r}\left(S_{n} (x-y)\right)
-\mu(y)(\nu_\Omega)_r(y)\frac{\partial  }{\partial y_l}\left(S_{n} (x-y)\right)\,d\sigma
\\ \nonumber
&& 
=\int_{\partial\Omega}\mu(y)(\nu_\Omega)_l(y)\frac{\partial  }{\partial x_r} S_{n} (x-y) 
-\mu(y)(\nu_\Omega)_r(y)\frac{\partial  }{\partial x_l}S_{n} (x-y)\,d\sigma
\\ \nonumber
&& 
=\frac{\partial}{\partial x_r}v_\Omega[S_n,(\nu_\Omega)_l\mu](x)
-\frac{\partial}{\partial x_l}v_\Omega[S_n,(\nu_\Omega)_r\mu](x)
\qquad\forall x\in {\mathbb{R}}^{n}\setminus\partial\Omega\,.
\end{eqnarray*}
(ii) Since the  the components of $\nu_\Omega$ belong to $C^{0,\alpha}(\partial\Omega)$ and the pointwise product is continuous in $C^{0,\alpha}(\partial\Omega)$, we have $(\nu_\Omega)_l\mu$, $(\nu_\Omega)_r\mu\in C^{0,\alpha}(\partial\Omega)$. Then  a classical results on the single layer potential implies that 
\begin{eqnarray*}
&&v_\Omega^+[S_n,(\nu_\Omega)_l\mu],\quad v_\Omega^+[S_n,(\nu_\Omega)_r\mu]\in 
C^{1,\alpha}(\overline{\Omega})\,,
\\
&&v_\Omega^-[S_n,(\nu_\Omega)_l\mu]\,,\quad
v_\Omega^-[S_n,(\nu_\Omega)_r\mu]\in 
C^{1,\alpha}_{{\mathrm{loc}}}(\overline{\Omega^-})\,,
\end{eqnarray*}
(cf.~\textit{e.g.} \cite[Thm.~7.1 (i)]{DoLa17}). 
  Then formula (\ref{prop:vmlrmu1}) shows that
\[
v_\Omega^+[S_n,M_{lr}[\mu]]\in C^{0,\alpha}(\overline{\Omega})\,,\quad
v_\Omega^-[S_n,M_{lr}[\mu]]\in C^{0,\alpha}_{{\mathrm{loc}}}(\overline{\Omega^-})\,.
\]
Moreover, statement (i) and  
  the classical jump formulas for the partial derivatives of the single layer potential
imply the validity of statement (ii) (cf.~\textit{e.g.} \cite[Thm.~7.1 (ii)]{DoLa17}).

 (iii) By the definition of the distributional tangential  derivative $ M_{lr}$ and  by
 the classical jump formulas for the partial derivatives of the single layer potential
 (cf.~\textit{e.g.} \cite[Thm.~7.1 (ii)]{DoLa17}), we have
 \begin{eqnarray} \label{prop:vmlrmu3}
\lefteqn{
\langle M_{lr}[\mu],V_\Omega[S_n,\psi]\rangle
=-\langle \mu, M_{lr}[V_\Omega[S_n,\psi]]\rangle
}
\\ \nonumber
&&\quad 
=-\int_{\partial\Omega}\biggl(
(\nu_\Omega)_l(x)\frac{\partial}{\partial x_r}v_\Omega^+[S_n,\psi](x)
\\ \nonumber
&&\quad\quad 
-(\nu_\Omega)_r(x)\frac{\partial}{\partial x_l}v_\Omega^+[S_n, \psi](x)
\biggr)\mu(x)\,d\sigma_x
\\ \nonumber
&&\quad 
=-\int_{\partial\Omega}\biggl(
(\nu_\Omega)_l(x){\mathrm{p.v.}}\int_{\partial\Omega}
 \frac{\partial}{\partial x_r}S_n(x-y)\psi(y)\,d\sigma_y\mu(x) 
\\ \nonumber
&&\quad\quad 
-(\nu_\Omega)_r(x)
{\mathrm{p.v.}}\int_{\partial\Omega}
\frac{\partial}{\partial x_l}S_n(x-y) \psi(y)\,d\sigma_y
\mu(x)\biggr)\,d\sigma_x
\\ \nonumber
&&\quad 
=-\int_{\partial\Omega}
(\nu_\Omega)_l(x)\lim_{\epsilon\to 0}\int_{\partial\Omega\setminus {\mathbb{B}}_n(x,\epsilon)}
 \frac{\partial}{\partial x_r}S_n(x-y)\psi(y)\,d\sigma_y\mu(x)\,d\sigma_x
\\ \nonumber
&&\quad\quad 
+ \int_{\partial\Omega}(\nu_\Omega)_r(x)
\lim_{\epsilon\to 0}\int_{\partial\Omega\setminus {\mathbb{B}}_n(x,\epsilon)}
\frac{\partial}{\partial x_l}S_n(x-y) \psi(y)\,d\sigma_y
\mu(x)\,d\sigma_x\,.
\end{eqnarray}
 Now the inequality \cite[Thm.~3.3]{La23b} for the maximal function of the above principal values with $\mu$ equal to $1$ 
 and an  elementary inequality  on weakly singular integrals (cf.~\textit{e.g.},   \cite[Lem.~2.53]{DaLaMu21}) imply that
 \begin{eqnarray*} 
\lefteqn{
\sup_{x\in \partial\Omega}
\sup_{\epsilon\in ]0,+\infty[}
\left|\,
\int_{(\partial\Omega)\setminus{\mathbb{B}}_{n}(x,\epsilon)}
\frac{\partial S_n}{\partial x_h}(x-y)f(y)\,d\sigma_y
\right|
}
\\ \nonumber
&&\qquad 
\leq \sup_{x\in \partial\Omega}
\sup_{\epsilon\in ]0,+\infty[}
\left|\,
\int_{(\partial\Omega)\setminus{\mathbb{B}}_{n}(x,\epsilon)}
\frac{\partial S_n}{\partial x_h}(x-y)(f(y)-f(x))\,d\sigma_y
\right|
\\ \nonumber
&&\qquad\quad 
+ \sup_{\partial\Omega}|f| \sup_{x\in \partial\Omega}
\sup_{\epsilon\in ]0,+\infty[}
\left|\,
\int_{(\partial\Omega)\setminus{\mathbb{B}}_{n}(x,\epsilon)}
\frac{\partial S_n}{\partial x_h}(x-y)\,d\sigma_y
\right|
\\ \nonumber
&&\qquad 
\leq \sup_{x\in \partial\Omega}\frac{|f|_\alpha}{s_n}\int_{\partial\Omega}\frac{d\sigma_y}{|x-y|^{n-1-\alpha}}
\\ \nonumber
&&\qquad\quad 
+
\sup_{ \partial\Omega}|f|
\sup_{x\in \partial\Omega}
\sup_{\epsilon\in ]0,+\infty[}
\left|\,
\int_{(\partial\Omega)\setminus{\mathbb{B}}_{n}(x,\epsilon)}
\frac{\partial S_n}{\partial x_h}(x-y) \,d\sigma_y
\right|<+\infty
 \end{eqnarray*}
 for all $f\in C^{0,\alpha}(\partial\Omega)$ and $h\in\{1,\dots,n\}$. Then (\ref{prop:vmlrmu3}),  the Dominated Convergence Theorem, the Fubini Theorem in the set 
 \[
 \{
(x,y)\in   (\partial\Omega)^2:\,|x-y|\geq\epsilon\}
\]
 and statement (ii) imply that
 \begin{eqnarray*}  
\lefteqn{
\langle M_{lr}[\mu],V_\Omega[S_n,\psi]\rangle
=-\langle \mu, M_{lr}[V_\Omega[S_n,\psi]]\rangle
}
\\ \nonumber
&&\ 
=-\lim_{\epsilon\to 0}\int_{\partial\Omega}
(\nu_\Omega)_l(x) \int_{\partial\Omega\setminus {\mathbb{B}}_n(x,\epsilon)}
 \frac{\partial}{\partial x_r}S_n(x-y)\psi(y)\,d\sigma_y\mu(x)\,d\sigma_x
\\ \nonumber
&&\ \quad 
+ \lim_{\epsilon\to 0}\int_{\partial\Omega}(\nu_\Omega)_r(x)
 \int_{\partial\Omega\setminus {\mathbb{B}}_n(x,\epsilon)}
\frac{\partial}{\partial x_l}S_n(x-y) \psi(y)\,d\sigma_y
\mu(x)\,d\sigma_x
\\ \nonumber
&&\ 
=\lim_{\epsilon\to 0}\int_{\{
(x,y)\in   (\partial\Omega)^2:\,|x-y|\geq\epsilon\}}
(\nu_\Omega)_l(x) 
 \frac{\partial}{\partial y_r}\left(S_n(x-y)\right)
 \psi(y)\mu(x)
 \,d\sigma_y
 \,d\sigma_x
\\ \nonumber
&&\ \quad 
- \lim_{\epsilon\to 0}\int_{\{
(x,y)\in   (\partial\Omega)^2:\,|x-y|\geq\epsilon\}}(\nu_\Omega)_r(x)
\frac{\partial}{\partial y_l}\left(S_n(x-y) \right)
\psi(y)\mu(x)
\,d\sigma_y
\,d\sigma_x
\\ \nonumber
&&\ 
=\lim_{\epsilon\to 0}\int_{\{
(x,y)\in   (\partial\Omega)^2:\,|x-y|\geq\epsilon\}}
(\nu_\Omega)_l(x) 
 \frac{\partial}{\partial y_r}\left(S_n(x-y)\right)\mu(x)
 \psi(y)
 \,d\sigma_x\,d\sigma_y
\\ \nonumber
&&\ \quad 
- \lim_{\epsilon\to 0}\int_{\{
(x,y)\in   (\partial\Omega)^2:\,|x-y|\geq\epsilon\}}(\nu_\Omega)_r(x)
\frac{\partial}{\partial y_l}\left(S_n(x-y) \right)
\mu(x)
\psi(y)
\,d\sigma_x\,d\sigma_y
\\ \nonumber
&&\ 
=\lim_{\epsilon\to 0}\int_{\partial\Omega}
\int_{\partial\Omega\setminus {\mathbb{B}}_n(y,\epsilon)}
(\nu_\Omega)_l(x) 
 \frac{\partial}{\partial y_r}\left(S_n(x-y)\right)\mu(x)
  \,d\sigma_x
 \psi(y)
 \,d\sigma_y
\\ \nonumber
&&\ \quad 
- \lim_{\epsilon\to 0}\int_{\partial\Omega}
\int_{\partial\Omega\setminus {\mathbb{B}}_n(y,\epsilon)}
(\nu_\Omega)_r(x)
\frac{\partial}{\partial y_l}\left(S_n(x-y) \right)
\mu(x)
\,d\sigma_x
\psi(y)
\,d\sigma_y
\\ \nonumber
&&\ 
=\int_{\partial\Omega} \lim_{\epsilon\to 0}
\int_{\partial\Omega\setminus {\mathbb{B}}_n(y,\epsilon)}
(\nu_\Omega)_l(x) 
 \frac{\partial}{\partial y_r}\left(S_n(x-y)\right)\mu(x)
  \,d\sigma_x
\psi(y)
\,d\sigma_y
\\ \nonumber
&&\ \quad
-\int_{\partial\Omega} \lim_{\epsilon\to 0}
\int_{\partial\Omega\setminus {\mathbb{B}}_n(y,\epsilon)}
(\nu_\Omega)_r(x)
\frac{\partial}{\partial y_l}\left(S_n(x-y) \right)
\mu(x)
\,d\sigma_x
\psi(y)
\,d\sigma_y
\\ \nonumber
&&\  
=\int_{\partial\Omega} \frac{\partial}{\partial y_r}v_\Omega^+[S_n,(\nu_\Omega)_l\mu](y)\psi(y)
 -  \frac{\partial}{\partial y_l}v_\Omega^+[S_n,(\nu_\Omega)_r\mu](y)\psi(y)
\,d\sigma_y
\\ \nonumber
&&\
=\int_{\partial\Omega}
V_\Omega[S_n,M_{lr}[\mu]]
\psi(y)
\,d\sigma_y\,,
\end{eqnarray*}
and thus the proof is complete.
\hfill  $\Box$ 

\vspace{\baselineskip}

\begin{proposition}\label{prop:vmlrmuco}
 Let $\alpha\in]0,1[$. Let $\Omega$ be a bounded open   subset of ${\mathbb{R}}^{n}$ of class $C^{1,\alpha}$. Let $l,r\in \{1,\dots,n\}$. If $\mu\in C^{0,\alpha}(\partial\Omega)$, then $M_{lr}[\mu]$ belongs to $V^{-1,\alpha}(\partial\Omega)$. Moreover, the map  $M_{lr}$ from  $C^{0,\alpha}(\partial\Omega)$ to $V^{-1,\alpha}(\partial\Omega)$ that takes $\mu$ to $M_{lr}[\mu]$ is linear and continuous.
\end{proposition}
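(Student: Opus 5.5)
The plan is to realize $M_{lr}[\mu]$ as the jump across $\partial\Omega$ of the distributional normal derivatives of a harmonic function that is $\alpha$-H\"older up to the boundary from both sides. Membership in $V^{-1,\alpha}(\partial\Omega)$, and the bound, then follow from the continuity of the normal derivative of Proposition \ref{prop:ricodnu} and of its exterior analogue from \cite[\S 3]{La25}. The candidate is the single layer potential with density $M_{lr}[\mu]$. Set
\[
u^{+}\equiv v_\Omega^+[S_n,M_{lr}[\mu]]\,,\qquad u^{-}\equiv v_\Omega^-[S_n,M_{lr}[\mu]]\,.
\]
By Proposition \ref{prop:vmlrmu} (ii), $u^+\in C^{0,\alpha}(\overline{\Omega})$ and $u^-\in C^{0,\alpha}_{{\mathrm{loc}}}(\overline{\Omega^-})$. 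Both are harmonic, so $\Delta u^\pm=0$, and $u^+\in C^{0,\alpha}(\overline{\Omega})_\Delta$ with $u^-$ in the corresponding exterior space. By (\ref{prop:vmlrmu1}) and the classical $C^{1,\alpha}$ estimates for $v_\Omega^\pm[S_n,\cdot]$ with H\"older densities (\cite[Thm.~7.1]{DoLa17}), together with the continuity of multiplication by $(\nu_\Omega)_l,(\nu_\Omega)_r$ in $C^{0,\alpha}(\partial\Omega)$, we get
\[
\|u^+\|_{C^{0,\alpha}(\overline{\Omega})}+\|u^-\|_{C^{0,\alpha}(\overline{\Omega^-}\cap \overline{{\mathbb{B}}_n(0,R)})}\le C\|\mu\|_{C^{0,\alpha}(\partial\Omega)}\,.
\]

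The key identity to establish is the jump relation
\[
M_{lr}[\mu]=\partial_{\nu_{\Omega}}u^- -\partial_{\nu_\Omega}u^+\qquad\text{in }(C^{1,\alpha}(\partial\Omega))'\,,
\]
where $\partial_{\nu_\Omega}u^-$ is the exterior distributional normal derivative (taken with respect to $\nu_\Omega$). I would prove it by approximation. By Lemma \ref{lem:aprbdma} with $m=0$, pick $\mu_j\in C^{1,\alpha}(\partial\Omega)$, bounded in $C^{0,\alpha}(\partial\Omega)$, with $\mu_j\to\mu$ in $C^{0,\beta}(\partial\Omega)$ for $\beta\in]0,\alpha[$. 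Then $M_{lr}[\mu_j]\in C^{0,\alpha}(\partial\Omega)$, and in the classical setting the normal derivative of the single layer potential jumps by exactly the density. By (\ref{lem:conoderdeducl1}) and its exterior analogue, the identity holds for $\mu_j$ with classical normal derivatives. To pass to the limit, note two facts. On the left, $\langle M_{lr}[\mu_j],v\rangle=-\int_{\partial\Omega}\mu_j M_{lr}[v]\,d\sigma\to\langle M_{lr}[\mu],v\rangle$. On the right, $u_j^\pm\to u^\pm$ in $C^{0,\beta}$ by the estimates above applied with exponent $\beta$. Continuity of the normal derivative (Proposition \ref{prop:ricodnu} with $\beta$ in place of $\alpha$) then gives convergence in $V^{-1,\beta}(\partial\Omega)\subseteq (C^{1,\alpha}(\partial\Omega))'$; this inclusion uses $S^t_{\Omega,+}$ relative to $\beta$ restricted to $C^{1,\alpha}$. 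Testing against a fixed $v\in C^{1,\alpha}(\partial\Omega)$ yields the identity.

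With the identity in hand, Proposition \ref{prop:ricodnu} gives $\partial_{\nu_\Omega}u^+\in V^{-1,\alpha}(\partial\Omega)$ with norm at most $C\|u^+\|_{C^{0,\alpha}(\overline{\Omega})_\Delta}\le C'\|\mu\|_{C^{0,\alpha}(\partial\Omega)}$. For $u^-$, take $R$ large and a cutoff $\chi\in{\mathcal{D}}({\mathbb{R}}^n)$ with $\chi=1$ near $\partial\Omega$ and ${\mathrm{supp}}\,\chi\subseteq{\mathbb{B}}_n(0,R)$. Then $\chi u^-$ is in $C^{0,\alpha}$ of the bounded $C^{1,\alpha}$ set $\Omega^-\cap{\mathbb{B}}_n(0,R)$. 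Its Laplacian $2\nabla\chi\cdot\nabla u^-+u^-\Delta\chi$ is smooth, since it is supported away from $\partial\Omega$, and bounded by $C\|\mu\|_{C^{0,\alpha}(\partial\Omega)}$. Applying Proposition \ref{prop:ricodnu} on that set and restricting to the component $\partial\Omega$ of its boundary (the outer normal there is $-\nu_\Omega$) puts $\partial_{\nu_\Omega}u^-$ in $V^{-1,\alpha}(\partial\Omega)$ with the same type of bound. This last step uses that the restriction to a boundary component maps the $V^{-1,\alpha}$ space of the annular set into $V^{-1,\alpha}(\partial\Omega)$; that follows by a harmonic extension argument, or one may cite \cite[\S 3]{La25} directly. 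Linearity of $\mu\mapsto M_{lr}[\mu]$ is clear, so continuity follows from the bound.

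The main obstacle is the limit passage in the jump identity. One must make sure that the normal derivatives relative to the exponent $\beta$ and those relative to $\alpha$ define the same functional on $C^{1,\alpha}(\partial\Omega)$. Both are given by (\ref{defn:conoderdedu1}) with $\Delta u=0$, so they reduce to $\int_{\partial\Omega}uS_{\Omega,+}[v]\,d\sigma$, and this compatibility is immediate since $S_{\Omega,+}$ relative to $\beta$ extends the one relative to $\alpha$. The exterior counterpart requires the analogous formula from \cite{La25}, and that is where I expect the most care to be needed.
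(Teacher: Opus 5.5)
Your interior half is sound: the bound $\|u^+\|_{C^{0,\alpha}(\overline{\Omega})}\leq C\|\mu\|_{C^{0,\alpha}(\partial\Omega)}$, the use of Proposition \ref{prop:ricodnu}, and the limit passage in $\int_{\partial\Omega}u_j^+S_{\Omega,+}[v]\,d\sigma$ are all correct. The gap is the exterior term. Because $\partial_{\nu_\Omega}u^+$ already lies in $V^{-1,\alpha}(\partial\Omega)$, your jump identity shows that $M_{lr}[\mu]\in V^{-1,\alpha}(\partial\Omega)$ if and only if $\partial_{\nu_\Omega}u^-\in V^{-1,\alpha}(\partial\Omega)$. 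So the whole content of the proposition now sits in the step you settle with ``a harmonic extension argument, or cite \cite{La25}''.

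That step is not routine. The space $V^{-1,\alpha}(\partial U)$, with $U={\mathbb{B}}_n(0,R)\setminus\overline{\Omega}$, is generated by $S_{U,+}^t$, i.e.\ by a Dirichlet-to-Neumann map on the \emph{exterior} side of $\partial\Omega$. By contrast, $V^{-1,\alpha}(\partial\Omega)$ is generated by $S_{\Omega,+}^t$ on the interior side, and nothing you quote compares the two. The natural harmonic extension argument is to glue $u^-$ to the interior harmonic extension of its trace. But that just gives back $u^+$, since the two traces coincide by Proposition \ref{prop:vmlrmu} (ii), and hence gives back $v_\Omega[S_n,M_{lr}[\mu]]$ itself. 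All you learn is that the single layer potential of $M_{lr}[\mu]$ has a $C^{0,\alpha}$ trace. Converting that into membership in $V^{-1,\alpha}(\partial\Omega)$ is exactly what is missing. The paper invokes \cite{La25} only for the definition of the exterior normal derivative. Unless you can point there to an exterior analogue of Proposition \ref{prop:ricodnu} with values in $V^{-1,\alpha}(\partial\Omega)$, your proof is incomplete.

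The missing ingredient is the characterization of $V^{-1,\alpha}(\partial\Omega)$ through the single layer potential, \cite[Defn.~15.10, Thm.~18.1, Thm.~18.3 (ii)]{La24b}. An element $\tau$ of $(C^{1,\alpha}(\partial\Omega))'$ belongs to $V^{-1,\alpha}(\partial\Omega)$ exactly when its single layer $V_\Omega[S_n,\tau]$ lies in $C^{0,\alpha}(\partial\Omega)$. Moreover, the $V^{-1,\alpha}(\partial\Omega)$ norm is controlled by the $C^{0,\alpha}(\partial\Omega)$ norm of that single layer. The correction term there vanishes here, since $\langle M_{lr}[\mu],1\rangle=-\int_{\partial\Omega}\mu M_{lr}[1]\,d\sigma=0$.

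This is how the paper argues. Proposition \ref{prop:vmlrmu} (ii) gives $V_\Omega[S_n,M_{lr}[\mu]]=\frac{\partial}{\partial x_r}v^+_\Omega[S_n,(\nu_\Omega)_l\mu]-\frac{\partial}{\partial x_l}v^+_\Omega[S_n,(\nu_\Omega)_r\mu]$ on $\partial\Omega$. Proposition \ref{prop:vmlrmu} (iii) supplies the symmetry identity that matches this trace with the single layer of \cite{La24b}. Finally, the $C^{1,\alpha}$ continuity of $v^+_\Omega[S_n,\cdot]$ gives the bound, which is precisely your estimate on $u^+$. With that characterization available, you can drop the jump identity, the approximation by $\mu_j$, the cutoff, and the exterior normal derivative altogether.
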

{\bf Proof.}  If $\mu\in C^{0,\alpha}(\partial\Omega)$, then Proposition \ref{prop:vmlrmu} (ii), (iii) and the equivalent definition  of  $V^{-1,\alpha}(\partial\Omega)$
in  \cite[Def.~13.2, 15.10, Thm.~18.1]{La24b})  imply that $M_{lr}[\mu]$ belongs to $V^{-1,\alpha}(\partial\Omega)$. Since $M_{lr}$ is linear, we now turn to prove that continuity of $M_{lr}$ by exploiting Theorem 18.3 (ii) of \cite{La24b}. To do so, we note that
\begin{eqnarray}\label{prop:vmlrmuco1} 
\lefteqn{
\left\|M_{lr}[\mu]\right\|_{V^{-1,\alpha}(\partial\Omega)}
}
\\ \nonumber
&& 
=\left\|V_\Omega\left[
M_{lr}[
\mu]-\frac{\langle M_{lr}[1],1\rangle}{\langle 1,1\rangle}1
 \right]
+\frac{\langle M_{lr}[1],1\rangle}{\langle 1,1\rangle}1
 \right\|_{C^{0,\alpha}(\partial\Omega)}
 \\ \nonumber
&& 
=\left\|V_\Omega\left[
M_{lr}[
\mu]\right] \right\|_{C^{0,\alpha}(\partial\Omega)}
 \\ \nonumber
&& 
=\left\|
\frac{\partial}{\partial x_r}v_\Omega^+[S_n,(\nu_\Omega)_l\mu] 
-\frac{\partial}{\partial x_l}v_\Omega^+[S_n,(\nu_\Omega)_r\mu]
 \right\|_{C^{0,\alpha}(\partial\Omega)}
 \quad\forall \mu\in C^{0,\alpha}(\partial\Omega)\,.
\end{eqnarray}
On the other hand the components of $\nu_\Omega$  belong to $C^{0,\alpha}(\partial\Omega)$ and  the pointwise product is continuous in $C^{0,\alpha}(\partial\Omega)$ and $v_\Omega^{+}[S_n,\cdot]$ is linear and continuous from
$C^{0,\alpha}(\partial\Omega)$ to $C^{1,\alpha}(\overline{\Omega})$ (cf.~\textit{e.g.}, \cite[Thm.~4.25]{DaLaMu21}). Hence, there exists a constant $c\in]0,+\infty[$ such that 
\begin{eqnarray}\label{prop:vmlrmuco2} 
\lefteqn{
\left\|
\frac{\partial}{\partial x_r}v_\Omega^+[S_n,(\nu_\Omega)_l\mu] 
-\frac{\partial}{\partial x_l}v_\Omega^+[S_n,(\nu_\Omega)_r\mu]
 \right\|_{C^{0,\alpha}(\partial\Omega)}
}
\\ \nonumber
&&\qquad\qquad\qquad
\leq	{\color{black}c} 	\|\mu\|_{C^{0,\alpha}(\partial\Omega)}
\qquad\forall \mu\in C^{0,\alpha}(\partial\Omega)\,.
\end{eqnarray}
Then \cite[Thm.~18.3 (ii)]{La24b} and inequalities (\ref{prop:vmlrmuco1}), (\ref{prop:vmlrmuco2}) imply that the linear operator $M_{lr}$ is continuous.\hfill  $\Box$ 

\vspace{\baselineskip}

Next, we note that if $n=3$ and $\mu$ is a H\"{o}lder continuous function on the boundary of $\Omega$, then   the tangential  gradient of $\mu$ has not been defined and thus we do not know what  $\nu_\Omega\bar{\wedge}{\mathrm{grad}}_{\partial\Omega}\mu$ means. 
We now show that there is a canonical way to extend the map $\nu_\Omega\bar{\wedge}  {\mathrm{grad}}_{\partial\Omega}\mu$ of the variable $\mu$ from the space of $C^1$ functions to the whole space of H\"{o}lder continuous functions on the boundary.
\begin{theorem}[of the tangential component]\label{thm:tanpargr}
 Let $\alpha\in]0,1[$. Let $\Omega$ be a bounded open set of ${\mathbb{R}}^3$ of class $C^{1,\alpha}$. There exists one and only one linear and continuous map
 $T_\Omega$  from $C^{0,\alpha}(\partial\Omega)$ to
 $V^{-1,\alpha}(\partial\Omega, {\mathbb{C}}^3)$ such that the following two conditions are satisfied.
 \begin{enumerate}
\item[(i)] The following equality holds
\begin{equation}\label{thm:tanpargr1}
T_\Omega[\mu]=\nu_\Omega\bar{\wedge}{\mathrm{grad}}_{\partial\Omega}\mu\qquad\forall \mu\in C^{1}(\partial\Omega)\,.
\end{equation}
\item[(ii)] If $\beta\in]0,\alpha[$, then $T_\Omega$ is continuous from 
$C^{0,\alpha}(\partial\Omega)$ with the norm of
$C^{0,\beta}(\partial\Omega)$ to $\left(C^{1,\alpha}(\partial\Omega, {\mathbb{C}}^3)\right)'$ with the weak$^*$ topology.
\end{enumerate}
Moreover,
\begin{equation}\label{thm:tanpargr2}
T_\Omega[\mu]=
M_{23}[\mu]e_1
-M_{13}[\mu] e_2+M_{12}[\mu] e_3\qquad \text{a.a.} \ \text{on}\ \partial\Omega\,,
\end{equation}
for all $\mu\in C^{0,\alpha}(\partial\Omega)$,
where $\{e_1,e_2,e_3\}$ is the canonical basis of ${\mathbb{R}}^3$. 
\end{theorem}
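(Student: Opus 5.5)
Existence will come from defining $T_\Omega$ directly by formula (\ref{thm:tanpargr2}):
\[
T_\Omega[\mu]\equiv M_{23}[\mu]e_1-M_{13}[\mu]e_2+M_{12}[\mu]e_3\qquad\forall\mu\in C^{0,\alpha}(\partial\Omega)\,,
\]
where $M_{lr}[\mu]$ is the distributional tangential derivative of Definition \ref{defn:tanderdis}. By Proposition \ref{prop:vmlrmuco}, each $M_{lr}$ is linear and continuous from $C^{0,\alpha}(\partial\Omega)$ to $V^{-1,\alpha}(\partial\Omega)$. Hence $T_\Omega$ is linear and continuous from $C^{0,\alpha}(\partial\Omega)$ to $V^{-1,\alpha}(\partial\Omega,{\mathbb{C}}^3)$.

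\emph{Condition (i).} Let $\mu\in C^1(\partial\Omega)$ and $\varphi\in C^{1,\alpha}(\partial\Omega)$. Lemma \ref{lem:gagre} gives $-\int_{\partial\Omega}\mu M_{lr}[\varphi]\,d\sigma=\int_{\partial\Omega}M_{lr}[\mu]\varphi\,d\sigma$, where on the right $M_{lr}[\mu]$ is the classical tangential derivative. So the distributional and classical $M_{lr}[\mu]$ coincide under the inclusion ${\mathcal{J}}$ of Lemma \ref{lem:caincl}. Lemma \ref{lem:taden=3} then yields (\ref{thm:tanpargr1}).

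\emph{Condition (ii).} Fix $\beta\in]0,\alpha[$ and $\varphi\in C^{1,\alpha}(\partial\Omega,{\mathbb{C}}^3)$. Each component of $\langle T_\Omega[\mu],\varphi\rangle$ is a sum of terms $\pm\langle M_{lr}[\mu],\varphi_k\rangle=\mp\int_{\partial\Omega}\mu M_{lr}[\varphi_k]\,d\sigma$. Since $M_{lr}[\varphi_k]\in C^{0}(\partial\Omega)$, each term is bounded by $c\sup|\mu|\le c\|\mu\|_{C^{0,\beta}(\partial\Omega)}$. So $\mu\mapsto\langle T_\Omega[\mu],\varphi\rangle$ is continuous for the $C^{0,\beta}$ norm, and this is exactly continuity into the weak$^*$ topology.

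\emph{Uniqueness.} Let $T'$ be another map with (i) and (ii), and fix $\mu\in C^{0,\alpha}(\partial\Omega)$. The main point is approximating $\mu$ by $C^1$ functions with control in the right norms. Lemma \ref{lem:aprbdma} with $m=0$ provides $\mu_j\in C^{1,\alpha}(\partial\Omega)\subseteq C^1(\partial\Omega)$ with $\sup_j\|\mu_j\|_{C^{0,\alpha}}<\infty$ and $\mu_j\to\mu$ in $C^{0,\beta}(\partial\Omega)$. By (i), $T'[\mu_j]=T_\Omega[\mu_j]$ for all $j$. By (ii), for each $\varphi\in C^{1,\alpha}(\partial\Omega,{\mathbb{C}}^3)$ we have $\langle T'[\mu_j],\varphi\rangle\to\langle T'[\mu],\varphi\rangle$, and likewise for $T_\Omega$. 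Hence $T'[\mu]$ and $T_\Omega[\mu]$ agree as elements of $(C^{1,\alpha}(\partial\Omega,{\mathbb{C}}^3))'$.

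The remaining subtlety is to get equality in $V^{-1,\alpha}(\partial\Omega,{\mathbb{C}}^3)$, not only in the dual. This holds because $V^{-1,\alpha}(\partial\Omega)$ is regarded as a subspace of $(C^{1,\alpha}(\partial\Omega))'$ via injective identifications, as in Remark \ref{rem:wtnotation} and \cite{La24b}. Equality as functionals therefore means equality in $V^{-1,\alpha}$. With this injectivity settled, the approximation argument gives uniqueness, and formula (\ref{thm:tanpargr2}) holds by construction.
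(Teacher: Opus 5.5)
Your proposal is correct and follows essentially the same route as the paper. You define $T_\Omega$ by (\ref{thm:tanpargr2}), get continuity from Proposition \ref{prop:vmlrmuco}, get (i) from Lemma \ref{lem:taden=3}, and prove uniqueness by approximating with Lemma \ref{lem:aprbdma} and passing to the weak$^*$ limit. The one deviation is that you verify (ii) by the direct bound $|\langle M_{lr}[\mu],\varphi\rangle|\le \sup_{\partial\Omega}|\mu|\,\|M_{lr}[\varphi]\|_{L^1(\partial\Omega)}$ from Definition \ref{defn:tanderdis}, whereas the paper applies Proposition \ref{prop:vmlrmuco} at exponent $\beta$. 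Your version is more elementary (it even gives continuity in the sup norm) and makes the limit on the $T_\Omega$ side of the uniqueness argument cleaner.
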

 {\bf Proof.} Let $T_\Omega$ be the operator from $C^{0,\alpha}(\partial\Omega)$ to
 $V^{-1,\alpha}(\partial\Omega)$ that is defined by equality (\ref{thm:tanpargr2}).  By Proposition \ref{prop:vmlrmuco}, we know that $T_\Omega$ is linear and continuous.  By 
 Lemma \ref{lem:taden=3}, $T_\Omega$ satisfies equality (\ref{thm:tanpargr1}) of condition (i). If $\beta\in ]0,\alpha[$, then $\Omega$ is of class $C^{1,\beta}$ and thus Proposition \ref{prop:vmlrmuco} implies that
 the right hand side of equality (\ref{thm:tanpargr2})  defines a linear and continuous map 
 $T_{\Omega,\beta}$ from 
 $C^{1,\beta}(\partial\Omega)$ to
 $V^{-1,\beta}(\partial\Omega)$, which is continuously embedded into $\left(C^{1,\beta}(\partial\Omega, {\mathbb{C}}^3)\right)'$ and accordingly into $\left(C^{1,\beta}(\partial\Omega, {\mathbb{C}}^3)\right)'$ with the weak$^*$ topology. By definition of $T_{\Omega,\beta}$, we have 
 \[
\langle T_\Omega[\mu],v\rangle=\langle T_{\Omega,\beta}[\mu],v\rangle\qquad\forall v\in C^{1,\alpha}(\partial\Omega, {\mathbb{C}}^3)
\subseteq C^{1,\beta}(\partial\Omega, {\mathbb{C}}^3)\,,
\]
 for all $\mu\in C^{1,\alpha}(\partial\Omega)\left(\subseteq
 C^{1, \beta}(\partial\Omega)
 \right)$. Thus  if $v\in C^{1,\alpha}(\partial\Omega, {\mathbb{C}}^3)$, the map $\langle T_{\Omega,\beta}[\cdot],v\rangle$ is continuous from $C^{1,\beta}(\partial\Omega)$  to ${\mathbb{C}}$ and   
 \[
 \langle T_{\Omega} [\cdot],v\rangle= \langle T_{\Omega,\beta}[\cdot]_{|C^{1,\alpha}(\partial\Omega, {\mathbb{C}}^3)},v\rangle
 \]
   is continuous from $C^{1,\alpha}(\partial\Omega)$ with the norm of $C^{1,\beta}(\partial\Omega)$ to ${\mathbb{C}}$ and condition (ii) holds true. 
   
We now assume that the linear operator $\tilde{T}_\Omega$  from $C^{0,\alpha}(\partial\Omega)$ to
 $V^{-1,\alpha}(\partial\Omega)$ satisfies conditions (i) and (ii). Then we assume that  $\mu\in 
 C^{0,\alpha}(\partial\Omega)$ and we turn to prove that
 $\tilde{T}_\Omega[\mu]=\tilde{T}_\Omega[\mu]$.  By the  known approximation Lemma \ref{lem:aprbdma}, there exists a sequence
$\{\mu_j\}_{j\in {\mathbb{N}}}$ in $C^{1,\alpha}(\partial\Omega)$ such that
\[
\sup_{j\in {\mathbb{N}}}\|\mu_j\|_{C^{0,\alpha}(\partial\Omega)}<+\infty\,,
\qquad
\lim_{j\to\infty}\mu_j=\mu\quad\text{in}\ C^{0,\beta}(\partial\Omega)\quad\forall \beta\in]0,\alpha[\,.
\]
 Now let $v\in C^{1,\alpha}(\partial\Omega, {\mathbb{C}}^3)$. Since $\tilde{T}_\Omega$ satisfies condition (ii), we  have
\begin{equation}\label{thm:tanpargr3}
\langle \tilde{T}_\Omega[\mu],v\rangle=\lim_{j\to\infty}\langle \tilde{T}_\Omega[\mu_j],v\rangle\,.
\end{equation}
Then condition (i) and Lemma \ref{lem:taden=3} imply that
\begin{eqnarray}\label{thm:tanpargr4}
\lefteqn{
\lim_{j\to\infty}\langle \tilde{T}_\Omega[\mu_j],v\rangle
=\lim_{j\to\infty}\langle \nu_\Omega\bar{\wedge}{\mathrm{grad}}_{\partial\Omega}\mu_j,v\rangle
}
\\ \nonumber
&&\qquad 
=\lim_{j\to\infty}\langle M_{23}[\mu_j]e_1
-M_{13}[\mu_j] e_2+M_{12}[\mu_j] e_3,v\rangle
\end{eqnarray} 
 and thus Proposition \ref{prop:vmlrmuco} implies that 
\begin{eqnarray}\label{thm:tanpargr5}
\lefteqn{
\lim_{j\to\infty}\langle M_{23}[\mu_j]e_1
-M_{13}[\mu_j] e_2+M_{12}[\mu_j] e_3,v\rangle
}
\\ \nonumber
&&\qquad
= \langle M_{23}[\mu]e_1
-M_{13}[\mu] e_2+M_{12}[\mu] e_3,v\rangle
=\langle T_\Omega[\mu],v\rangle\,.
\end{eqnarray}
Hence, the limiting relations (\ref{thm:tanpargr3})--(\ref{thm:tanpargr5}) imply that
\[
\langle \tilde{T}_\Omega[\mu],v\rangle=\langle T_\Omega[\mu],v\rangle
\qquad\forall v\in  C^{1,\alpha}(\partial\Omega, {\mathbb{C}}^3)
\]
and accordingly that  $\tilde{T}_\Omega[\mu]=T_\Omega[\mu]$. Then  the proof is complete.\hfill  $\Box$ 

\vspace{\baselineskip}

By Theorem \ref{thm:tanpargr}, $T_\Omega[\mu]$ coincides with  $\nu_\Omega\bar{\wedge}{\mathrm{grad}}_{\partial\Omega}\mu$ if $\mu$ is of class $C^1$. By abuse of notation $T_\Omega[\mu]$ is sometimes written as $\nu_\Omega\bar{\wedge}{\mathrm{grad}}_{\partial\Omega}\mu$ or even as $\nu_\Omega\bar{\wedge}\nabla\mu$ also in case $\mu$ is of class $C^{0,\alpha}$. Then we can deduce the validity of the following immediate corollary of Theorem \ref{thm:tanpargr}. 
\begin{corollary}\label{cor:nuwgrfc}
 Let $m\in {\mathbb{N}}$. Let $\alpha\in]0,1[$. Let $\Omega$ be a bounded open set of ${\mathbb{R}}^3$ of class $C^{\max\{1,m\},\alpha}$. Then the map from 
  $C^{m,\alpha}(\partial\Omega)$ to $V^{m-1,\alpha}(\partial\Omega, {\mathbb{C}}^3)$ that takes $\mu$ to $T_\Omega[\mu]$ is linear and continuous (see (\ref{eq:vm-1a}) for the definition of $V^{m-1,\alpha}(\partial\Omega)$).
\end{corollary}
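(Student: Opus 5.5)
The proof splits into the two cases built into definition (\ref{eq:vm-1a}) of $V^{m-1,\alpha}(\partial\Omega)$.

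\emph{Case $m=0$.} Here $\Omega$ is of class $C^{1,\alpha}$ and $V^{-1,\alpha}(\partial\Omega,{\mathbb{C}}^3)$ is the target space of Theorem \ref{thm:tanpargr}. That theorem already provides $T_\Omega$ as a linear and continuous map from $C^{0,\alpha}(\partial\Omega)$ to $V^{-1,\alpha}(\partial\Omega,{\mathbb{C}}^3)$. Equivalently, by formula (\ref{thm:tanpargr2}) each component of $T_\Omega[\mu]$ is $\pm M_{lr}[\mu]$, and Proposition \ref{prop:vmlrmuco} gives continuity of each $M_{lr}$. So this case needs no further argument.

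\emph{Case $m\geq 1$.} Now $\Omega$ is of class $C^{m,\alpha}$ and $\mu\in C^{m,\alpha}(\partial\Omega)\subseteq C^{1}(\partial\Omega)$. By condition (i) of Theorem \ref{thm:tanpargr} and Lemma \ref{lem:taden=3}, $T_\Omega[\mu]$ is the classical function
\[
M_{23}[\mu]e_1-M_{13}[\mu]e_2+M_{12}[\mu]e_3\,,
\]
with $M_{lr}[\mu]=(\nu_\Omega)_l\partial_{x_r}\tilde{\mu}-(\nu_\Omega)_r\partial_{x_l}\tilde{\mu}$ as in (\ref{mlr}). It therefore suffices to show that each classical $M_{lr}$ is linear and continuous from $C^{m,\alpha}(\partial\Omega)$ to $C^{m-1,\alpha}(\partial\Omega)$. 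I will do this in three steps.

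\begin{enumerate}
\item[(a)] Choose a linear and continuous extension operator from $C^{m,\alpha}(\partial\Omega)$ to $C^{m,\alpha}(\overline{\Omega})$. Its existence is the fact \cite[2.72]{DaLaMu21} already used in Lemma \ref{lem:aprbdma}, read with the continuity of the extension, which I expect is part of that reference. Take $\tilde{\mu}$ to be the extension of $\mu$; by the independence remark after (\ref{mlr}), $M_{lr}[\mu]$ may be computed from $\tilde{\mu}$, even though $\tilde{\mu}$ is defined only on $\overline{\Omega}$.
\item[(b)] Differentiate and restrict: $\partial_{x_r}\tilde{\mu}$ lies in $C^{m-1,\alpha}(\overline{\Omega})$, and its restriction to $\partial\Omega$ lies in $C^{m-1,\alpha}(\partial\Omega)$, both maps being continuous.
\item[(c)] Multiply by the normal: $\Omega$ is of class $C^{m,\alpha}$, so $\nu_\Omega\in C^{m-1,\alpha}(\partial\Omega,{\mathbb{R}}^3)$, and pointwise multiplication is continuous bilinear on $C^{m-1,\alpha}(\partial\Omega)$.
\end{enumerate}

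Composing (a), (b) and (c) gives the continuity of $M_{lr}$, hence of $T_\Omega$, from $C^{m,\alpha}(\partial\Omega)$ to $C^{m-1,\alpha}(\partial\Omega,{\mathbb{C}}^3)$.

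The main point to verify is consistency: for $m\geq 1$, the element $T_\Omega[\mu]$ of $V^{-1,\alpha}$ must coincide, under the identification ${\mathcal{J}}$ of Lemma \ref{lem:caincl}, with the classical $C^{m-1,\alpha}$ function just built. This holds by (\ref{thm:tanpargr1}) for $C^1$ densities, together with Lemma \ref{lem:gagre}, which shows that the distributional $M_{lr}$ of Definition \ref{defn:tanderdis} agrees with the classical one on $C^1$ functions. Beyond this, the delicate inputs are only the boundedness of the extension operator and the regularity $\nu_\Omega\in C^{m-1,\alpha}$, both standard.
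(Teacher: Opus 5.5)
Your proposal is correct and follows the paper's proof. You use the same case split: $m=0$ comes directly from Theorem~\ref{thm:tanpargr}, and $m\geq 1$ comes from Theorem~\ref{thm:tanpargr}~(i), Lemma~\ref{lem:taden=3} and the continuity of the classical $M_{lr}$ from $C^{m,\alpha}(\partial\Omega)$ to $C^{m-1,\alpha}(\partial\Omega)$. The only difference is that the paper cites that last continuity as known, whereas you derive it via extension, differentiation, restriction and multiplication by $\nu_\Omega$; this is fine, and the $C^1$ extension of $\tilde{\mu}$ beyond $\overline{\Omega}$ that (\ref{mlr}) formally requires is the one noted right before (\ref{lem:taden=31a}).
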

{\bf Proof.} If $m=0$, the statement follows by  Theorem \ref{thm:tanpargr}. If instead $m\geq 1$, then the statement follows by Theorem \ref{thm:tanpargr} (i), by
Lemma \ref{lem:taden=3} and by the known continuity of the tangential derivatives $M_{lj}$ from $C^{m,\alpha}(\partial\Omega)$ to $C^{m-1,\alpha}(\partial\Omega)$ for all $l,j\in\{1,2,3\}$.\hfill  $\Box$ 

\vspace{\baselineskip}

We now turn to introduce the tangential divergence of a distributional tangential vector field.
\begin{definition}\label{defn:tandiv}
 Let $\Omega$ be a bounded open   subset of ${\mathbb{R}}^{3}$ of class $C^{1}$. 
 Let $a\in C^0(\partial\Omega,  {\mathbb{C}}^3)$.   Let
 \[
 a (x)\cdot\nu_\Omega(x)=0\qquad \forall x\in \partial\Omega\,.
 \]
Then the weak tangential divergence ${\mathrm{div}}_{\partial\Omega}a$ of $a$ is the (distribution) of $\left(C^1(\partial\Omega  ) \right)'$ that is defined by the equality
\begin{equation}\label{defn:tandiv1}
\langle {\mathrm{div}}_{\partial\Omega}a,\varphi\rangle=
-\int_{\partial\Omega}{\mathrm{grad}}_{\partial\Omega}\varphi(x)\cdot a(x)\,d\sigma_x\qquad\forall \varphi\in C^1(\partial\Omega  )	\,,
\end{equation}
(see (\ref{defn:tangrad}) for the definition of tangential gradient.)
\end{definition}
Then we prove the following continuity statement for the tangential divergence that extends the corresponding classical result in the sense that it applies to  H\"{o}lder continuous functions, with no assumptions on their tangential derivatives.
\begin{proposition}\label{prop:tandivco}
 Let $\alpha\in]0,1[$. Let $\Omega$ be a bounded open   subset of ${\mathbb{R}}^{n}$ of class $C^{1,\alpha}$. Then the map ${\mathrm{div}}_{\partial\Omega}$ is linear and continuous from the closed subspace 
 \[
 C^{0,\alpha}_t(\partial\Omega,{\mathbb{C}}^n)=\left\{
 a\in C^{0,\alpha}(\partial\Omega,  {\mathbb{C}}^n):\,  a(x)\cdot\nu_\Omega(x)=0\,,\ 
   \forall x\in \partial\Omega
 \right\}
\]
of $ C^{0,\alpha}_t(\partial\Omega,{\mathbb{C}}^n)$
to $V^{-1,\alpha}(\partial\Omega)$. Moreover,
\begin{equation}\label{prop:tandivco1}
{\mathrm{div}}_{\partial\Omega}a=\sum_{l,r=1}^nM_{lr}[a_r(\nu_\Omega)_l]\qquad\forall
a\in C^{0,\alpha}_t(\partial\Omega,{\mathbb{C}}^n)\,.
\end{equation}
\end{proposition}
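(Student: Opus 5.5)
The plan is to establish formula (\ref{prop:tandivco1}) first, in the sense of $\left(C^{1,\alpha}(\partial\Omega)\right)'$. Continuity will then follow from Proposition \ref{prop:vmlrmuco} together with the continuity of pointwise products.

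\emph{Step 1: the identity.} Let $a\in C^{0,\alpha}_t(\partial\Omega,{\mathbb{C}}^n)$ and $\varphi\in C^{1,\alpha}(\partial\Omega)$. By (\ref{defn:tangrad}), with $e_r$ the canonical basis,
\[
{\mathrm{grad}}_{\partial\Omega}\varphi\cdot a=\sum_{l,r=1}^n M_{lr}[\varphi](\nu_\Omega)_l a_r\,.
\]
Since $(\nu_\Omega)_l a_r\in C^{0,\alpha}(\partial\Omega)$, Definition \ref{defn:tandiv} and Definition \ref{defn:tanderdis} give
\[
\langle {\mathrm{div}}_{\partial\Omega}a,\varphi\rangle=-\sum_{l,r=1}^n\int_{\partial\Omega}(\nu_\Omega)_la_r\,M_{lr}[\varphi]\,d\sigma=\sum_{l,r=1}^n\langle M_{lr}[(\nu_\Omega)_la_r],\varphi\rangle\,.
\]
In the second equality, the integral pairing of $(\nu_\Omega)_l a_r$ with $M_{lr}[\varphi]$ is exactly the action of the distribution ${\mathcal{J}}[(\nu_\Omega)_l a_r]$ of Lemma \ref{lem:caincl}. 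This proves (\ref{prop:tandivco1}) when both sides are regarded as elements of $\left(C^{1,\alpha}(\partial\Omega)\right)'$.

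There is one point to check here. Definition \ref{defn:tandiv} defines ${\mathrm{div}}_{\partial\Omega}a$ on $C^1(\partial\Omega)$, so we must make sure that restricting to $C^{1,\alpha}(\partial\Omega)$ loses no information. The right-hand side of (\ref{defn:tandiv1}) is continuous in $\varphi$ for the $C^1$ norm. In addition, $C^{1,\alpha}(\partial\Omega)$, or at least the restrictions of smooth functions, is dense in $C^1(\partial\Omega)$ in the $C^1$ norm; this can be seen via Lemma \ref{lem:apr1a}-type approximation, or directly by mollifying a $C^1$ extension. Hence the functional is determined by its values on $C^{1,\alpha}(\partial\Omega)$, and ${\mathrm{div}}_{\partial\Omega}a$ may be identified with an element of $\left(C^{1,\alpha}(\partial\Omega)\right)'$.

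\emph{Step 2: membership and continuity.} The map $a\mapsto a_r(\nu_\Omega)_l$ is linear and continuous from $C^{0,\alpha}_t(\partial\Omega,{\mathbb{C}}^n)$ to $C^{0,\alpha}(\partial\Omega)$. This holds because $\nu_\Omega\in C^{0,\alpha}(\partial\Omega,{\mathbb{R}}^n)$, since $\Omega$ is of class $C^{1,\alpha}$, and because pointwise multiplication is bilinear and continuous on $C^{0,\alpha}(\partial\Omega)$. By Proposition \ref{prop:vmlrmuco}, each $M_{lr}$ is linear and continuous from $C^{0,\alpha}(\partial\Omega)$ to $V^{-1,\alpha}(\partial\Omega)$. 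Composing these maps and summing over $l,r$, the right-hand side of (\ref{prop:tandivco1}) depends continuously on $a$ with values in $V^{-1,\alpha}(\partial\Omega)$.

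Since $V^{-1,\alpha}(\partial\Omega)$ embeds injectively into $\left(C^{1,\alpha}(\partial\Omega)\right)'$, Step 1 shows that ${\mathrm{div}}_{\partial\Omega}a$ is this element of $V^{-1,\alpha}(\partial\Omega)$. The continuity of ${\mathrm{div}}_{\partial\Omega}$ follows. Finally, $C^{0,\alpha}_t$ is closed because the condition $a\cdot\nu_\Omega=0$ is preserved under uniform convergence, and convergence in $C^{0,\alpha}$ implies uniform convergence.

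The main obstacle is Proposition \ref{prop:vmlrmuco}, which is assumed here. Beyond it, the only delicate point is the identification in Step 1 between the $\left(C^1\right)'$-distribution of Definition \ref{defn:tandiv} and an element of $\left(C^{1,\alpha}\right)'$, which is handled by the density argument above.
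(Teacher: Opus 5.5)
Your proposal is correct and follows the paper's proof. The paper also obtains (\ref{prop:tandivco1}) on $C^{1,\alpha}(\partial\Omega)$ by expanding ${\mathrm{grad}}_{\partial\Omega}\varphi$ via (\ref{defn:tangrad}) and applying Definition \ref{defn:tanderdis}, and gets continuity by composing multiplication by $(\nu_\Omega)_l$ with Proposition \ref{prop:vmlrmuco}. Your density argument, which identifies the $\left(C^{1}(\partial\Omega)\right)'$ functional with its restriction to $C^{1,\alpha}(\partial\Omega)$, makes explicit a step the paper leaves tacit.
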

{\bf Proof.} Since the components of $\nu_\Omega$ belong to $C^{0,\alpha}(\partial\Omega)$ and the pointwise product is continuous in $C^{0,\alpha}(\partial\Omega)$, the space $C^{0,\alpha}_t(\partial\Omega,{\mathbb{C}}^n)$ is closed in $C^{0,\alpha}(\partial\Omega,{\mathbb{C}}^n)$ and 
the map from 
$C^{0,\alpha}_t(\partial\Omega,{\mathbb{C}}^n)$ to $C^{0,\alpha}(\partial\Omega)$ that takes $a$ to $a_r(\nu_\Omega)_l$ is linear and continuous for all $r$, $l\in \{1,\dots,n\}$. Hence, Proposition \ref{prop:vmlrmuco} implies that the right hand side of formula (\ref{prop:tandivco1}) defines a linear and continuous map from 
$C^{0,\alpha}_t(\partial\Omega,{\mathbb{C}}^n)$ to $V^{-1,\alpha}(\partial\Omega)$. Hence, it suffices to show the validity of formula (\ref{prop:tandivco1}).  To do so, we note that
 \begin{eqnarray}\label{prop:tandivco2}
 \lefteqn{
\langle {\mathrm{div}}_{\partial\Omega}a,\varphi\rangle=
-\int_{\partial\Omega}a\cdot {\mathrm{grad}}_{\partial\Omega}\varphi 	\,d\sigma 
}
\\ \nonumber
&&\qquad\qquad
= -\sum_{l,r=1}^n\int_{\partial\Omega}a_rM_{lr}[\varphi](\nu_\Omega)_l\,d\sigma
=-\sum_{l,r=1}^n\langle a_r(\nu_\Omega)_l,M_{lr}[\varphi]\rangle
 \\ \nonumber
&&\qquad\qquad
=\sum_{l,r=1}^n\langle  M_{lr}[ a_r(\nu_\Omega)_l],\varphi\rangle
 \qquad\forall \varphi\in C^{1,\alpha}(\partial\Omega)\,,
\end{eqnarray}
(cf. (\ref{defn:tangrad}), (\ref{defn:tanderdis1})).\hfill  $\Box$ 

\vspace{\baselineskip}
Next we introduce the following distributional  form of a known classical technical statement that we need later (cf.~Kirsch and Hettlich \cite[Cor.~A.20]{KiHe15}).
\begin{lemma}\label{lem:nucuv}
 Let $\alpha\in]0,1[$. Let $\Omega$ be a bounded open subset of ${\mathbb{R}}^3$ of class $C^{1,\alpha}$. Let $a\in C^{0,\alpha}(\overline{\Omega},{\mathbb{C}}^3)$,  then
 \begin{equation}\label{lem:nucuv1}
 -{\mathrm{div}}_{\partial\Omega} (\nu_\Omega\bar{\wedge}a)=
 M_{23}[a_1]-M_{13}[a_2]+M_{12}[a_3] \qquad\text{on}\ \partial\Omega\,.
 \end{equation}
 If we further assume that  ${\mathrm{curl}}\,a\in C^{0,\alpha}(\overline{\Omega},{\mathbb{C}}^3)$, then we also have
 \begin{equation}\label{lem:nucuv1b}
 M_{23}[a_1]-M_{13}[a_2]+M_{12}[a_3]=\nu_\Omega\cdot {\mathrm{curl}}\,a\quad\text{on}\ \partial\Omega\,.
 \end{equation}
 \end{lemma}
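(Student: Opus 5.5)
Both identities will be proved in the sense of $(C^{1,\alpha}(\partial\Omega))'$, by pairing with an arbitrary test function $\varphi\in C^{1,\alpha}(\partial\Omega)$.

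\textbf{Identity (\ref{lem:nucuv1}).} Here $a$ is read through its trace $a_{|\partial\Omega}\in C^{0,\alpha}(\partial\Omega,{\mathbb{C}}^3)$. The field $\nu_\Omega\bar{\wedge}a$ is $C^{0,\alpha}$ and tangential, so ${\mathrm{div}}_{\partial\Omega}(\nu_\Omega\bar{\wedge}a)$ is well defined by Definition \ref{defn:tandiv}, and it lies in $V^{-1,\alpha}(\partial\Omega)$ by Proposition \ref{prop:tandivco}. By (\ref{defn:tandiv1}) and the scalar triple product identity,
\[
-\langle {\mathrm{div}}_{\partial\Omega}(\nu_\Omega\bar{\wedge}a),\varphi\rangle
=\int_{\partial\Omega}{\mathrm{grad}}_{\partial\Omega}\varphi\cdot(\nu_\Omega\bar{\wedge}a)\,d\sigma
=-\int_{\partial\Omega}a\cdot(\nu_\Omega\bar{\wedge}{\mathrm{grad}}_{\partial\Omega}\varphi)\,d\sigma .
\]
Lemma \ref{lem:taden=3} turns the last integrand into $a_1M_{23}[\varphi]-a_2M_{13}[\varphi]+a_3M_{12}[\varphi]$. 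Definition \ref{defn:tanderdis} then moves each $M_{lr}$ onto $a_j$ with a sign change, which cancels the leading minus. This gives $\langle M_{23}[a_1]-M_{13}[a_2]+M_{12}[a_3],\varphi\rangle$ and proves (\ref{lem:nucuv1}) as an identity in $(C^{1,\alpha}(\partial\Omega))'$. The only care needed is the sign bookkeeping in $x\cdot(y\bar{\wedge}z)=-y\cdot(x\bar{\wedge}z)$.

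\textbf{Identity (\ref{lem:nucuv1b}), strategy.} Now assume ${\mathrm{curl}}\,a\in C^{0,\alpha}$. The plan is to extend $\varphi$ to $\tilde\varphi\in C^{1,\alpha}(\overline{\Omega})$ and apply Theorem \ref{thm:veiniddis} (iii), with $A=a$ and $v=\tilde\varphi$:
\[
\langle E^\sharp_\Omega[{\mathrm{curl}}\,a],\tilde\varphi\rangle-\int_\Omega a\bar{\wedge}\nabla\tilde\varphi\,dx=\int_{\partial\Omega}(\nu_\Omega\bar{\wedge}a)\tilde\varphi\,d\sigma .
\]
Since ${\mathrm{curl}}\,a\in C^{0,\alpha}$, Proposition \ref{prop:nschext} (\ref{prop:nschext3}) lets us replace the first term by $\int_\Omega{\mathrm{curl}}\,a\,\tilde\varphi\,dx$. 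Next apply Proposition \ref{prop:gendidis} to the $C^{0,\alpha}$ vector field $a\bar{\wedge}\nabla\tilde\varphi$ with test function $1$, or more simply Theorem \ref{thm:div0a}. The goal is to show
\[
\int_\Omega {\mathrm{curl}}\,a\cdot\nabla\tilde\varphi\,dx=\int_{\partial\Omega}(\nu_\Omega\bar{\wedge}a)\cdot\nabla\tilde\varphi\,d\sigma ,
\]
whose right side equals $-\int_{\partial\Omega}a\cdot(\nu_\Omega\bar{\wedge}\nabla\tilde\varphi)\,d\sigma$, i.e.\ the pairing computed in the first part. A cleaner route is Theorem \ref{thm:veiniddis} (ii) with $v=\nabla\tilde\varphi$, because ${\mathrm{curl}}\,\nabla\tilde\varphi=0$. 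However, this requires $\nabla\tilde\varphi\in C^{1,\alpha}$, so we take $\tilde\varphi\in C^{2,\alpha}$.

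\textbf{Main obstacle.} The difficulty is regularity. When $\Omega$ is only $C^{1,\alpha}$, a $C^{1,\alpha}$ boundary function need not extend to $C^{2,\alpha}(\overline\Omega)$. The fix is to first take $\varphi$ as the restriction of some $\Phi\in{\mathcal{D}}({\mathbb{R}}^3)$. Theorem \ref{thm:veiniddis} (ii) with $v=\nabla\Phi$ and Proposition \ref{prop:nschext} (\ref{prop:nschext3}) then give
\[
\int_\Omega{\mathrm{curl}}\,a\cdot\nabla\Phi\,dx=\int_{\partial\Omega}(\nu_\Omega\bar{\wedge}a)\cdot\nabla\Phi\,d\sigma .
\]
The left side equals $\langle E^\sharp_\Omega[{\mathrm{div}}\,{\mathrm{curl}}\,a],\Phi\rangle$ corrected by the boundary term $\int_{\partial\Omega}\nu_\Omega\cdot{\mathrm{curl}}\,a\,\Phi\,d\sigma$, via Proposition \ref{prop:gendidis}. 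Here ${\mathrm{div}}\,{\mathrm{curl}}\,a=0$ in ${\mathcal{D}}'(\Omega)$, but we need the vanishing of its extension $E^\sharp_\Omega$, not just of the distribution. To get it, write ${\mathrm{div}}\,{\mathrm{curl}}\,a$ via $f_j=({\mathrm{curl}}\,a)_j$. This expresses $\langle E^\sharp[{\mathrm{div}}\,{\mathrm{curl}}\,a],\Phi\rangle$ through $\int_\Omega{\mathrm{curl}}\,a\cdot\nabla\Phi$ directly, and the identity closes. Combining with the first part gives $\langle M_{23}[a_1]-M_{13}[a_2]+M_{12}[a_3],\varphi\rangle=\int_{\partial\Omega}\nu_\Omega\cdot{\mathrm{curl}}\,a\,\varphi\,d\sigma$ for such $\varphi$.

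To pass from restrictions of ${\mathcal{D}}({\mathbb{R}}^3)$ to all $\varphi\in C^{1,\alpha}(\partial\Omega)$, approximate $\varphi$ by such restrictions in the norm of $C^{1,\beta}(\partial\Omega)$ with $\beta<\alpha$, using Lemma \ref{lem:aprbdma} together with mollified extensions. Both sides are continuous in that norm: the left side because $a_j\in C^{0,\alpha}$ gives a continuous functional on $C^{1,\beta}$ after applying $M_{lr}$, and the right side trivially. This density step is where I expect the real work.
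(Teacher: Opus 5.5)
Your proof is correct once one step is justified properly, and for (\ref{lem:nucuv1b}) it takes a different route from the paper; for (\ref{lem:nucuv1}) you argue exactly as the paper does.

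\textbf{Comparison for (\ref{lem:nucuv1b}).} The paper uses \cite[Thm.~17.5]{La24b}: an element of $V^{-1,\alpha}(\partial\Omega)$ is determined by its single layer potential in $\Omega$. So it only needs to test against functions $\psi_x\in C^\infty_c$ that agree with $S_3(x-\cdot)$ near $\partial\Omega$. It then proves the tested identity by approximating $a$ with smooth fields $a_j$ (Lemma \ref{lem:apr1a}) and applying the classical identity (\ref{lem:nucuv4}) to $a_j$. It passes to the limit using the continuity of $E^\sharp_\Omega$ on $C^{-1,\beta}(\overline{\Omega})$ and of $M_{lr}$ from $C^{0,\beta}(\partial\Omega)$ to $V^{-1,\beta}(\partial\Omega)$ (Proposition \ref{prop:vmlrmuco}). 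You keep $a$ fixed and obtain the tested identity for every $\Phi\in\mathcal{D}(\mathbb{R}^3)$ from two distributional Green formulas already in the paper:
Theorem \ref{thm:veiniddis} (ii) with $v=\nabla\Phi$, whose curl vanishes, and Proposition \ref{prop:gendidis} with $A=\mathrm{curl}\,a$. You then remove the restriction on the test function by density instead of by single-layer uniqueness. Your route is more self-contained: it needs no $V^{-1,\alpha}$ theory and no approximation of $a$. The paper's route reuses machinery it has already built and needs only the special test functions $\psi_x$.

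\textbf{The step that needs fixing.} Your worry that $\mathrm{div}\,\mathrm{curl}\,a=0$ in $\mathcal{D}'(\Omega)$ does not yet give $E^\sharp_\Omega[\mathrm{div}\,\mathrm{curl}\,a]=0$ is unfounded. Moreover, the fix you propose is circular: writing $\mathrm{div}\,\mathrm{curl}\,a$ with $f_j=(\mathrm{curl}\,a)_j$ just reproduces Proposition \ref{prop:gendidis}. The vanishing comes directly from Proposition \ref{prop:nschext}. $E^\sharp_\Omega$ is a linear map on the subspace $C^{-1,\alpha}(\overline{\Omega})$ of $\mathcal{D}'(\Omega)$, and (\ref{prop:nschext2}) holds for every representation, including $f_0=f_1=f_2=f_3=0$. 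Hence $E^\sharp_\Omega[\mathrm{div}\,\mathrm{curl}\,a]=0$, which gives $\int_\Omega \mathrm{curl}\,a\cdot\nabla\Phi\,dx=\int_{\partial\Omega}\nu_\Omega\cdot\mathrm{curl}\,a\,\Phi\,d\sigma$. The paper relies on the same independence of representation in (\ref{lem:nucuv5}).

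\textbf{The density step is easier than you expect.} The left-hand pairing equals $-\int_{\partial\Omega}(a_1M_{23}[\varphi]-a_2M_{13}[\varphi]+a_3M_{12}[\varphi])\,d\sigma$, and the right-hand side is $\int_{\partial\Omega}\nu_\Omega\cdot\mathrm{curl}\,a\,\varphi\,d\sigma$. Both are continuous under uniform convergence of $\varphi$ and of $M_{lr}[\varphi]$. Take a compactly supported $C^1$ extension of $\varphi$ to $\mathbb{R}^3$, which exists because $\Omega$ is of class $C^{1,\alpha}$, and mollify it. The restrictions of the mollified functions converge in exactly this sense. You do not need Lemma \ref{lem:aprbdma} or $C^{1,\beta}$ norms.
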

{\bf Proof.} By Definition \ref{defn:tandiv} of distributional tangential divergence, by  Lemma  \ref{lem:taden=3}
 on the tangential component of the gradient and Definition \ref{defn:tanderdis} of tangential derivative of a distribution, we have
\begin{eqnarray*} 
\lefteqn{
-\langle {\mathrm{div}}_{\partial\Omega} (\nu_\Omega\bar{\wedge}a),\varphi\rangle
}
\\ \nonumber
&&\qquad 
=\int_{\partial\Omega} (\nu_\Omega\bar{\wedge}a)\cdot {\mathrm{grad}}_{\partial\Omega}\varphi 
\,d\sigma
=-\int_{\partial\Omega} (\nu_\Omega\bar{\wedge}
{\mathrm{grad}}_{\partial\Omega}\varphi 
)\cdot a
\,d\sigma
 \\ \nonumber
&&\qquad 
=-\int_{\partial\Omega}(M_{23}[\phi]e_1
-M_{13}[\phi] e_2+M_{12}[\phi] e_3)\cdot a \,d\sigma
\\ \nonumber
&&\qquad 
=\langle M_{23}[a_1] 
-M_{13}[a_2] +M_{12}[a_3]  ,\varphi \rangle 
\qquad\forall\varphi\in C^{1,\alpha}(\partial\Omega)\,.
\end{eqnarray*}
 and thus the equality of (\ref{lem:nucuv1}) holds true. We now further assume that  ${\mathrm{curl}}\,a\in C^{0,\alpha}(\overline{\Omega},{\mathbb{C}}^3)$ and turn to prove equality (\ref{lem:nucuv1b}). By Proposition \ref{prop:vmlrmuco}, the left hand side of  equality   (\ref{lem:nucuv1b}) belongs  to the space
$V^{-1,\alpha}(\partial\Omega,{\mathbb{C}}^3)$. By our assumptions on $a$, by the membership in $C^{0,\alpha}(\partial\Omega)$ of the components of $\nu_\Omega$ and by the continuity of the pointwise product in $C^{0,\alpha}(\partial\Omega)$, we know that the right hand side of   equality    (\ref{lem:nucuv1b}) belongs to
$C^{0,\alpha}(\partial\Omega)$, which in turn is contained in $V^{-1,\alpha}(\partial\Omega,{\mathbb{C}}^3)$. By \cite[Thm.~17.5]{La24b},   equality  (\ref{lem:nucuv1b}) follows provided that we can prove that
\begin{eqnarray}\label{lem:nucuv2}
\lefteqn{\langle  
 M_{23}[a_1](y)-M_{13}[a_2](y)+M_{12}[a_3](y) , S_3(x-y)\rangle
}
\\ \nonumber
&&
\qquad\qquad\qquad=\int_{\partial\Omega}S_3(x-y)
\nu_\Omega(y)\cdot {\mathrm{curl}}\,a(y)\,d\sigma \qquad\forall x\in\Omega\,.
\end{eqnarray}
Indeed, the single layer of an element of $V^{-1,\alpha}(\partial\Omega)$ has a continuous extension from $\Omega$ to $\overline{\Omega}$ (cf.  
\cite[Defn.~15.10, Thm.~18.1]{La24b}). To do so, we now fix $x\in \Omega$, we take a function $\phi_x\in C^\infty_c({\mathbb{R}}^n)$ that vanishes in an open neighborhood of $x$ and that equals $1$ in an open neighborhood of $\partial\Omega$. Then we set
\[
\psi_x(y)\equiv \phi_x (y)S_n(x-y)\quad\forall y\in {\mathbb{R}}^n\setminus\{x\}\,,
\quad
\psi_x(x)\equiv 0\,.
\]
Clearly, $\psi_x\in C^\infty_c({\mathbb{R}}^n)$ and it suffices to show that
\begin{eqnarray}\label{lem:nucuv3}
\langle
 M_{23}[a_1] -M_{13}[a_2] +M_{12}[a_3]  , \psi_x \rangle
=\int_{\partial\Omega}(\nu_\Omega \cdot {\mathrm{curl}}\,a)\psi_x\,d\sigma\,.
\end{eqnarray}
We first note that 
 \begin{eqnarray}\label{lem:nucuv4}
\lefteqn{M_{23}[b_1] -M_{13}[b_2] +M_{12}[b_3]
}
\\ \nonumber
&& 
=(\nu_\Omega)_2\frac{\partial b_1}{\partial x_3}-(\nu_\Omega)_3\frac{\partial b_1}{\partial x_2}
-\left(
(\nu_\Omega)_1\frac{\partial b_2}{\partial x_3}-(\nu_\Omega)_3\frac{\partial b_2}{\partial x_1}
\right)
\\ \nonumber
&&\qquad\qquad\qquad\qquad 
+(\nu_\Omega)_1\frac{\partial b_3}{\partial x_2}-(\nu_\Omega)_2\frac{\partial b_3}{\partial x_1}
\\ \nonumber
&& 
=(\nu_\Omega)_1 \left[\frac{\partial b_3}{\partial x_2}-\frac{\partial b_2}{\partial x_3}\right]
+(\nu_\Omega)_2 \left[\frac{\partial b_1}{\partial x_3}-\frac{\partial b_3}{\partial x_1}\right]
+(\nu_\Omega)_3\left[\frac{\partial b_2}{\partial x_1}-\frac{\partial b_1}{\partial x_2}\right]
\\ \nonumber
&&= \nu_\Omega \cdot {\mathrm{curl}}\,b
\qquad\forall b\in C^\infty(\overline{\Omega},{\mathbb{C}}^3)\,.
\end{eqnarray}
Since $\psi_x$ is of class $C^\infty$ on $\overline{\Omega}$, our assumption on 
$ {\mathrm{curl}}\,a $ implies that $\psi_x{\mathrm{curl}}\,a  $ belongs to  $C^{0,\alpha}(\overline{\Omega},{\mathbb{C}}^3)$ and accordingly
 the Divergence Theorem \ref{thm:div0a} in distributional form 
 and the known equality
 \[
 {\mathrm{div}}\,\left(\psi_x{\mathrm{curl}}\,a\right)
 =(\nabla \psi_x)\cdot {\mathrm{curl}}\,a +\psi_x{\mathrm{div}}\,{\mathrm{curl}}\,a
 =(\nabla \psi_x)\cdot {\mathrm{curl}}\,a \qquad\text{in}\ \Omega
 \]
 imply that
 \begin{eqnarray}\label{lem:nucuv5}
\lefteqn{ 
 \int_{\partial\Omega}(\nu_\Omega \cdot {\mathrm{curl}}\,a)\psi_x\,d\sigma
=\int_{\partial\Omega} \nu_\Omega \cdot \left(\psi_x{\mathrm{curl}}\,a \right) \,d\sigma
}
\\ \nonumber
&&\qquad
=\langle E^\sharp_\Omega[{\mathrm{div}}\,\left(\psi_x{\mathrm{curl}}\,a\right) ],1\rangle
=\langle E^\sharp_\Omega[(\nabla \psi_x)\cdot {\mathrm{curl}}\,a ],1\rangle\,.
\end{eqnarray}
Let $\beta\in]0,\alpha[$.
By Lemma \ref{lem:apr1a}, there exists a sequence $\{a_j\}_{j\in {\mathbb{N}}}$ in 
 $C^{\infty}(\overline{\Omega}, {\mathbb{C}}^3)$ such that
 \begin{equation}\label{lem:nucuv6a}
 \sup_{j\in {\mathbb{N}}}\|a_j\|_{
 C^{0,\alpha}(\overline{\Omega}) 
 }<+\infty\,,\qquad
 \lim_{j\to\infty}a_j=a\quad\text{in}\ C^{0,\beta}(\overline{\Omega})  \,.
 \end{equation}
 Then we also have
 \[
 \lim_{j\to\infty} {\mathrm{curl}}\,a_j= {\mathrm{curl}}\,a\quad\text{in}\ C^{-1,\beta}(\overline{\Omega})\,.
 \]
Since the multiplication by a function of class $C^{\infty}(\overline{\Omega})$ is continuous in 
 $C^{-1,\beta}(\overline{\Omega})$ (cf. \cite[Lem.~2.3]{La25}), $E^\sharp_\Omega$ is continuous from $C^{-1,\beta}(\overline{\Omega})$ to $\left(C^{1,\beta}(\overline{\Omega})\right)'$ (cf.~Proposition \ref{prop:nschext}) and the natural duality pairing of $C^{1,\beta}(\overline{\Omega})$ is continuous, 
 equalities (\ref{prop:nschext3}), (\ref{lem:nucuv4}), (\ref{lem:nucuv5}) and the Divergence  Theorem  imply that
  \begin{eqnarray}\label{lem:nucuv6}
\lefteqn{\int_{\partial\Omega}(\nu_\Omega \cdot {\mathrm{curl}}\,a)\psi_x\,d\sigma
=\langle E^\sharp_\Omega[(\nabla \psi_x)\cdot {\mathrm{curl}}\,a ],1\rangle
}
\\ \nonumber
&&\qquad 
=\lim_{j\to\infty}\langle E^\sharp_\Omega[(\nabla \psi_x)\cdot {\mathrm{curl}}\,a_j ],1\rangle
\\ \nonumber
&&\qquad 
=\lim_{j\to\infty}\int_\Omega  (\nabla \psi_x)\cdot {\mathrm{curl}}\,a_j \,dx
\\ \nonumber
&&\qquad 
=\lim_{j\to\infty}\int_\Omega  
{\mathrm{div}}\,\left(\psi_x{\mathrm{curl}}\,a_j\right)\,dx
\\ \nonumber
&&\qquad 
=\lim_{j\to\infty}
\int_{\partial\Omega}(\nu_\Omega \cdot {\mathrm{curl}}\,a_j)\psi_x\,d\sigma
\\ \nonumber
&&\qquad 
=\lim_{j\to\infty}
\int_{\partial\Omega}
M_{23}[(a_j)_1] -M_{13}[(a_j)_2] +M_{12}[(a_j)_3]\psi_x\,d\sigma
\\ \nonumber
&&\qquad 
=\lim_{j\to\infty}
\langle
M_{23}[(a_j)_1] -M_{13}[(a_j)_2] +M_{12}[(a_j)_3],\psi_x\rangle
\,.
\end{eqnarray}
 Then Proposition \ref{prop:vmlrmuco}, implies that
  \begin{eqnarray}\label{lem:nucuv7}
\lefteqn{\lim_{j\to\infty}
\langle
M_{23}[(a_j)_1] -M_{13}[(a_j)_2] +M_{12}[(a_j)_3],\psi_x\rangle
}
\\ \nonumber
&&\qquad\qquad\qquad\qquad 
=\langle
M_{23}[a_1] -M_{13}[a_2] +M_{12}[a_3],\psi_x\rangle
\end{eqnarray}
 and thus equalities (\ref{lem:nucuv6}) and (\ref{lem:nucuv7}) imply the validity of equality (\ref{lem:nucuv3}) and thus the proof is complete.\hfill  $\Box$ 

\vspace{\baselineskip}

  \section{Preliminaries to the acoustic volume potential}\label{sec:acvolpot}

Let $\alpha\in]0,1]$ and $m\in {\mathbb{N}}$.  If $\Omega$ is a  bounded open subset of ${\mathbb{R}}^{n}$, then we can consider the restriction map $r_{|\overline{\Omega}}$ from ${\mathcal{D}}({\mathbb{R}}^n)$ to $C^{m,\alpha}(\overline{\Omega})$. Then the transpose map $r_{|\overline{\Omega}}^t$ is linear and continuous from $(C^{m,\alpha}(\overline{\Omega}))'$ to ${\mathcal{D}}'({\mathbb{R}}^n)$. Moreover, if $\mu\in (C^{m,\alpha}(\overline{\Omega}))'$, then $r_{|\overline{\Omega}}^t\mu$ has compact support. Hence, it makes sense to consider the convolution of 
  $r_{|\overline{\Omega}}^t\mu$ with  the fundamental solution of either the Laplace or the Helmholtz  operator. Thus we are now ready to introduce the following known definition.
 \begin{definition}\label{defn:dvpsl}
 Let $\alpha\in]0,1]$, $m\in {\mathbb{N}}$. 
 Let   $\Omega$ be a bounded open subset of ${\mathbb{R}}^{n}$. Let $\lambda\in {\mathbb{C}}$.  Let $S_{n,\lambda} $ be a fundamental solution 
of the operator $\Delta+\lambda$. If $\mu\in (C^{m,\alpha}(\overline{\Omega}))'$, then the (distributional) volume potential relative to $S_{n,\lambda} $ and $\mu$ is the distribution
\[
{\mathcal{P}}_\Omega[S_{n,\lambda} ,\mu]=(r_{|\overline{\Omega}}^t\mu)\ast S_{n,\lambda}  \in {\mathcal{D}}'({\mathbb{R}}^n)\,.
\]
\end{definition}
Under the assumptions of Definition \ref{defn:dvpsl}, we  set
\begin{eqnarray}\label{prop:dvpsl3}
{\mathcal{P}}_\Omega^+[S_{n,\lambda} ,\mu]&\equiv&\left((r_{|\overline{\Omega}}^t\mu)\ast S_{n,\lambda}  \right)_{|\Omega}
\qquad\text{in}\ \Omega\,,
\\ \nonumber
{\mathcal{P}}_\Omega^-[S_{n,\lambda} ,\mu]  &\equiv&
\left((r_{|\overline{\Omega}}^t\mu)\ast S_{n,\lambda}  \right)_{|\Omega^-}
\qquad\text{in}\ \Omega^-\,.
\end{eqnarray}
In general, $(r_{|\overline{\Omega}}^t\mu)\ast S_{n,\lambda} $ is not a function, \textit{i.e.} $(r_{|\overline{\Omega}}^t\mu)\ast S_{n,\lambda} $ is not a distribution that is associated to a locally integrable function in ${\mathbb{R}}^n$. 
However, this is the case if for example  $\mu$ is associated to a function $f$ of $ L^\infty(\Omega)$,  and thus 
 the (distributional) volume potential relative to $S_{n,\lambda} $ and $\mu$ is associated to the function
\begin{equation}\label{prop:dvpsa1}
\int_{\Omega}S_{n,\lambda} (x-y)f(y)\,dy\qquad{\mathrm{a.a.}}\ x\in {\mathbb{R}}^n\,,
\end{equation}
that is locally integrable in ${\mathbb{R}}^n$  and that with some abuse of notation we  denote by the symbol    ${\mathcal{P}}_\Omega[S_{n,\lambda} , f]$.  Then the following classical result is known (cf.~\textit{e.g.}, \cite[ Thm.~20]{La24d}).
\begin{theorem}\label{thm:nwtdma} 
 Let $m\in {\mathbb{N}}$, $\alpha\in]0,1[$. Let $\Omega$ be a bounded open subset of  ${\mathbb{R}}^n$ of class $C^{m+1,\alpha}$. Let $\lambda\in {\mathbb{C}}$.  Let $S_{n,\lambda}  $ be a fundamental solution 
of the operator $\Delta+\lambda$. Then the following statements hold. 
 
 \item[(i)]  ${\mathcal{P}}_\Omega^+[S_{n,\lambda} ,\cdot]$ is linear and continuous from $C^{m,\alpha}(\overline{\Omega})$ to $C^{m+2,\alpha}(\overline{\Omega})$.
\item[(ii)]   ${\mathcal{P}}_\Omega^-[S_{n,\lambda} ,\cdot]$ is linear and continuous from $C^{m,\alpha}(\overline{\Omega})$ to   $C^{m+2,\alpha}(\overline{{\mathbb{B}}_n(0,r)}\setminus\Omega)$ for all $r\in]0,+\infty[$ such that $\overline{\Omega}\subseteq {\mathbb{B}}_n(0,r)$.
\end{theorem}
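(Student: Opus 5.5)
The plan is to prove Theorem \ref{thm:nwtdma} by combining the classical Schauder estimates for the Newtonian potential in ${\mathbb{R}}^n$ with a reduction of the Helmholtz case to the Laplace case.

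\emph{Step 1 (reduction to $\lambda=0$ and to the standard kernel).} Any fundamental solution $S_{n,\lambda}$ of $\Delta+\lambda$ differs from a fixed one by a real analytic solution of $(\Delta+\lambda)u=0$ in ${\mathbb{R}}^n$. The difference contributes to ${\mathcal{P}}_\Omega[S_{n,\lambda},f]$ a function that is real analytic in ${\mathbb{R}}^n$. This contribution depends linearly and continuously on $f\in L^1(\Omega)\supseteq C^{m,\alpha}(\overline{\Omega})$, with values in $C^{k}(\overline{{\mathbb{B}}_n(0,r)})$ for every $k$. So it suffices to treat the standard fundamental solution, for instance the one in \cite{DoLa17}, which has the form
\[
S_{n,\lambda}=S_n+R_{n,\lambda}.
\]
Here $R_{n,\lambda}$ has a weaker singularity: it is $O(|\xi|^{4-n})$ up to logarithmic factors, and each derivative gains one order of regularity relative to $S_n$. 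The term $\int_\Omega R_{n,\lambda}(x-y)f(y)\,dy$ is then handled by differentiating under the integral sign up to order $m+1$. At that order the kernel is $O(|\xi|^{3-n-m})$ times $f$. After first transferring the $m$ derivatives onto $f$, as described in Step 2, the kernel $D^2R_{n,\lambda}$ is weakly singular, so a standard weakly singular kernel lemma (cf.~\cite[Lem.~2.53]{DaLaMu21}) gives $C^{m+2,\alpha}$ bounds.

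\emph{Step 2 (Laplace case, induction on $m$).} For $m=0$, the continuity of ${\mathcal{P}}_\Omega^\pm[S_n,\cdot]$ from $C^{0,\alpha}(\overline{\Omega})$ to $C^{2,\alpha}$ is the classical Hölder (Korn) estimate for the Newtonian potential on a $C^{1,\alpha}$ domain. The first derivatives are obtained by differentiating under the integral. For the second derivatives one writes
\[
\partial_{j}\partial_l {\mathcal{P}}[S_n,f](x)=\int_\Omega \partial_j\partial_l S_n(x-y)\bigl(f(y)-f(x)\bigr)\,dy- f(x)\int_{\partial\Omega}\partial_jS_n(x-y)(\nu_\Omega)_l(y)\,d\sigma_y,
\]
and the boundary term is the gradient of a single layer potential with density $(\nu_\Omega)_l\in C^{0,\alpha}(\partial\Omega)$. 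That boundary term is of class $C^{1,\alpha}$ on both sides by \cite[Thm.~7.1]{DoLa17} or \cite[Thm.~4.25]{DaLaMu21}. This is exactly why $C^{1,\alpha}$ regularity of $\partial\Omega$ is needed, and it lets the estimate hold up to the boundary on each side.

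For the inductive step $m\to m+1$, with $\Omega$ of class $C^{m+2,\alpha}$, I would use the identity
\[
\partial_{x_l}{\mathcal{P}}_\Omega[S_n,f]={\mathcal{P}}_\Omega[S_n,\partial_l f]-v_\Omega[S_n,(\nu_\Omega)_l f_{|\partial\Omega}],
\]
which follows from the Divergence Theorem. The first term lies in $C^{m+2,\alpha}$ by the inductive hypothesis, since $\partial_lf\in C^{m,\alpha}$. For the second term, $(\nu_\Omega)_lf\in C^{m+1,\alpha}(\partial\Omega)$ because $\nu_\Omega\in C^{m+1,\alpha}$, and the single layer potential maps $C^{m+1,\alpha}(\partial\Omega)$ continuously into $C^{m+2,\alpha}(\overline{\Omega})$ and into $C^{m+2,\alpha}(\overline{{\mathbb{B}}_n(0,r)}\setminus\Omega)$ on domains of class $C^{m+2,\alpha}$ (\cite[Thm.~7.1]{DoLa17}). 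Hence every first derivative lies in $C^{m+2,\alpha}$, which gives ${\mathcal{P}}^\pm\in C^{m+3,\alpha}$ with continuous dependence on $f$.

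\emph{Main obstacle.} The delicate point is the base case estimate up to the boundary, which is needed for both ${\mathcal{P}}^+$ and ${\mathcal{P}}^-$. In particular, one must check that the Hölder seminorm of the singular integral $\int_\Omega D^2S_n(x-y)(f(y)-f(x))\,dy$ is controlled uniformly near $\partial\Omega$. I would take this from the cited classical literature (\cite[Thm.~20]{La24d}, or Gilbarg--Trudinger's Lemma 4.4 adapted to $C^{1,\alpha}$ domains) rather than redo it. In the exterior case the potential is harmonic in $\Omega^-$, and the $C^{2,\alpha}$ bound near $\partial\Omega$ follows from the same representation formula applied to points of $\Omega^-$.
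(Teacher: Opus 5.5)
Your proposal is correct in outline, but it takes a different route from the paper. The paper gives no argument and simply quotes the statement as classical from \cite[Thm.~20]{La24d}. For $m=0$ you end up in the same place, since you also take the Korn-type estimate up to $\partial\Omega$ from the literature. Note that citing \cite[Thm.~20]{La24d} there means citing the statement itself. An independent base case would be the Gilbarg--Trudinger argument on a $C^{1,\alpha}$ domain; its new ingredient is a bound on the truncated boundary integrals of $DS_n(x-\cdot)\,(\nu_\Omega)_l$, uniform in $x$ and in the truncation radius.

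What you add is the reduction $m\to m+1$. It uses the identity $\partial_{x_l}{\mathcal{P}}_\Omega[S_{n,\lambda},f]={\mathcal{P}}_\Omega[S_{n,\lambda},\partial_l f]-v_\Omega[S_{n,\lambda},(\nu_\Omega)_l f]$, which is Theorem~\ref{thm:fopdvpoho} for $f\in C^{1,\alpha}(\overline{\Omega})$, where $E^\sharp_\Omega[\partial_l f]=\partial_l f$. It also uses the mapping property $C^{m+1,\alpha}(\partial\Omega)\to C^{m+2,\alpha}$ of the single layer on sets of class $C^{m+2,\alpha}$. This step is sound and instructive. It shows that all higher regularity is inherited from the case $m=0$, and that the extra boundary regularity is needed only for the single layer term. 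The paper's citation, by contrast, buys brevity.

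A few details need repair.

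\begin{enumerate}
\item The identity above holds for every fundamental solution, and \cite[Thm.~7.1]{DoLa17} treats acoustic single layers. So you can run the induction directly with $S_{n,\lambda}$, and the splitting $S_n+R_{n,\lambda}$ is needed only at $m=0$. There ${\mathcal{P}}_\Omega[R_{n,\lambda},f]$ is harmless, because $D^2R_{n,\lambda}$ and $D^3R_{n,\lambda}$ are weakly singular.
\item As written, Step~1 is muddled. Differentiating $m+1$ times under the integral sign fails for $m\geq 3$, since $D^{m+1}R_{n,\lambda}=O(|\xi|^{3-n-m})$ is then not integrable. Moreover, transferring derivatives onto $f$ produces single layers with kernel $R_{n,\lambda}$ that you never estimate.
\item In the base case, the boundary term is the gradient of $v^\pm_\Omega[S_n,(\nu_\Omega)_l]$. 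It is therefore of class $C^{0,\alpha}$, not $C^{1,\alpha}$, which is all you need.
\item For ${\mathcal{P}}^-_\Omega$, your formula involves $f(x)$ at points $x\in\Omega^-$, where $f$ is not defined. Take a bounded $C^{0,\alpha}$ extension $\tilde f$ of $f$ to a ball $B$ with $\overline{{\mathbb{B}}_n(0,r)}\subseteq B$, and write ${\mathcal{P}}_\Omega[S_n,f]={\mathcal{P}}_B[S_n,\tilde f]-{\mathcal{P}}_{B\setminus\overline{\Omega}}[S_n,\tilde f]$. The first term is covered by the interior ball estimate. The second is covered by the estimate up to the boundary on the $C^{1,\alpha}$ set $B\setminus\overline{\Omega}$.
\end{enumerate}
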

 Instead, for a Schauder space of negative exponent, the following statement holds (cf.~\textit{e.g.}, 
 \cite[Prop.~23]{La24d}), that is a  generalization to volume potentials of  nonhomogeneous second order elliptic operators of a known result for the Laplace operator  (cf.~\cite[Thm.~3.6 (ii)]{La08a}, Dalla Riva, the author and Musolino~\cite[Thm.~7.19]{DaLaMu21}).
    \begin{proposition}\label{prop:dvpsnecr-1a}
  Let $\alpha\in]0,1[$.    Let $\Omega$ be a bounded open subset of ${\mathbb{R}}^{n}$ of class $C^{1,\alpha}$. Let $\lambda\in {\mathbb{C}}$.  Let $S_{n,\lambda}  $ be a fundamental solution 
of the operator $\Delta+\lambda$.   Then the following statements hold. 
 \begin{enumerate}
\item[(i)] If  $f=  f_{0}+\sum_{j=1}^{n}\frac{\partial}{\partial x_{j}}f_{j}\in C^{-1,\alpha}(\overline{\Omega}) $, then
 \begin{equation}\label{prop:dvpsnecr-1a2}
{\mathcal{P}}_\Omega^+[S_{n,\lambda} ,E^\sharp[f]]\in C^{1,\alpha}(\overline{\Omega}), \ 
{\mathcal{P}}_\Omega^-[S_{n,\lambda} ,E^\sharp[f]]\in C^{1,\alpha}_{{\mathrm{loc}} }(\overline{\Omega^-}) \,,\end{equation}
and 
\begin{equation}\label{defn:Ppm1}
{\mathcal{P}}_\Omega^+[S_{n,\lambda} ,E^\sharp[f]](x)={\mathcal{P}}_\Omega^-[S_{n,\lambda} ,E^\sharp[f]](x)\qquad\forall x\in\partial\Omega\,.
\end{equation}
Moreover,
\begin{eqnarray}\label{prop:dvpsnecr-1a2a}
&&\Delta {\mathcal{P}}_\Omega^+[S_{n,\lambda} ,E^\sharp[f]] 
+\lambda{\mathcal{P}}_\Omega^+[S_{n,\lambda} ,E^\sharp[f]]= f\qquad\textit{in}\ {\mathcal{D}}'(\Omega)\,,
\\ \nonumber
&&\Delta  {\mathcal{P}}_\Omega^-[S_{n,\lambda} ,E^\sharp[f]]
+\lambda  {\mathcal{P}}_\Omega^-[S_{n,\lambda} ,E^\sharp[f]]= 0\qquad\textit{in}\ {\mathcal{D}}'({\mathbb{R}}^n\setminus\overline{\Omega})
\,.
\end{eqnarray}
\item[(ii)] The linear operator  ${\mathcal{P}}_\Omega^+[S_{n,\lambda} ,E^\sharp[\cdot]]$ is  continuous from $C^{-1,\alpha}(\overline{\Omega}) $ to $C^{1,\alpha}(\overline{\Omega})$.
\item[(iii)] Let $r\in ]0,+\infty[$ be such that $\overline{\Omega}\subseteq {\mathbb{B}}_n(0,r)$. Then  the linear  operator ${\mathcal{P}}_\Omega^-[S_{n,\lambda} ,E^\sharp[\cdot]]_{|\overline{{\mathbb{B}}_n(0,r)}\setminus\Omega}$ is  continuous from $C^{-1,\alpha}(\overline{\Omega}) $ to  the space $C^{1,\alpha}(\overline{{\mathbb{B}}_n(0,r)}\setminus\Omega)$.
\end{enumerate}  
(See Proposition \ref{prop:nschext} for the definition of $E^\sharp$).
\end{proposition}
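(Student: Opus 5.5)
The plan is to reduce the statement to the classical mapping properties of volume and single layer potentials. The key step is an explicit representation of ${\mathcal{P}}_\Omega[S_{n,\lambda},E^\sharp[f]]$ as a sum of such potentials, obtained by computing the convolution against test functions. Fix a decomposition $f=f_0+\sum_{j=1}^n\partial_{x_j}f_j$ with $f_j\in C^{0,\alpha}(\overline{\Omega})$. By formula (\ref{prop:nschext2}), $r^t_{|\overline{\Omega}}E^\sharp[f]$ is the compactly supported distribution
\[
\varphi\mapsto \int_\Omega f_0\varphi\,dx+\sum_{j=1}^n\int_{\partial\Omega}(\nu_\Omega)_jf_j\varphi\,d\sigma-\sum_{j=1}^n\int_\Omega f_j\partial_{x_j}\varphi\,dx\,.
\]
Read in ${\mathcal{D}}'({\mathbb{R}}^n)$, this is $f_0\chi_\Omega+\sum_j\partial_{x_j}(f_j\chi_\Omega)+\sum_j r^t_{|\partial\Omega}((\nu_\Omega)_jf_j)$, where $\chi_\Omega$ is the characteristic function of $\Omega$. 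Convolving with $S_{n,\lambda}$ and using that derivatives commute with convolution, I obtain
\[
{\mathcal{P}}_\Omega[S_{n,\lambda},E^\sharp[f]]={\mathcal{P}}_\Omega[S_{n,\lambda},f_0]+\sum_{j=1}^n\partial_{x_j}{\mathcal{P}}_\Omega[S_{n,\lambda},f_j]+\sum_{j=1}^n v_\Omega[S_{n,\lambda},(\nu_\Omega)_jf_j]\,.
\]

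Next I would read off regularity term by term. The first term, with $f_0\in C^{0,\alpha}$, lies in $C^{1,\alpha}$ on both sides: Theorem \ref{thm:nwtdma} gives this only for $\Omega$ of class $C^{1,\alpha}$, but $C^{1,\alpha}$ regularity of the volume potential of a bounded function follows anyway from the classical estimate on $\nabla S_{n,\lambda}\ast(f_0\chi_\Omega)$. The third term is a single layer with $C^{0,\alpha}$ density, hence $C^{1,\alpha}$ on $\overline\Omega$ and locally on $\overline{\Omega^-}$, and continuous across $\partial\Omega$ (cf.~\cite[Thm.~7.1]{DoLa17}).

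The middle term is the delicate one. Here ${\mathcal{P}}_\Omega[S_{n,\lambda},f_j]$ is globally $C^{1}$ on ${\mathbb{R}}^n$, and its derivative $\partial_{x_j}{\mathcal{P}}=\int_\Omega\partial_{x_j}S_{n,\lambda}(x-y)f_j(y)\,dy$ is continuous across $\partial\Omega$. I would write $f_j(y)=(f_j(y)-f_j(x))+f_j(x)$ and convert $\int_\Omega\partial_{x_j}S_{n,\lambda}(x-y)\,dy$ into a boundary integral by the divergence theorem; this is the usual route to $C^{1,\alpha}$ of the second derivatives of Newtonian potentials on $C^{1,\alpha}$ domains. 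This yields $C^{1,\alpha}$ on each side. The jump in the second derivatives is then compensated by the single layer term, but for the statement itself only one-sided $C^{1,\alpha}$ regularity plus matching boundary values is needed.

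Equality (\ref{defn:Ppm1}) then holds because every summand is continuous on ${\mathbb{R}}^n$. The equations (\ref{prop:dvpsnecr-1a2a}) follow from $(\Delta+\lambda)(\mu\ast S_{n,\lambda})=\mu$, since $r^t E^\sharp[f]$ restricted to $\Omega$ is $f$ by (\ref{prop:nschext1}) and vanishes on $\Omega^-$. For continuity in (ii) and (iii), each estimate is bounded by $c\sum_j\|f_j\|_{C^{0,\alpha}}$; taking the infimum over decompositions gives the bound by $\|f\|_{C^{-1,\alpha}}$. The computation also shows that the result is independent of the decomposition, because the left-hand side depends on $f$ only through $E^\sharp[f]$. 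I expect the Hölder estimate for the second derivatives of the volume potential on a $C^{1,\alpha}$ domain to be the main obstacle. If needed, I would reduce the case of general $\lambda$ to $\lambda=0$ by writing $S_{n,\lambda}-S_n$ as a kernel that is more regular by two orders.
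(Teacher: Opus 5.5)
The paper does not prove this proposition; it quotes it from \cite[Prop.~23]{La24d}. Your argument is correct. Its key identity
\[
{\mathcal{P}}_\Omega[S_{n,\lambda},E^\sharp[f]]={\mathcal{P}}_\Omega[S_{n,\lambda},f_0]+\sum_{j=1}^n\Bigl(\frac{\partial}{\partial x_j}{\mathcal{P}}_\Omega[S_{n,\lambda},f_j]+v_\Omega[S_{n,\lambda},(\nu_\Omega)_jf_j]\Bigr)
\]
is formula (\ref{thm:fopdvpoho1}) of Theorem \ref{thm:fopdvpoho} rearranged, and that theorem comes from the same source. So this is presumably the route behind the citation.

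The one correction is to your assessment of the \emph{main obstacle}. Theorem \ref{thm:nwtdma} with $m=0$ requires exactly the hypothesis you have, namely $\Omega$ of class $C^{1,\alpha}$, and it holds for an arbitrary fundamental solution $S_{n,\lambda}$. It gives continuity of ${\mathcal{P}}_\Omega^+[S_{n,\lambda},\cdot]$ from $C^{0,\alpha}(\overline{\Omega})$ to $C^{2,\alpha}(\overline{\Omega})$. It also gives continuity of ${\mathcal{P}}_\Omega^-[S_{n,\lambda},\cdot]$ into $C^{2,\alpha}(\overline{{\mathbb{B}}_n(0,r)}\setminus\Omega)$.

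Applied to $f_0$ and to each $f_j$, this settles the first and middle terms, together with the continuity estimates needed in (ii) and (iii). You therefore do not need the divergence-theorem estimate for second derivatives or the reduction from $\lambda$ to $0$. The remaining steps are fine as written: continuity of each summand across $\partial\Omega$, the equations via $(\Delta+\lambda)S_{n,\lambda}=\delta$ and (\ref{prop:nschext1}), and the infimum over decompositions.
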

Then we also note that the following holds. For a proof, we refer to
\cite[Thm.~18, Prop.~22]{La24d}.
\begin{theorem}\label{thm:fopdvpoho}
 Let $n\in {\mathbb{N}}\setminus\{0,1\}$, $\alpha\in]0,1[$. Let $\Omega $ be a bounded open  subset of ${\mathbb{R}}^n$ of class $C^{1,\alpha}$. Let $\lambda\in {\mathbb{C}}$. Let $S_{n,\lambda}$ be a fundamental solution of the operator $\Delta+\lambda$. If $f\in C^{0,\alpha}(\overline{\Omega})$, then ${\mathcal{P}}_\Omega[S_{n,\lambda},f]$ is continuously differentiable in ${\mathbb{R}}^n$. Moreover, if 
    $l\in\{1,\dots,n\}$, then 
\begin{eqnarray} \label{thm:fopdvpoho1}
\lefteqn{
\frac{\partial}{\partial x_l}{\mathcal{P}}_\Omega[S_{n,\lambda},f](x)
=
{\mathcal{P}}_\Omega[\frac{\partial}{\partial x_l}S_{n,\lambda},f](x)
}
\\ \nonumber
&&\qquad 
={\mathcal{P}}_\Omega\left[S_{n,\lambda},E^\sharp_\Omega\left[ \frac{\partial f}{\partial x_l}\right]\right](x)
-\int_{\partial\Omega}S_{n,\lambda}(x-y)f(y)(\nu_\Omega(y))_l\,d\sigma_y
\end{eqnarray}
for all $x\in {\mathbb{R}}^n$.
\end{theorem}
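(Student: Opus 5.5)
Since $f\in C^{0,\alpha}(\overline{\Omega})$ is bounded, ${\mathcal{P}}_\Omega[S_{n,\lambda},f]$ is the function in (\ref{prop:dvpsa1}). The first equality of (\ref{thm:fopdvpoho1}) and the $C^1$ regularity in ${\mathbb{R}}^n$ are classical. The singularity $|x-y|^{1-n}$ of $\frac{\partial}{\partial x_l}S_{n,\lambda}(x-y)$ is integrable in $y$, uniformly in $x$. So I would differentiate under the integral sign: cut off the singularity with $\eta_\epsilon(|x-y|)$, differentiate the regularized integrals, and pass to the limit uniformly in $x$ on compact sets via the weak-singularity bound (cf.~\cite[Lem.~2.53]{DaLaMu21}). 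Uniform convergence of the derivatives then gives continuous differentiability in ${\mathbb{R}}^n$ and $\frac{\partial}{\partial x_l}{\mathcal{P}}_\Omega[S_{n,\lambda},f]={\mathcal{P}}_\Omega[\frac{\partial}{\partial x_l}S_{n,\lambda},f]$. Only boundedness of $f$ is used here.

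For the second equality I would first treat $f$ smooth and then approximate.

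\textbf{Smooth case.} Let $f\in C^\infty(\overline{\Omega})$ and fix $x\in{\mathbb{R}}^n$. Write $\frac{\partial}{\partial x_l}S_{n,\lambda}(x-y)=-\frac{\partial}{\partial y_l}(S_{n,\lambda}(x-y))$. Apply the Divergence Theorem on $\Omega\setminus\overline{{\mathbb{B}}_n(x,\epsilon)}$ (or on $\Omega$ itself when $x\notin\overline{\Omega}$) and let $\epsilon\to0$. The small-sphere contribution is $O(\epsilon^{n-1}\cdot\epsilon^{2-n}|\log\epsilon|)\to0$. For $x\in\partial\Omega$ the same computation goes through, since the boundary integral has a weakly singular kernel. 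The result is
\[
{\mathcal{P}}_\Omega[\tfrac{\partial}{\partial x_l}S_{n,\lambda},f](x)=\int_\Omega S_{n,\lambda}(x-y)\tfrac{\partial f}{\partial y_l}(y)\,dy-\int_{\partial\Omega}S_{n,\lambda}(x-y)f(y)(\nu_\Omega)_l(y)\,d\sigma_y .
\]
By (\ref{prop:nschext3}) the first term equals ${\mathcal{P}}_\Omega[S_{n,\lambda},E^\sharp_\Omega[\partial_l f]](x)$, so the second equality of (\ref{thm:fopdvpoho1}) holds for smooth $f$.

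\textbf{General case.} For $f\in C^{0,\alpha}(\overline{\Omega})$, fix $\beta\in]0,\alpha[$ and take $f_j\in C^\infty(\overline{\Omega})$ as in Lemma \ref{lem:apr1a}, so $f_j\to f$ in $C^{0,\beta}(\overline{\Omega})$. Then $\partial_l f_j\to\partial_l f$ in $C^{-1,\beta}(\overline{\Omega})$. Lemma \ref{lem:apr1a} requires $\Omega$ of class $C^{1,\alpha}$ with $h=0$, which holds. I would pass to the limit in each term, pointwise in $x$:
\begin{itemize}
\item The left side: $|{\mathcal{P}}_\Omega[\partial_l S_{n,\lambda},f_j-f](x)|\le C\sup|f_j-f|\to 0$.
\item The boundary integral converges, since $f_j\to f$ uniformly on $\partial\Omega$ and $S_{n,\lambda}(x-\cdot)$ is integrable on $\partial\Omega$.
\item The term ${\mathcal{P}}_\Omega[S_{n,\lambda},E^\sharp[\partial_l f_j]]$: apply Proposition \ref{prop:dvpsnecr-1a} (ii), (iii) with $\beta$ in place of $\alpha$. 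The operator ${\mathcal{P}}^\pm_\Omega[S_{n,\lambda},E^\sharp[\cdot]]$ is continuous from $C^{-1,\beta}(\overline{\Omega})$ to $C^{1,\beta}$ on $\overline{\Omega}$ and on $\overline{{\mathbb{B}}_n(0,r)}\setminus\Omega$. Together with (\ref{defn:Ppm1}) this gives convergence at every $x\in{\mathbb{R}}^n$, including $x\in\partial\Omega$.
\end{itemize}

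\textbf{Main obstacle.} The delicate point is the meaning of ${\mathcal{P}}_\Omega[S_{n,\lambda},E^\sharp[\cdot]](x)$ at individual points, especially on $\partial\Omega$. There it must be read as the common value of the continuous extensions ${\mathcal{P}}^\pm$ from Proposition \ref{prop:dvpsnecr-1a}. The pointwise convergence above rests on exactly this identification, together with continuity of both sides of (\ref{thm:fopdvpoho1}) in $x$.
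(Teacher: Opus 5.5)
Your proof is correct, taking Proposition~\ref{prop:dvpsnecr-1a} as given, since the paper states it before this theorem. However, it is a detour compared with the direct argument; the paper itself gives no proof here and only cites \cite{La24d}.

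The second equality is an identity in ${\mathcal{D}}'({\mathbb{R}}^n)$ and needs no approximation. By (\ref{corol:gengr1}), for all $v\in{\mathcal{D}}({\mathbb{R}}^n)$ we have $\langle r^t_{|\overline{\Omega}}E^\sharp_\Omega[\partial_l f],v\rangle=\int_{\partial\Omega}(\nu_\Omega)_l f v\,d\sigma-\int_\Omega f\,\partial_l v\,dx$. That is, $r^t_{|\overline{\Omega}}E^\sharp_\Omega[\partial_l f]=r^t_{|\partial\Omega}[(\nu_\Omega)_l f]+\partial_l\bigl(r^t_{|\overline{\Omega}}f\bigr)$. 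Convolving with $S_{n,\lambda}$ (the distributions being convolved have compact support) gives
\[
{\mathcal{P}}_\Omega[S_{n,\lambda},E^\sharp_\Omega[\partial_l f]]=v_\Omega[S_{n,\lambda},(\nu_\Omega)_l f]+\partial_l{\mathcal{P}}_\Omega[S_{n,\lambda},f]\qquad\text{in}\ {\mathcal{D}}'({\mathbb{R}}^n)\,.
\]
The right-hand side is a continuous function on ${\mathbb{R}}^n$. This follows from your first step and from the continuity of the single layer potential with continuous density. Hence the left-hand side is a continuous function too, and the formula holds at every $x$, including $x\in\partial\Omega$.

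This dissolves your ``main obstacle'': the boundary values come out of the identity itself rather than from matching the traces of ${\mathcal{P}}^\pm_\Omega$. It also avoids Proposition~\ref{prop:dvpsnecr-1a} altogether, which matters. The $C^{-1,\alpha}\to C^{1,\alpha}$ mapping property is the deeper fact, and it is naturally proved from this very identity: apply it to each $g_j$ in $g=g_0+\sum_j\partial_j g_j$ and use Theorem~\ref{thm:nwtdma} with $m=0$. So in a self-contained development, your route risks circularity. What your approach buys is only that the integration by parts is done classically for smooth $f$.

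If you keep your approach, make two small points explicit.
\begin{enumerate}
\item The $\beta$- and $\alpha$-versions of $E^\sharp_\Omega$ agree on $C^{-1,\alpha}(\overline{\Omega})$ by (\ref{prop:nschext2}). Hence they define the same distribution $r^t_{|\overline{\Omega}}E^\sharp_\Omega[\cdot]$ and the same potential.
\item In the smooth case at $x\in\partial\Omega$, avoid excising a ball centred on the boundary. It is cleaner to use that both sides are continuous on ${\mathbb{R}}^n$ and that $\partial\Omega$ has empty interior.
\end{enumerate}
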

Then we can immediately deduce the validity for the following statement on   the divergence and curl  of a H\"{o}lder continuous vector volume potential. For the classical version of the formula for the curl, we refer to Kirsch and Hettlich \cite[Lem.~3.26]{KiHe15}.
\begin{corollary}\label{corol:fodivcurlho}
 Let $n\in {\mathbb{N}}\setminus\{0,1\}$. Let $\Omega $ be a bounded open  subset of ${\mathbb{R}}^n$ of class $C^{1,\alpha}$. Let $r\in]0,+\infty[$ be such that
 $\overline{\Omega}\subseteq {\mathbb{B}}_n(0,r)$. Let $\lambda\in {\mathbb{C}}$. Let $S_{n,\lambda}$ be a fundamental solution of the operator $\Delta+\lambda$. If $f\equiv(f_j)_{j=1,\dots,n}\in C^{0,\alpha}(\overline{\Omega}, {\mathbb{C}}^n)$, then $ {\mathcal{P}}_\Omega[S_{n,\lambda},f]\equiv \left( {\mathcal{P}}_\Omega[S_{n,\lambda},f_j]\right)_{j=1,\dots,n}$
 belongs to $C^1({\mathbb{R}}^n, {\mathbb{C}}^n)$, $ {\mathcal{P}}_\Omega^+[S_{n,\lambda},f]$ belongs to $C^{2,\alpha}(\overline{\Omega}, {\mathbb{C}}^n)$ and 
 $ {\mathcal{P}}_\Omega^-[S_{n,\lambda},f]_{\overline{{\mathbb{B}}_n(0,r)}\setminus\Omega}$ belongs to $C^{2,\alpha}(\overline{{\mathbb{B}}_n(0,r)}\setminus\Omega, {\mathbb{C}}^n)$. Moreover,
 \begin{eqnarray}\label{corol:fodivcurlho1}
 {\mathrm{div}}\, \left({\mathcal{P}}_\Omega^\pm[S_{n,\lambda},f]\right)
 ={\mathcal{P}}_\Omega^\pm\left[S_{n,\lambda},E^\sharp_\Omega [{\mathrm{div}}\,f]\right]
 -
 v_\Omega^\pm[S_{n,\lambda}, \nu_\Omega\cdot f_{|\partial\Omega}]\,,
 \end{eqnarray}
 in $\overline{\Omega^\pm}$, where $\Omega^+\equiv\Omega$, $\Omega^-\equiv {\mathbb{R}}^n\setminus\overline{\Omega}$.
 If $n=3$, then we   have
 \begin{equation}\label{corol:fodivcurlho2}
  {\mathrm{curl}}\, \left({\mathcal{P}}_\Omega^\pm[S_{3,\lambda},f]\right)
 ={\mathcal{P}}_\Omega^\pm\left[S_{3,\lambda},E^\sharp_\Omega [{\mathrm{curl}}\,f]\right]
-
 v_\Omega^\pm [S_{3,\lambda}, \nu_\Omega\bar{\wedge} f_{|\partial\Omega}]\,,
\end{equation}
in $\overline{\Omega^\pm}$.
\end{corollary}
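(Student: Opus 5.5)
The corollary follows almost entirely from Theorem \ref{thm:fopdvpoho}, applied componentwise, together with the regularity results Theorem \ref{thm:nwtdma} and Proposition \ref{prop:dvpsnecr-1a}. I would proceed in three steps: regularity, the divergence formula, and the curl formula.

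For the regularity claims, I would apply Theorem \ref{thm:fopdvpoho} to each component $f_j\in C^{0,\alpha}(\overline{\Omega})$. This gives ${\mathcal{P}}_\Omega[S_{n,\lambda},f]\in C^1({\mathbb{R}}^n,{\mathbb{C}}^n)$. The memberships ${\mathcal{P}}_\Omega^+[S_{n,\lambda},f]\in C^{2,\alpha}(\overline{\Omega},{\mathbb{C}}^n)$ and ${\mathcal{P}}_\Omega^-[S_{n,\lambda},f]_{|\overline{{\mathbb{B}}_n(0,r)}\setminus\Omega}\in C^{2,\alpha}$ would then follow from Theorem \ref{thm:nwtdma} with $m=0$, which requires $\Omega$ of class $C^{1,\alpha}$; this is exactly the hypothesis. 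As a consequence, the first-order derivatives of ${\mathcal{P}}_\Omega^\pm$ extend continuously to $\overline{\Omega^\pm}$. For this reason it suffices to prove the two identities pointwise in $\Omega^\pm$ and then pass to the closures by continuity.

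For the divergence formula, I would fix $x\in{\mathbb{R}}^n\setminus\partial\Omega$ and sum formula (\ref{thm:fopdvpoho1}) with $f$ replaced by $f_l$ and differentiation in $x_l$, over $l=1,\dots,n$. The linearity of $E^\sharp_\Omega$ and of $\mu\mapsto{\mathcal{P}}_\Omega[S_{n,\lambda},\mu]$ gives
\[
{\mathrm{div}}\,{\mathcal{P}}_\Omega[S_{n,\lambda},f](x)={\mathcal{P}}_\Omega\Bigl[S_{n,\lambda},E^\sharp_\Omega\Bigl[\sum_{l=1}^n\frac{\partial f_l}{\partial x_l}\Bigr]\Bigr](x)-\int_{\partial\Omega}S_{n,\lambda}(x-y)\,\nu_\Omega(y)\cdot f(y)\,d\sigma_y .
\]
The last integral is $v_\Omega[S_{n,\lambda},\nu_\Omega\cdot f_{|\partial\Omega}](x)$. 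Extension up to the boundary then works term by term. The left-hand side extends by the $C^{2,\alpha}$ regularity above. The first term on the right extends by Proposition \ref{prop:dvpsnecr-1a}(i), which applies because ${\mathrm{div}}\,f\in C^{-1,\alpha}(\overline{\Omega})$. The single layer term extends because its density $\nu_\Omega\cdot f_{|\partial\Omega}$ is continuous, so $v_\Omega$ is continuous on ${\mathbb{R}}^n$.

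For the curl formula, with $n=3$ and $f\in C^{0,\alpha}(\overline{\Omega},{\mathbb{C}}^3)$, I would treat each component $k=1,2,3$ in the same way. As an example, the first component is
\[
\partial_{x_2}{\mathcal{P}}[f_3]-\partial_{x_3}{\mathcal{P}}[f_2]={\mathcal{P}}\Bigl[E^\sharp\Bigl[\frac{\partial f_3}{\partial x_2}-\frac{\partial f_2}{\partial x_3}\Bigr]\Bigr]-v_\Omega[(\nu_\Omega)_2f_3-(\nu_\Omega)_3f_2].
\]
By (\ref{eq:scvepr}), the density $(\nu_\Omega)_2f_3-(\nu_\Omega)_3f_2$ is the first component of $\nu_\Omega\bar{\wedge}f$, and the other two components work out the same way. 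The only real care needed in the whole proof is this sign and index bookkeeping in the cross product; the extension to $\overline{\Omega^\pm}$ is then handled exactly as in the divergence case.
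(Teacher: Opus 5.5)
Your proposal is correct and follows essentially the same route as the paper. You apply Theorem \ref{thm:fopdvpoho} componentwise, assemble the divergence and the curl with the same index bookkeeping for $\nu_\Omega\bar{\wedge} f$, and extend the identities from $\Omega^\pm$ to $\overline{\Omega^\pm}$ using Proposition \ref{prop:dvpsnecr-1a} and the continuity of the single layer potential with continuous density. Your explicit appeal to Theorem \ref{thm:nwtdma} with $m=0$ for the $C^{2,\alpha}$ memberships just spells out a step the paper leaves implicit.
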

{\bf Proof.} The statement follows by Theorem \ref{thm:fopdvpoho}. We just observe the validity of the following equality
\begin{eqnarray*} 
\lefteqn{
 {\mathrm{curl}}\, \left({\mathcal{P}}_\Omega[S_{3,\lambda},f]\right)
}
\\ \nonumber
&&\qquad 
=e_1\left(
\frac{\partial}{\partial x_2}{\mathcal{P}}_\Omega[S_{3,\lambda},f_3]
-
\frac{\partial}{\partial x_3}{\mathcal{P}}_\Omega[S_{3,\lambda},f_2]
\right)
\\ \nonumber
&&\qquad\quad
-e_2\left(
\frac{\partial}{\partial x_1}{\mathcal{P}}_\Omega[S_{3,\lambda},f_3]
-
\frac{\partial}{\partial x_3}{\mathcal{P}}_\Omega[S_{3,\lambda},f_1]
\right)
\\ \nonumber
&&\qquad\quad
+e_3\left(
\frac{\partial}{\partial x_1}{\mathcal{P}}_\Omega[S_{3,\lambda},f_2]
-
\frac{\partial}{\partial x_2}{\mathcal{P}}_\Omega[S_{3,\lambda},f_1]
\right)
\\ \nonumber
&&\qquad 
=e_1\left(
{\mathcal{P}}_\Omega[S_{3,\lambda},E^\sharp_\Omega[
\frac{\partial f_3}{\partial x_2}-\frac{\partial f_2}{\partial x_3}
]]
-v_\Omega [S_{3,\lambda},(\nu_\Omega)_2f_3-(\nu_\Omega)_3f_2]\right)
\\ \nonumber
&&\qquad\quad 
-e_2\left(
{\mathcal{P}}_\Omega[S_{3,\lambda},E^\sharp_\Omega[
\frac{\partial f_3}{\partial x_1}-\frac{\partial f_1}{\partial x_3}
]]
-v_\Omega [S_{3,\lambda},(\nu_\Omega)_1f_3-(\nu_\Omega)_3f_1]\right)
\\ \nonumber
&&\qquad\quad 
+e_3\left(
{\mathcal{P}}_\Omega[S_{3,\lambda},E^\sharp_\Omega[
\frac{\partial f_2}{\partial x_1}-\frac{\partial f_1}{\partial x_2}
]]
-v_\Omega [S_{3,\lambda},(\nu_\Omega)_1f_2-(\nu_\Omega)_2f_1]\right)
\\ \nonumber
&&\qquad
={\mathcal{P}}_\Omega\left[S_{3,\lambda},E^\sharp_\Omega [{\mathrm{curl}}\,f]\right]
-
 v_\Omega[S_{3,\lambda}, \nu_\Omega\bar{\wedge} f_{|\partial\Omega}]\quad\text{in}\	{\mathbb{R}}^3\setminus\partial\Omega\,.
\end{eqnarray*}
and note that by Proposition \ref{prop:dvpsnecr-1a} and by a known continuity result of the single layer potential in ${\mathbb{R}}^3$ (cf.~\textit{e.g.}, \cite[Lem.~4.2 (i), Lem.~6.2]{DoLa17}), 
both the restriction of the right hand side to $\Omega$ and to $\Omega^+$ admit a continuous extension to $\overline{\Omega}$ and to $\overline{\Omega^-}$, respectively. \hfill  $\Box$ 

\vspace{\baselineskip}

   \section{The fundamental theorem of vector calculus   for H\"{o}lder continuous  functions}\label{sec:futvean}
   
We now prove a form of   the fundamental theorem of vector calculus  that 
  generalizes the corresponding classical identity 
(see Martensen \cite[p.~97]{Ma68}, von Wahl \cite[(0.2), p.~125]{vo92}, 
Kirsch and Hettlich \cite[Thm.~3.25]{KiHe15}). We first mention the contribution of von Wahl \cite[p.~125 and following lines]{vo92} in case $\lambda=0$ and the first order partial derivatives belong belong to a Lebesgue space.
As opposed to the classical counterpart, we formulate no assumption whatsoever on the first order partial derivatives of the vector  field by resorting to the operator $E^\sharp_\Omega$ and to the corresponding results for the volume potential of Proposition \ref{prop:dvpsnecr-1a}
\begin{theorem}\label{thm:cudirepl}
Let $\alpha\in]0,1[$. Let $\Omega$ be a bounded open subset of ${\mathbb{R}}^3$ of class $C^{1,\alpha}$. Let $\lambda\in {\mathbb{C}}$.  Let $S_{3,\lambda}$ be a fundamental solution 
of the operator $\Delta+\lambda$. 
Let $E\in C^{0,\alpha}(\overline{\Omega},{\mathbb{C}}^3)$. Then we have
\begin{eqnarray}\nonumber
\lefteqn{
E(x)=-{\mathrm{curl}}_x{\mathcal{P}}_\Omega^+[S_{3,\lambda},E^\sharp_\Omega[{\mathrm{curl}} E]](x)
+D_x{\mathcal{P}}_\Omega^+[S_{3,\lambda},E^\sharp_\Omega[{\mathrm{div}} E]](x)
}
\\ \label{thm:cudirepl1a}
&&\qquad\qquad\qquad\qquad\quad
+\lambda{\mathcal{P}}_\Omega^+[S_{3,\lambda}, E](x)
\\ \nonumber
&&\qquad\qquad\qquad\qquad\quad
+{\mathrm{curl}}_x\int_{\partial\Omega}S_{3,\lambda}(x-y)\nu_\Omega(y)\bar{\wedge}E(y)\,d\sigma_y
\\ \nonumber
&&\qquad\qquad\qquad\qquad\quad
-D_x\int_{\partial\Omega}S_{3,\lambda}(x-y)\nu_\Omega(y)\cdot E(y)\,d\sigma_y
\quad\forall x\in\Omega\,,
\\ \nonumber
\lefteqn{
0=-{\mathrm{curl}}_x{\mathcal{P}}_\Omega^-[S_{3,\lambda},E^\sharp_\Omega[{\mathrm{curl}} E]](x)
+D_x{\mathcal{P}}_\Omega^-[S_{3,\lambda},E^\sharp_\Omega[{\mathrm{div}} E]](x)
}
\\ \label{thm:cudirepl1b}
&&\qquad\qquad\qquad\qquad\quad
+\lambda{\mathcal{P}}_\Omega^-[S_{3,\lambda}, E](x)
\\ \nonumber
&&\qquad\qquad\qquad\qquad\quad
+{\mathrm{curl}}_x\int_{\partial\Omega}S_{3,\lambda}(x-y)\nu_\Omega(y)\bar{\wedge}E(y)\,d\sigma_y
\\ \nonumber
&&\qquad\qquad\qquad\qquad\quad
-D_x\int_{\partial\Omega}S_{3,\lambda}(x-y)\nu_\Omega(y)\cdot E(y)\,d\sigma_y
\quad\forall x\in\Omega^-\,.
\end{eqnarray}
\end{theorem}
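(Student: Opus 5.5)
The plan is to derive the representation from the identity $-\Delta E = {\mathrm{curl}}\,{\mathrm{curl}}\,E - \nabla\,{\mathrm{div}}\,E$. We apply it to the volume potential ${\mathcal{P}}_\Omega[S_{3,\lambda},E]$ and then use Corollary \ref{corol:fodivcurlho} to move ${\mathrm{curl}}$ and ${\mathrm{div}}$ inside the potential. Set $u\equiv {\mathcal{P}}_\Omega[S_{3,\lambda},E]$, componentwise. By Corollary \ref{corol:fodivcurlho}, $u\in C^1({\mathbb{R}}^3,{\mathbb{C}}^3)$ and $u^+\in C^{2,\alpha}(\overline{\Omega},{\mathbb{C}}^3)$. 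Since $S_{3,\lambda}$ is a fundamental solution, $\Delta u+\lambda u=E$ in $\Omega$ and $\Delta u+\lambda u=0$ in $\Omega^-$ (classical, by Theorem \ref{thm:nwtdma}, or by Proposition \ref{prop:dvpsnecr-1a} with $f=E$ and (\ref{prop:nschext3})).

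Since $u^\pm$ are of class $C^2$ in $\Omega^\pm$, the vector identity $\Delta u={\mathrm{grad}}\,{\mathrm{div}}\,u-{\mathrm{curl}}\,{\mathrm{curl}}\,u$ holds pointwise there. This gives
\[
\chi_{\Omega^\pm}E=-{\mathrm{curl}}\,{\mathrm{curl}}\,u^\pm+D\,{\mathrm{div}}\,u^\pm+\lambda u^\pm\quad\text{in}\ \Omega^\pm\,,
\]
where the left-hand side is $E$ in $\Omega$ and $0$ in $\Omega^-$. Next we insert formulas (\ref{corol:fodivcurlho1}) and (\ref{corol:fodivcurlho2}):
\[
{\mathrm{curl}}\,u^\pm={\mathcal{P}}^\pm_\Omega[S_{3,\lambda},E^\sharp_\Omega[{\mathrm{curl}}\,E]]-v^\pm_\Omega[S_{3,\lambda},\nu_\Omega\bar{\wedge}E]\,,
\]
\[
{\mathrm{div}}\,u^\pm={\mathcal{P}}^\pm_\Omega[S_{3,\lambda},E^\sharp_\Omega[{\mathrm{div}}\,E]]-v^\pm_\Omega[S_{3,\lambda},\nu_\Omega\cdot E]\,.
\]
Taking one more ${\mathrm{curl}}$ and one more gradient in the open sets $\Omega^\pm$ yields exactly the five terms of (\ref{thm:cudirepl1a}) and (\ref{thm:cudirepl1b}). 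The single layer potentials with continuous densities are given pointwise off $\partial\Omega$ by the boundary integrals that appear in the statement.

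The main obstacle is justifying that these differentiations are legitimate. The potentials ${\mathcal{P}}^\pm_\Omega[S_{3,\lambda},E^\sharp_\Omega[\cdot]]$ are only known to be $C^{1,\alpha}$ by Proposition \ref{prop:dvpsnecr-1a}. However, ${\mathrm{curl}}\,u^\pm$ and ${\mathrm{div}}\,u^\pm$ are $C^1$ in the open sets $\Omega^\pm$, because $u^\pm\in C^{2,\alpha}$. The single layer terms are real analytic (indeed $C^\infty$) off $\partial\Omega$. Hence the differences, namely the ${\mathcal{P}}^\pm$ terms, are also $C^1$ in $\Omega^\pm$. Consequently their classical first derivatives coincide with the distributional ones, and the pointwise identity holds for every $x\in\Omega^\pm$.

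If one prefers not to rely on the $C^{2,\alpha}$ regularity of Corollary \ref{corol:fodivcurlho}, there is an alternative: work distributionally in $\Omega^\pm$, apply the identity to $u$ as a distribution, and note that all terms are continuous functions, so the equality holds pointwise. In either case, the only real input is Corollary \ref{corol:fodivcurlho} together with the fact that $(\Delta+\lambda)u=E\chi_\Omega$ in ${\mathbb{R}}^3\setminus\partial\Omega$.
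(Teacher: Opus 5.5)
Your proposal is correct and follows essentially the paper's route. It uses $(\Delta+\lambda){\mathcal{P}}_\Omega[S_{3,\lambda},E]=E$ in $\Omega$ and $=0$ in $\Omega^-$, the identity ${\mathrm{curl}}\,{\mathrm{curl}}=-\Delta+D\,{\mathrm{div}}$, and Corollary \ref{corol:fodivcurlho} to move ${\mathrm{curl}}$ and ${\mathrm{div}}$ inside the potential. The only difference is that you argue pointwise via the $C^{2,\alpha}$ regularity of ${\mathcal{P}}^\pm_\Omega[S_{3,\lambda},E]$, whereas the paper works in the sense of distributions and concludes from the $C^{1,\alpha}$ regularity of each term (Proposition \ref{prop:dvpsnecr-1a} and the single layer potentials); that $C^{1,\alpha}$ regularity already gives classical first derivatives, so the ``obstacle'' you discuss is not a real difficulty.
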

{\bf Proof.}  Our starting point are the well-known identities 
\begin{eqnarray*}
E &=&\Delta_x{\mathcal{P}}_\Omega^+[S_{3,\lambda}, E]  
+\lambda{\mathcal{P}}_\Omega^+[S_{3,\lambda}, E] \quad\text{in}\ \Omega\,,
\\ \nonumber
0&=&\Delta_x{\mathcal{P}}_\Omega^-[S_{3,\lambda}, E]  
+\lambda{\mathcal{P}}_\Omega^-[S_{3,\lambda}, E] \quad \text{in}\ \Omega^-\,,
\end{eqnarray*}
that hold in the sense of distributions  for the volume potential (cf. (\ref{prop:nschext3}) and   Proposition \ref{prop:dvpsnecr-1a}). Then the well-known identity
\[
{\mathrm{curl}}{\mathrm{curl}} F=-\Delta F+D{\mathrm{div}} F
\qquad\text{in}\ {\mathbb{R}}^3\setminus\partial\Omega
\]
that holds in the sense of distributions for all $F\in C^1({\mathbb{R}}^3\setminus\partial\Omega)$
and in particular for the volume potential in ${\mathbb{R}}^3\setminus\partial\Omega$
(cf. Proposition \ref{prop:dvpsnecr-1a}))
implies that
\begin{eqnarray}\label{thm:cudirepl2}
\lefteqn{E =-{\mathrm{curl}}\,{\mathrm{curl}}{\mathcal{P}}_\Omega^+[S_{3,\lambda}, E]
}
\\ \nonumber
&&\qquad\qquad
+D{\mathrm{div}}{\mathcal{P}}_\Omega^+[S_{3,\lambda}, E] 
+\lambda{\mathcal{P}}_\Omega^+[S_{3,\lambda}, E] \qquad\text{in}\ \Omega\,,
\\ \nonumber
\lefteqn{0=-{\mathrm{curl}}\,{\mathrm{curl}}{\mathcal{P}}_\Omega^-[S_{3,\lambda}, E]
}
\\ \nonumber
&&\qquad\qquad
+D{\mathrm{div}}{\mathcal{P}}_\Omega^-[S_{3,\lambda}, E] 
+\lambda{\mathcal{P}}_\Omega^-[S_{3,\lambda}, E] \qquad\text{in}\ \Omega^-
\,.
\end{eqnarray}
By Corollary \ref{corol:fodivcurlho}, we have
\begin{eqnarray}\label{thm:cudirepl3}
{\mathrm{curl}}{\mathcal{P}}_\Omega^\pm[S_{3,\lambda}, E]&=&{\mathcal{P}}_\Omega^\pm[S_{3,\lambda}, E^\sharp_\Omega[{\mathrm{curl}} E]]
-v_\Omega^\pm[S_{3,\lambda},\nu_\Omega\bar{\wedge} E]\,,
\\ \nonumber
{\mathrm{div}}{\mathcal{P}}_\Omega^\pm[S_{3,\lambda}, E]&=&{\mathcal{P}}_\Omega^\pm[S_{3,\lambda}, E^\sharp_\Omega[{\mathrm{div}} E]]
-v_\Omega^\pm[S_{3,\lambda},\nu_\Omega\cdot E]
\end{eqnarray}
in $\Omega^\pm$, where $\Omega^+\equiv \Omega$. Let $r\in]0,+\infty[$ be such that
\[
\overline{\Omega}\subseteq {\mathbb{B}}_3(0,r)\,.
\]
 Since $E\in C^{0,\alpha}(\overline{\Omega},{\mathbb{C}}^3)$, we have
\[
{\mathrm{curl}} E\in C^{-1,\alpha}(\overline{\Omega},{\mathbb{C}}^3)\,,
\qquad
{\mathrm{div}} E\in C^{-1,\alpha}(\overline{\Omega},{\mathbb{C}})
\]
and Proposition \ref{prop:dvpsnecr-1a} implies that
\begin{eqnarray*}
&&{\mathcal{P}}_\Omega^+[S_{3,\lambda}, E^\sharp_\Omega[{\mathrm{curl}} E]]\in C^{1,\alpha}(\overline{\Omega},{\mathbb{C}}^3)\,,\ 
{\mathcal{P}}_\Omega^+[S_{3,\lambda}, E^\sharp_\Omega[{\mathrm{div}} E]]  \in C^{1,\alpha}(\overline{\Omega},{\mathbb{C}})\,,
\\ \nonumber
&&{\mathcal{P}}_\Omega^-[S_{3,\lambda}, E^\sharp_\Omega[{\mathrm{curl}} E]]_{|\overline{{\mathbb{B}}_3(0,r)}\setminus\Omega}\in C^{1,\alpha}(\overline{{\mathbb{B}}_3(0,r)}\setminus\Omega,{\mathbb{C}}^3)\,,
\\ \nonumber
&&
{\mathcal{P}}_\Omega^-[S_{3,\lambda}, E^\sharp_\Omega[{\mathrm{div}} E]]_{|\overline{{\mathbb{B}}_3(0,r)}\setminus\Omega}\in C^{1,\alpha}(\overline{{\mathbb{B}}_3(0,r)}\setminus\Omega,{\mathbb{C}})\,.
\end{eqnarray*}
Since
\[
\nu_\Omega\bar{\wedge} E\in C^{0,\alpha}(\partial\Omega,{\mathbb{C}}^3)
\,\qquad
\nu_\Omega\cdot E\in C^{0,\alpha}(\partial\Omega,{\mathbb{C}})\,,
\]
a classical result on the single layer potential   implies that
\begin{eqnarray*}
&&
v_\Omega^+[S_{3,\lambda},\nu_\Omega\bar{\wedge} E]\in C^{1,\alpha}(\overline{\Omega},{\mathbb{C}}^3)\,,
 \qquad
v_\Omega^+[S_{3,\lambda},\nu_\Omega\cdot E]\in C^{1,\alpha}(\overline{\Omega},{\mathbb{C}}) 
\\ \nonumber
&&v_\Omega^-[S_{3,\lambda},\nu_\Omega\bar{\wedge} E]_{|\overline{{\mathbb{B}}_3(0,r)}\setminus\Omega}\in C^{1,\alpha}(\overline{{\mathbb{B}}_3(0,r)}\setminus\Omega,{\mathbb{C}}^3)
\,,\\ \nonumber
&&
v_\Omega^-[S_{3,\lambda},\nu_\Omega\cdot E]_{|\overline{{\mathbb{B}}_3(0,r)}\setminus\Omega}\in C^{1,\alpha}(\overline{{\mathbb{B}}_3(0,r)}\setminus\Omega,{\mathbb{C}}) 
\end{eqnarray*}
(see Miranda \cite{Mi65}, \cite[Thm.~7.1]{DoLa17}).
Hence, the equalities (\ref{thm:cudirepl2}) and (\ref{thm:cudirepl3})  imply that
\begin{eqnarray*} 
\lefteqn{
E=-{\mathrm{curl}}\,
{\mathcal{P}}_\Omega^+[S_{3,\lambda}, E^\sharp_\Omega[{\mathrm{curl}} E]] 
+{\mathrm{curl}}\,v_\Omega^+[S_{3,\lambda},\nu_\Omega\bar{\wedge} E] 
}
\\ \nonumber
&&\quad
+D{\mathcal{P}}_\Omega^+[S_{3,\lambda}, E^\sharp_\Omega[{\mathrm{div}} E]]
-Dv_\Omega^+[S_{3,\lambda},\nu_\Omega\cdot E]+\lambda{\mathcal{P}}_\Omega^+[S_{3,\lambda}, E]\quad\text{in}\ \Omega\,,
\\ \nonumber
\lefteqn{
0=-{\mathrm{curl}}\,
{\mathcal{P}}_\Omega^-[S_{3,\lambda}, E^\sharp_\Omega[{\mathrm{curl}} E]] 
+{\mathrm{curl}}\,v_\Omega^-[S_{3,\lambda},\nu_\Omega\bar{\wedge} E] 
}
\\ \nonumber
&&\quad
+D{\mathcal{P}}_\Omega^-[S_{3,\lambda}, E^\sharp_\Omega[{\mathrm{div}} E]]
-Dv_\Omega^-[S_{3,\lambda},\nu_\Omega\cdot E]+\lambda{\mathcal{P}}_\Omega^-[S_{3,\lambda}, E]\quad\text{in}\ \Omega^-\,,
\end{eqnarray*}
and thus the proof of formulas (\ref{thm:cudirepl1a}), (\ref{thm:cudirepl1b}) is complete.\hfill  $\Box$ 

\vspace{\baselineskip}

\section{A preliminary statement  on the Neumann problem}\label{sec:prelNeum}

We first introduce the following preliminary statement that follows by   \cite[Thm.~7.23]{DaLaMu21} 
{\color{black}for	}  $m=1$ and by classical results in potential theory for $m\geq 2$. (See Section \ref{sec:prelnot} for the notation on the connected components $\Omega_j$  of $\Omega$ with $j\in \{1,\dots,\varkappa^+\}$).
  \begin{proposition}\label{prop:gintneuposchma}
 Let   $\alpha\in ]0,1[$, $m\in{\mathbb{N}}\setminus\{0\}$. Let $\Omega$ be a bounded open  subset of ${\mathbb{R}}^{n}$ of class $C^{m,\alpha}$. Let
 \begin{eqnarray}\nonumber
\lefteqn{X^{m,\alpha}\equiv\left\{u\in C^{m,\alpha}(\overline{\Omega}) :\,
\int_{\Omega_j}u\,dx=0\ \forall j\in \{1,\dots,\varkappa^+\}
\right\}\,,}
\\ \nonumber
\lefteqn{Y_{m,\alpha}\equiv\biggl\{
 (f,g)\in C^{m-2,\alpha}( \overline{\Omega})\times C^{m-1,\alpha}(\partial\Omega):\,
 }
 \\ \label{prop:gintneuposchma0}
 &&\qquad
\int_{\partial\Omega_j}g\,d\sigma= \langle E^\sharp_{\Omega_j}[f_{|\Omega_j}],1\rangle \ \forall j\in \{1,\dots,\varkappa^+\}
 \biggr\}\,,
\end{eqnarray}
where $\langle E^\sharp_{\Omega_j}[f_{|\Omega_j}],1\rangle=\int_{\Omega_j}f\,dx$ for all $j\in \{1,\dots,\varkappa^+\}$ in case $m\geq 2$ (cf. (\ref{prop:nschext3})). 
Then there exists
$ 
G_{m,\alpha}\in {\mathcal{L}}\left(
Y_{m,\alpha}
,C^{m,\alpha}(\overline{\Omega}) 
\right)
$
such that 
\begin{equation}\label{prop:gintneuposchma1}
(\Delta, \partial_{\nu_\Omega} )[G_{m,\alpha}(f,g)]=(f,g)\qquad\forall (f,g)\in Y_{m,\alpha}\,.
\end{equation}
\end{proposition}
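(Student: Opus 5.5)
Since the space $Y_{m,\alpha}$ encodes the compatibility conditions of the interior Neumann problem component by component, the plan is to reduce to one connected component $\Omega_j$ at a time. Then we construct a solution by a volume potential plus a single layer potential, normalize it by subtracting constants, and check that everything depends linearly and continuously on $(f,g)$. Because the $\Omega_j$ have pairwise disjoint closures (as $\Omega$ is of class $C^{m,\alpha}$), a function on $\overline{\Omega}$ is of class $C^{m,\alpha}$ if and only if its restrictions to the $\overline{\Omega_j}$ are. So it suffices to construct $G_{m,\alpha}$ on each $\Omega_j$ and glue.

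\textbf{Case $m=1$.} Here $f\in C^{-1,\alpha}(\overline{\Omega})$ and $g\in C^{0,\alpha}(\partial\Omega)$. We set
\[
u_0\equiv{\mathcal{P}}_\Omega^+[S_n,E^\sharp_\Omega[f]]\,,
\]
which lies in $C^{1,\alpha}(\overline{\Omega})$, depends continuously on $f$, and satisfies $\Delta u_0=f$ by Proposition \ref{prop:dvpsnecr-1a} with $\lambda=0$. Its distributional normal derivative $\partial_{\nu_\Omega}u_0$ is then the classical one, by (\ref{lem:conoderdeducl1}), and it belongs to $C^{0,\alpha}(\partial\Omega)$.

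Next we look for $u=u_0+v_\Omega^+[S_n,\mu]$ with $\mu\in C^{0,\alpha}(\partial\Omega)$. By the jump relation for the normal derivative of the single layer, this leads to the boundary equation
\[
-\tfrac12\mu+W^t_\Omega[S_n,\mu]=g-\partial_{\nu_\Omega}u_0\,.
\]
The operator $-\frac12 I+W^t_\Omega$ is Fredholm of index zero on $C^{0,\alpha}(\partial\Omega)$, by the compactness in Remark \ref{rem:wtnotation}. Its range consists exactly of the functions with zero integral on each $\partial\Omega_j$; the cokernel is spanned by the characteristic functions of the $\partial\Omega_j$, as in the classical Fredholm theory for the interior Neumann problem.

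It remains to check the compatibility condition. By Proposition \ref{prop:node1adeq} with $v=1$ on $\partial\Omega_j$, we have $\int_{\partial\Omega_j}\partial_{\nu}u_0\,d\sigma=\langle E^\sharp_{\Omega_j}[f_{|\Omega_j}],1\rangle$. Together with the defining condition of $Y_{1,\alpha}$, this shows that the right-hand side has zero mean on each $\partial\Omega_j$. We then choose a bounded right inverse of $-\frac12 I+W^t_\Omega$ on its closed range, which exists because the range is closed and of finite codimension. Finally we subtract from $u$ its average on each $\Omega_j$, so that the result lies in $X^{1,\alpha}$. All the maps involved are linear and continuous, so $G_{1,\alpha}$ is continuous.

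\textbf{Case $m\ge 2$.} The same construction works. Now $f\in C^{m-2,\alpha}$, so Theorem \ref{thm:nwtdma} gives $u_0\in C^{m,\alpha}(\overline{\Omega})$ with $\partial_\nu u_0\in C^{m-1,\alpha}(\partial\Omega)$. The classical regularity of $v^+_\Omega[S_n,\cdot]$ from $C^{m-1,\alpha}(\partial\Omega)$ to $C^{m,\alpha}(\overline{\Omega})$, together with the Fredholm property of $-\frac12I+W^t_\Omega$ on $C^{m-1,\alpha}(\partial\Omega)$ (again by Remark \ref{rem:wtnotation}), completes the argument.

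\textbf{Main obstacle.} The delicate point is the case $m=1$, where the identification of the range of $-\frac12I+W^t$ must be combined with the distributional Green identity for $E^\sharp$. In particular, one must make sure that the flux of $u_0$ through each $\partial\Omega_j$ equals $\langle E^\sharp_{\Omega_j}[f],1\rangle$ and not merely the value of $f$ paired inside the domain. For this step I would rely on \cite[Thm.~7.23]{DaLaMu21}, as the paper indicates.
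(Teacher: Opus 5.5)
Your proposal is correct, but it is organized differently from the paper's proof.

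\textbf{The paper's route.} The paper first notes that $(\Delta,\partial_{\nu_\Omega})$ restricted to $X^{m,\alpha}$ is injective (uniqueness for the Neumann problem up to locally constant functions) with values in $Y_{m,\alpha}$. It then proves surjectivity and takes $G_{m,\alpha}$ to be the inverse, whose continuity follows from the Open Mapping Theorem. For $m=1$ surjectivity is simply quoted from \cite[Thm.~7.23]{DaLaMu21}. For $m\geq 2$ it is proved by exactly your construction: the volume potential plus $v^+_\Omega[S_n,\eta]$ with $-\frac{1}{2}\eta+W^t_\Omega[S_n,\eta]=g-\partial_{\nu_\Omega}{\mathcal{P}}^+_\Omega[S_n,f]$, the range being identified by the Fredholm alternative in the pairing $\langle C^{m-1,\alpha}(\partial\Omega),C^{m,\alpha}(\partial\Omega)\rangle$.

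\textbf{Your route.} You build a bounded right inverse directly as a composition of bounded operators, so you never need the uniqueness theorem. You also treat $m=1$ by the same construction, using ${\mathcal{P}}^+_\Omega[S_n,E^\sharp_\Omega[f]]$ from Proposition \ref{prop:dvpsnecr-1a} and the distributional flux identity instead of a citation.

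\textbf{What each buys.} The paper's route additionally gives that $G_{m,\alpha}$ is the unique inverse with values in $X^{m,\alpha}$. Yours gives an explicit potential-theoretic formula and a uniform treatment of all $m\geq 1$.

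\textbf{Points to tighten.}
\begin{enumerate}
\item Your approach needs a bounded right inverse of $-\frac{1}{2}I+W^t_\Omega$ on its range. The reason it exists is not the finite codimension of the range. It is that the kernel is finite dimensional, hence has a closed complement $M$; the restriction to $M$ is then a bounded bijection onto the closed range, and the Open Mapping Theorem applies.
\item The component-by-component reduction you announce is never actually used.
\item The flux identity in your ``main obstacle'' needs no appeal to \cite{DaLaMu21}. It follows from Theorem \ref{thm:div0a} on each $\Omega_j$ with $A=\nabla u_0$, because $\Delta u_0=f$ in ${\mathcal{D}}'(\Omega_j)$ and $E^\sharp_{\Omega_j}$ does not depend on the representation of $f$. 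Your own argument via Proposition \ref{prop:node1adeq} also works, with $E_\Omega={\mathcal{G}}_{\Omega,d,+}$ and ${\mathcal{G}}_{\Omega,d,+}[\chi_{\partial\Omega_j}]=\chi_{\overline{\Omega_j}}$.
\end{enumerate}
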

{\bf Proof.} By the classical uniqueness theorem up to locally constant functions for the Neumann problem and by the known compatibility conditions for the data of the Neumann problem,  the restriction of $(\Delta, \partial_{\nu_\Omega} )$ to the closed subspace $X^{m,\alpha}$ 
of the Banach space $C^{m,\alpha}(\overline{\Omega})$ is injective and maps $X^{m,\alpha}$  to the Banach space 
$Y_{m,\alpha}$ (cf.~\textit{e.g.}, \cite[Thms.~4.1, 6.2, 7.23]{DaLaMu21}). Thus it suffices to show that the restriction of  $(\Delta, \partial_{\nu_\Omega} )$ to $X^{m,\alpha}$ is surjective onto $Y_{m,\alpha}$. Indeed, the Open Mapping Theorem implies that the inverse $G_{m,\alpha}$ of such a restriction satisfies the conditions of the statement.

For a proof of the surjectivity of $(\Delta, \partial_{\nu_\Omega} )$ in case  $m=1$, we refer to \cite[Thm.~7.23]{DaLaMu21}.  If $m\geq 2$ and $(f,g)\in Y_{m,\alpha}$, then (\ref{prop:nschext3}) and Theorem \ref{thm:nwtdma} (i) on the volume potential imply that
\[
{\mathcal{P}}_\Omega^+[S_n,E^\sharp_\Omega[f ]]={\mathcal{P}}_\Omega^+[S_n,f ]\in C^{m,\alpha}(\overline{\Omega})\,.
\]
Thus it suffices to show that  the Neumann problem
\begin{equation}
 \label{prop:gintneuposchma2}
\left\{
\begin{array}{ll}
\Delta h=0 &{\mathrm{in}}\ \Omega\,,
\\
 \partial_{\nu_\Omega} h =g- \frac{\partial}{\partial_{\nu_\Omega}}{\mathcal{P}}_\Omega^+[{\color{black}S_n,}f]    &{\mathrm{on}}\ \partial\Omega 
\end{array}
\right.
\end{equation}
has a solution $h\in C^{m,\alpha}(\overline{\Omega})$. Indeed, possibly adding a function which is constant on each connected components of $\Omega$ to the function $h+{\mathcal{P}}_\Omega^+[S_n,f]$, we obtain a solution $u\in X^{m,\alpha}$ of equation (\ref{prop:gintneuposchma1}). By the Divergence Theorem in $\Omega_j$ and by the membership of $(f,g)$ in $ Y_{m,\alpha}$, we have
\begin{equation}
 \label{prop:gintneuposchma3}
\int_{\partial\Omega_j}g- \frac{\partial}{\partial \nu_\Omega}{\mathcal{P}}_\Omega^+[{\color{black}S_n,}f] \,d\sigma=0
\end{equation}
for all $j\in \{1,\dots,\varkappa^+\}$. Now let  $W_\Omega[S_n,\cdot]$ be the boundary integral operator that corresponds to the double layer potential as in (\ref{thm:dlaybdry}) with $\lambda=0$. It is known that 
$W_\Omega[S_n,\cdot]$ is compact in $C^{m,\alpha}(\partial\Omega)$ and that its transpose operator
$W_\Omega^t[S_n,\cdot]$ is compact from $C^{m-1,\alpha}(\partial\Omega)$ to itself (cf.~\textit{e.g.}, Dondi and the author \cite[Cors.~9.1, 10.1]{DoLa17}).  Then  the Fredholm Alternative Theorem in the duality pairing   
\begin{equation}\label{prop:KerI+W21}
\langle C^{m-1,\alpha}(\partial\Omega), C^{m,\alpha}(\partial\Omega)\rangle
\end{equation}
in the form of Wendland \cite{We67}, \cite{We70} (cf. Kress~\cite[Thm.~4.17]{Kr14}) together with the coincidence of the null space
\[
\left\{\mu\in C^{m,\alpha}(\partial\Omega):\,\left(-\frac{1}{2}I+W_{\Omega}\right)[\mu]=0\right\}
\]
with the set of the functions from $\partial\Omega$ to $\mathbb{R}$ which are constant on $\partial\Omega_j$ for all $j\in\{1,\dots,\varkappa^+\}$  (cf.~\textit{e.g.}, \cite[Thm.~A.18]{La24b}) implies that the integral equation
\[
-\frac{1}{2}\eta +W_\Omega^t[S_n,\eta]=g- \frac{\partial}{\partial \nu_\Omega}{\mathcal{P}}_\Omega^+[f]  
\]
has a solution $\eta\in C^{m-1,\alpha}(\partial\Omega)$. Then $h\equiv v_\Omega^+[S_n,\eta]$ belongs to $C^{m,\alpha}(\overline\Omega)$ and the classical jump formula  for the normal derivative   of the  single layer potential implies that $h$ solves the above problem (\ref{prop:gintneuposchma2}) 
and thus the proof is complete
(cf.~\textit{e.g.}, \cite[Thm.~7.1]{DoLa17}).\hfill  $\Box$ 

\vspace{\baselineskip}

\section{The Helmholtz-Weil decomposition in H\"older spaces}\label{sec:helweyl}

 We wish to prove the Helmholtz-Weyl decomposition   in H\"older spaces
by writing an explicit form of the projection operators   that develops from an idea of von Wahl \cite{vo90} in the context of Lebesgue spaces. Here we have to modify the arguments of von Wahl \cite{vo90} also because the functions of class $C^\infty$ with compact support are not dense in a H\"{o}lder space. In particular, we exploit a formula for the projectors that is a modification of that of Wahl \cite{vo90}. In order to prove it, we resort to 
the fundamental theorem of vector calculus
 for H\"older continuous vector fields in the form of Theorem \ref{thm:cudirepl}. 
We first introduce the following known elementary statement. For the convenience of the reader, we include a proof. 
\begin{lemma}\label{lem:predeco} Let $\Omega$ be a bounded open Lipschitz subset of ${\mathbb{R}}^n$. Let $\alpha\in ]0,1]$. Let
\begin{eqnarray} \nonumber
\lefteqn{
C^{0,\alpha}_{{\mathrm{div}}=0,t}(\overline{\Omega},{\mathbb{C}}^3)\equiv\biggl\{
E\in C^{0,\alpha}(\overline{\Omega},{\mathbb{C}}^3):\,
}\\ \nonumber
&&\qquad\qquad\qquad\qquad 
\nu_\Omega\cdot E_{|\partial\Omega}=0\ \text{on}\ \partial\Omega\,, \ 
{\mathrm{div}}\,E=0\ \text{in}\ \Omega
\biggr\}\,,
\\ \label{lem:predeco1}
\lefteqn{
\Xi_2=C^{0,\alpha}_{\nabla}(\overline{\Omega},{\mathbb{C}}^3)\equiv \biggl\{\nabla \phi:\, \phi\in C^{1,\alpha}(\overline{\Omega},{\mathbb{C}})\biggr\}\,.
}
\end{eqnarray}
Then the following statements hold.
\begin{enumerate}
\item[(i)] $C^{0,\alpha}_{{\mathrm{div}}=0,t}(\overline{\Omega},{\mathbb{C}}^3)$ and $C^{0,\alpha}_{\nabla}(\overline{\Omega},{\mathbb{C}}^3)$ are closed subspaces of $C^{0,\alpha}(\overline{\Omega},{\mathbb{C}}^3)$. 
\item[(ii)]  If $E$ belongs to $C^{0,\alpha}_{{\mathrm{div}}=0,t}(\overline{\Omega},{\mathbb{C}}^3)$ and to $C^{0,\alpha}_{\nabla}(\overline{\Omega},{\mathbb{C}}^3)$, then the conjugate vector field $\overline{E}$ belongs to $C^{0,\alpha}_{{\mathrm{div}}=0,t}(\overline{\Omega},{\mathbb{C}}^3)$ and to $C^{0,\alpha}_{\nabla}(\overline{\Omega},{\mathbb{C}}^3)$, respectively.
\item[(iii)] If $E\in C^{0,\alpha}_{{\mathrm{div}}=0,t}(\overline{\Omega},{\mathbb{C}}^3)$ and $\phi\in C^{1,\alpha}(\overline{\Omega},{\mathbb{C}})$, then 
$\int_\Omega E\cdot \nabla\phi\,dx=0$. 
\item[(iv)] $C^{0,\alpha}_{{\mathrm{div}}=0,t}(\overline{\Omega},{\mathbb{C}}^3)\cap C^{0,\alpha}_{\nabla}(\overline{\Omega},{\mathbb{C}}^3)=\{0\}$.
\end{enumerate}
\end{lemma}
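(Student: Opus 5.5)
We prove the four statements in order, using only the definitions and the distributional Divergence Theorem of Section \ref{sec:prelnot} (Proposition \ref{prop:gendidis}). Implicitly $n=3$ here, since the fields take values in ${\mathbb{C}}^3$.

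\emph{Statement (i).} Let $\{E_j\}$ be a sequence in $C^{0,\alpha}_{{\mathrm{div}}=0,t}(\overline{\Omega},{\mathbb{C}}^3)$ converging to $E$ in $C^{0,\alpha}(\overline{\Omega},{\mathbb{C}}^3)$. Uniform convergence on $\partial\Omega$ gives $\nu_\Omega\cdot E=0$ almost everywhere on $\partial\Omega$; this is all that is needed, since $\nu_\Omega$ is only defined a.e. for Lipschitz sets. Convergence in ${\mathcal{D}}'(\Omega)$ gives ${\mathrm{div}}\,E=0$.

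For the gradients, the key point is that the map $\phi\mapsto\nabla\phi$ from $C^{1,\alpha}(\overline{\Omega})$ to $C^{0,\alpha}(\overline{\Omega},{\mathbb{C}}^3)$ has kernel equal to the locally constant functions. Let $X$ be the closed subspace of those $\phi$ with $\int_{\Omega_j}\phi\,dx=0$ for all $j\in\{1,\dots,\varkappa^+\}$. It suffices to prove a Poincaré-type estimate
\[
\|\phi\|_{C^{1,\alpha}(\overline{\Omega})}\le c\|\nabla\phi\|_{C^{0,\alpha}(\overline{\Omega},{\mathbb{C}}^3)}\qquad\forall\phi\in X.
\]
Once this holds, the range of the gradient map is closed. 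To prove it, we use that a bounded Lipschitz connected open set is connected by rectifiable paths of uniformly bounded length in $\Omega_j$ (quasiconvexity). This gives ${\mathrm{osc}}_{\Omega_j}\phi\le c\sup|\nabla\phi|$, and the mean-zero condition then bounds $\sup|\phi|$.

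This step is the main technical obstacle. If quasiconvexity feels delicate, an alternative is a compactness argument. Take a sequence $\phi_k\in X$ with $\|\phi_k\|_{C^{1,\alpha}}=1$ and $\nabla\phi_k\to0$. By Ascoli–Arzelà, a subsequence converges in $C^1$ to some $\phi$ with $\nabla\phi=0$ and zero means on each $\Omega_j$, so $\phi=0$. This contradicts the normalization, because the $C^{1,\alpha}$ norm of $\phi_k$ is controlled by its sup norm plus $\|\nabla\phi_k\|_{C^{0,\alpha}}$.

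\emph{Statement (ii).} Conjugation commutes with ${\mathrm{div}}$ in the sense of distributions and with the real field $\nu_\Omega$. Moreover $\overline{\nabla\phi}=\nabla\overline{\phi}$ with $\overline{\phi}\in C^{1,\alpha}(\overline{\Omega})$.

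\emph{Statement (iii).} Apply Proposition \ref{prop:gendidis} with $A=E$ and $v=\phi$. Since ${\mathrm{div}}\,E=0$, we have $E^\sharp_\Omega[{\mathrm{div}}\,E]=0$, and $\nu_\Omega\cdot E=0$ on $\partial\Omega$. Therefore $\int_\Omega E\cdot D\phi\,dx=0$.

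For $\alpha=1$, Proposition \ref{prop:gendidis} is stated only for $\alpha<1$. We reduce to that case by using $C^{0,1}\subseteq C^{0,\beta}$ and $C^{1,1}\subseteq C^{1,\beta}$ for any $\beta\in]0,1[$.

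\emph{Statement (iv).} Let $E=\nabla\phi$ lie in both spaces. By (ii), $\overline{E}=\nabla\overline{\phi}$ also lies in $C^{0,\alpha}_\nabla(\overline{\Omega},{\mathbb{C}}^3)$. Applying (iii) with the test function $\overline{\phi}$ gives
\[
\int_\Omega|E|^2\,dx=\int_\Omega E\cdot\nabla\overline{\phi}\,dx=0.
\]
Since $E$ is continuous, it follows that $E=0$.
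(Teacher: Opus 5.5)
Your proposal is correct. For (ii), (iii), (iv) and for the closedness of $C^{0,\alpha}_{{\mathrm{div}}=0,t}(\overline{\Omega},{\mathbb{C}}^3)$ it is essentially the paper's argument: Proposition \ref{prop:gendidis} with $A=E$, $v=\phi$, then testing with $\overline{\phi}$.

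The real difference is in how you prove that $C^{0,\alpha}_{\nabla}(\overline{\Omega},{\mathbb{C}}^3)$ is closed. The paper writes this space as an intersection, over all closed $C^1$ paths $\gamma$ in $\Omega$, of the sets $\{g:\int_\gamma g\,dl=0\}$. Each of these sets is closed under uniform convergence. A field in the intersection has a potential $\phi\in C^1(\Omega)$, which is Lipschitz because $g$ is bounded, hence extends to $\overline{\Omega}$ and lies in $C^{1,\alpha}(\overline{\Omega})$. You instead bound the gradient from below on the complement $X$ of the locally constant functions, and conclude that its range is closed.

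Both routes need the same geometric fact about bounded Lipschitz sets: on each component, a bounded gradient controls the oscillation (finite inner diameter). The paper uses this tacitly when it asserts that $\phi$ is Lipschitz. Your version gives more, namely a bounded linear inverse $\nabla\phi\mapsto\phi$ from $C^{0,\alpha}_{\nabla}(\overline{\Omega},{\mathbb{C}}^3)$ onto $X$. The paper's version is softer and yields an intrinsic characterization of gradient fields.

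Your compactness fallback does not actually avoid the geometry. Ascoli--Arzel\`a on $\overline{\Omega}$ requires the $\phi_k$ to be equicontinuous up to the boundary, which is the same property.

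Two of your details are more careful than the paper's text. You use only $\nu_\Omega\in L^\infty(\partial\Omega)$, whereas the paper invokes $\nu_\Omega\in C^{0,\alpha}(\partial\Omega)$, which is unavailable for merely Lipschitz $\Omega$. You also handle $\alpha=1$ in (iii), a case that Proposition \ref{prop:gendidis} does not cover.
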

{\bf Proof.} (i) Since the components of $\nu_\Omega$ belong to  $C^{0,\alpha}(\partial\Omega)$ and the pointwise product is continuous in the space $C^{0,\alpha}(\partial\Omega)$, the map from
$C^{0,\alpha}(\overline{\Omega},{\mathbb{C}}^3)$ to $C^{0,\alpha}(\partial\Omega)$ that takes $E$ to $\nu_\Omega\cdot E_{|\partial\Omega}$ is continuous. Since the divergence is continuous from $C^{0,\alpha}(\overline{\Omega},{\mathbb{C}}^3)$ to $C^{-1,\alpha}(\overline{\Omega},{\mathbb{C}}^3)$, we conclude that
$C^{0,\alpha}_{{\mathrm{div}}=0,t}(\overline{\Omega},{\mathbb{C}}^3)$ is a closed subspace of $C^{0,\alpha}(\overline{\Omega},{\mathbb{C}}^3)$. Next, we note that
\begin{equation} \label{lem:predeco2}
C^{0,\alpha}_{\nabla}(\overline{\Omega},{\mathbb{C}}^3)=\bigcap_{\gamma\ \text{is\ a}\ C^1\ \text{closed\ path\ in}\ \Omega }\left\{g\in C^{0,\alpha}(\overline{\Omega},{\mathbb{C}}^3):\,
\int_\gamma g\,dl=0\right\}\,,
\end{equation}
where the integrals in the right hand side are line integrals. Indeed, the space $C^{0,\alpha}_{\nabla}(\overline{\Omega},{\mathbb{C}}^3)$ is well known to be contained in the right hand side and if $g$ belongs to the right hand side, then there exists $\phi\in C^1(\Omega)$ such that $g=\nabla\phi$. Since $g$ is bounded, we conclude that $\phi$ is Lipschitz continuous and accordingly $\phi$ has a continuous extension to $\overline{\Omega}$. Since $g=\nabla\phi$, we conclude that $\phi\in C^{1,\alpha}(\overline{\Omega})$, \textit{i.e.}, $g$ belongs to $C^{0,\alpha}_{\nabla}(\overline{\Omega},{\mathbb{C}}^3)$ and thus the proof of equality  (\ref{lem:predeco2}) is complete. Since
the norm of $C^{0,\alpha}(\overline{\Omega},{\mathbb{C}}^3)$ is stronger than the uniform convergence, 
each set in the right hand side of (\ref{lem:predeco2}) is closed in $C^{0,\alpha}(\overline{\Omega},{\mathbb{C}}^3)$ and accordingly the intersection in the right hand side of (\ref{lem:predeco2}) is closed in $C^{0,\alpha}(\overline{\Omega},{\mathbb{C}}^3)$. The validity of (ii) is an immediate consequence of the definition of $C^{0,\alpha}_{{\mathrm{div}}=0,t}(\overline{\Omega},{\mathbb{C}}^3)$ and of $C^{0,\alpha}_{\nabla}(\overline{\Omega},{\mathbb{C}}^3)$, respectively.

(iii) By Proposition \ref{prop:gendidis} and by our assumptions on $E$ and $\phi$,  we have
\[
\int_\Omega E\cdot \nabla\phi\,dx
=\int_{\partial\Omega}(\nu_\Omega\cdot E)  \phi\,d\sigma\
-\langle E^\sharp_\Omega[{\mathrm{div}}\, E],\phi\rangle =0\,.
\]
(iv) Let  $E\in C^{0,\alpha}_{{\mathrm{div}}=0,t}(\overline{\Omega},{\mathbb{C}}^3)\cap C^{0,\alpha}_{\nabla}(\overline{\Omega},{\mathbb{C}}^3)$. By statement (ii),   $\overline{E}$ belongs to $ C^{0,\alpha}_{\nabla}(\overline{\Omega},{\mathbb{C}}^3)$ and accordingly, statement (iii) implies that $\int_\Omega E\cdot \overline{E}\,dx=0$. Hence, $E=0$ and statement (iv) holds true.\hfill  $\Box$ 

\vspace{\baselineskip}

We are now ready to prove the following statement that extends to H\"{o}lder spaces Theorem 4.1 of
von Wahl \cite[p.~26]{vo90} in the context of Lebesgue spaces. We also note that the formulas
(\ref{thm:hewedeco0a2})--(\ref{thm:hewedeco0a4}) below are  modifications of corresponding formulas of von Wahl \cite[p.~27]{vo90}.
For the basic definitions of vector sums and projections, we refer, \textit{e.g.}, to Rudin~\cite[\S 4.20, \S 5.15]{Ru91}

\begin{theorem}\label{thm:hewedeco0a}
 Let $\Omega $ be a bounded open  subset of ${\mathbb{R}}^3$ of class $C^{1,\alpha}$. Then 
\begin{equation}\label{thm:hewedeco0a1}
 C^{0,\alpha}(\overline{\Omega},{\mathbb{C}}^3)=C^{0,\alpha}_{{\mathrm{div}}=0,t}(\overline{\Omega},{\mathbb{C}}^3)\oplus C^{0,\alpha}_{\nabla}(\overline{\Omega},{\mathbb{C}}^3)\,,
\end{equation}
where the direct sum is topological. Let  $\Pi_1$ be the map from  $C^{0,\alpha}(\overline{\Omega},{\mathbb{C}}^3)$ to $C^{0,\alpha}_{{\mathrm{div}}=0,t}(\overline{\Omega},{\mathbb{C}}^3)$ that is delivered by the formula
\begin{equation}\label{thm:hewedeco0a2}
\Pi_1[E]\equiv 
-{\mathrm{curl}} \biggl\{
{\mathcal{P}}_\Omega^+[S_3,E^\sharp_\Omega[{\mathrm{curl}} E]] 
-v^+_\Omega[S_3,\nu_\Omega \bar{\wedge}E] \biggr\}+\nabla H[E]
\end{equation}
where
 $H[E]$ is the only solution in $C^{1,\alpha}(\overline{\Omega})$ of the Neumann problem
 \begin{equation}\label{thm:hewedeco0a3}
\left\{
\begin{array}{ll}
 \Delta H[E]=0 &\text{in}\ \Omega\,,
 \\
 \frac{\partial}{\partial\nu_\Omega}H[E] &
 \\\quad
 =\nu_\Omega\cdot {\mathrm{curl}} \biggl\{
{\mathcal{P}}_\Omega^+[S_{3},E^\sharp_\Omega[{\mathrm{curl}} E]] 
-v^+_\Omega[S_3,\nu_\Omega \bar{\wedge}E]\biggr\} 
  &\text{on}\ \partial\Omega\,,
  \\
  \int_{\Omega_j}H[E]\,dx=0 & \forall j\in\{1,\dots,\varkappa^+\}\,,
\end{array}
\right.
\end{equation}
for all $E\in C^{0,\alpha}(\overline{\Omega},{\mathbb{C}}^3)$ (For the definition of $\varkappa^+$, see right after (\ref{eq:exto})). Then $\Pi_1$
is a linear and continuous projection of $C^{0,\alpha}(\overline{\Omega},{\mathbb{C}}^3)$ onto its subspace  $C^{0,\alpha}_{{\mathrm{div}}=0,t}(\overline{\Omega},{\mathbb{C}}^3)$ along $C^{0,\alpha}_{\nabla}(\overline{\Omega},{\mathbb{C}}^3)$, \textit{i.e.}, 
\[
C^{0,\alpha}_{\nabla}(\overline{\Omega},{\mathbb{C}}^3)={\mathrm{Ker}}\, \Pi_1
\]
and 
the map $\Pi_2$ from $C^{0,\alpha}(\overline{\Omega},{\mathbb{C}}^3)$ to $C^{0,\alpha}_{\nabla}(\overline{\Omega},{\mathbb{C}}^3)$ that is delivered by the formula
\begin{equation}\label{thm:hewedeco0a4}
\Pi_2[E]\equiv \nabla
\biggl\{
{\mathcal{P}}_\Omega^+[S_{3},E^\sharp_\Omega[{\mathrm{div}} E]]
-v^+_\Omega[S_{3},\nu_\Omega\cdot E]
\biggr\}-\nabla H[E]
\end{equation}
 for all $E\in C^{0,\alpha}(\overline{\Omega},{\mathbb{C}}^3)$
is a linear and continuous projection of $C^{0,\alpha}(\overline{\Omega},{\mathbb{C}}^3)$ onto its subspace  $C^{0,\alpha}_{\nabla}(\overline{\Omega},{\mathbb{C}}^3)$ along $C^{0,\alpha}_{{\mathrm{div}}=0,t}(\overline{\Omega},{\mathbb{C}}^3)$ , \textit{i.e.}, 
\[
C^{0,\alpha}_{{\mathrm{div}}=0,t}(\overline{\Omega},{\mathbb{C}}^3)={\mathrm{Ker}}\, \Pi_2 
\,.
\]
 \end{theorem}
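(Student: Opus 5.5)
The plan rests on Theorem \ref{thm:cudirepl} with $\lambda=0$, which splits every $E\in C^{0,\alpha}(\overline{\Omega},{\mathbb{C}}^3)$ as $E=-{\mathrm{curl}}\,A[E]+\nabla B[E]$ in $\Omega$. Here
\[
A[E]\equiv {\mathcal{P}}_\Omega^+[S_3,E^\sharp_\Omega[{\mathrm{curl}} E]]-v^+_\Omega[S_3,\nu_\Omega\bar{\wedge}E]\,,\qquad B[E]\equiv {\mathcal{P}}_\Omega^+[S_3,E^\sharp_\Omega[{\mathrm{div}} E]]-v^+_\Omega[S_3,\nu_\Omega\cdot E]\,.
\]
Since $\Pi_1[E]=-{\mathrm{curl}}\,A[E]+\nabla H[E]$ and $\Pi_2[E]=\nabla B[E]-\nabla H[E]$, we get $\Pi_1+\Pi_2=I$ directly. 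Everything then reduces to four checks: $H$ is well defined and continuous, $\Pi_1$ maps into $C^{0,\alpha}_{{\mathrm{div}}=0,t}$, $\Pi_2$ maps into $C^{0,\alpha}_\nabla$, and Lemma \ref{lem:predeco} (iv) yields the projection properties.

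\emph{Step 1: continuity and well-posedness of $H$.} By Proposition \ref{prop:dvpsnecr-1a} (ii), the continuity of $E^\sharp_\Omega$, the continuity of ${\mathrm{curl}}$ into $C^{-1,\alpha}$, and the classical continuity of $v^+_\Omega[S_3,\cdot]$ from $C^{0,\alpha}(\partial\Omega)$ to $C^{1,\alpha}(\overline{\Omega})$, the map $E\mapsto A[E]$ is continuous into $C^{1,\alpha}(\overline{\Omega},{\mathbb{C}}^3)$. The same holds for $B$. Hence ${\mathrm{curl}}\,A[E]\in C^{0,\alpha}$, and the Neumann datum $g\equiv\nu_\Omega\cdot{\mathrm{curl}}\,A[E]$ lies in $C^{0,\alpha}(\partial\Omega)$ and depends continuously on $E$.

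To apply Proposition \ref{prop:gintneuposchma} with $m=1$ and $f=0$ we need the compatibility condition $\int_{\partial\Omega_j}g\,d\sigma=0$ for each $j$. We obtain it from Theorem \ref{thm:div0a} on $\Omega_j$ applied to the field ${\mathrm{curl}}\,A[E]\in C^{0,\alpha}(\overline{\Omega_j})$: it gives $\int_{\partial\Omega_j}g\,d\sigma=\langle E^\sharp_{\Omega_j}[{\mathrm{div}}\,{\mathrm{curl}}\,A],1\rangle$. This vanishes because ${\mathrm{div}}\,{\mathrm{curl}}=0$ in ${\mathcal{D}}'$. Alternatively one can use Lemma \ref{lem:nucuv} together with $\langle M_{lr}[\cdot],1\rangle=0$. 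Then $H[E]=G_{1,\alpha}(0,g)$, made unique by the zero-mean conditions, and $E\mapsto H[E]$ is linear and continuous into $C^{1,\alpha}(\overline{\Omega})$. It follows that $\Pi_1,\Pi_2$ are linear and continuous.

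\emph{Step 2: ranges.} The relation $\Pi_2[E]=\nabla(B[E]-H[E])$ with $B-H\in C^{1,\alpha}$ gives $\Pi_2[E]\in C^{0,\alpha}_\nabla$ immediately. For $\Pi_1$, we have ${\mathrm{div}}\,\Pi_1[E]=-{\mathrm{div}}\,{\mathrm{curl}}\,A+\Delta H=0$ in ${\mathcal{D}}'(\Omega)$. On the boundary, $\nu_\Omega\cdot\Pi_1[E]=-\nu_\Omega\cdot{\mathrm{curl}}\,A+\partial_{\nu}H=0$ by the choice of the Neumann datum; this uses that for $H\in C^{1,\alpha}$ the distributional normal derivative coincides with the classical one, by (\ref{lem:conoderdeducl1}).

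\emph{Step 3: projection properties.} Lemma \ref{lem:predeco} (iv) gives $C^{0,\alpha}_{{\mathrm{div}}=0,t}\cap C^{0,\alpha}_\nabla=\{0\}$, so it suffices to show $\Pi_1=I$ on $C^{0,\alpha}_{{\mathrm{div}}=0,t}$; equivalently, $\Pi_2$ vanishes there. Let $E\in C^{0,\alpha}_{{\mathrm{div}}=0,t}$. Then ${\mathrm{div}}\,E=0$ and $\nu_\Omega\cdot E=0$, so $B[E]=0$ and $\Pi_2[E]=-\nabla H[E]$. Moreover $E=-{\mathrm{curl}}\,A[E]+\nabla B[E]=-{\mathrm{curl}}\,A[E]$, so the Neumann datum is $-\nu_\Omega\cdot E=0$. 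Hence $H[E]$ is locally constant with zero means, so $H[E]=0$, $\Pi_2[E]=0$, and $\Pi_1[E]=E$.

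Symmetrically, if $E=\nabla\phi\in C^{0,\alpha}_\nabla$, then $\Pi_1[E]$ lies in both subspaces. Indeed $\Pi_1[E]=E-\Pi_2[E]\in C^{0,\alpha}_\nabla$ by Step 2, and $\Pi_1[E]\in C^{0,\alpha}_{{\mathrm{div}}=0,t}$ also by Step 2. Therefore $\Pi_1[E]=0$. This yields ${\mathrm{Ker}}\,\Pi_1=C^{0,\alpha}_\nabla$, ${\mathrm{Ker}}\,\Pi_2=C^{0,\alpha}_{{\mathrm{div}}=0,t}$, $\Pi_1^2=\Pi_1$, and the topological direct sum (\ref{thm:hewedeco0a1}), since each summand is the image of a continuous projection.

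The main obstacle is Step 1: justifying the compatibility condition $\int_{\partial\Omega_j}\nu_\Omega\cdot{\mathrm{curl}}\,A\,d\sigma=0$ and identifying the boundary normal trace rigorously. In particular, one must check that ${\mathrm{div}}\,{\mathrm{curl}}\,A=0$ is legitimate when $A$ is only $C^{1,\alpha}$, which holds as an identity in ${\mathcal{D}}'(\Omega_j)$, so that Theorem \ref{thm:div0a} applies with a zero element of $C^{-1,\alpha}$.
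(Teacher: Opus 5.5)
Your proof is correct and follows the paper's route. Theorem \ref{thm:cudirepl} with $\lambda=0$ gives $\Pi_1+\Pi_2=I$, $H$ comes from Proposition \ref{prop:gintneuposchma} once the compatibility condition is checked, the ranges of $\Pi_1,\Pi_2$ are verified directly, and Lemma \ref{lem:predeco} (iv) yields the projection and kernel properties. Your deviations are local and valid. You derive compatibility from Theorem \ref{thm:div0a} on each $\Omega_j$ together with $\mathrm{div}\,\mathrm{curl}=0$ in $\mathcal{D}'(\Omega_j)$, rather than from Lemma \ref{lem:nucuv} and the tangential divergence. You show $H[E]=0$ on $C^{0,\alpha}_{\mathrm{div}=0,t}$ by Neumann uniqueness; this step is actually redundant, since trivial intersection already forces $\Pi_2=0$ there. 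Finally, you get $\Pi_1=0$ on $C^{0,\alpha}_\nabla$ directly from Lemma \ref{lem:predeco} (iv) instead of through the paper's contradiction argument with the $L^2$ pairing.
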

{\bf Proof.} We first consider the Neumann problem (\ref{thm:hewedeco0a3}). By Proposition 
\ref{prop:dvpsnecr-1a}, the map ${\mathcal{P}}_\Omega^+[S_{3},E^\sharp_\Omega[{\mathrm{curl}} [\cdot]]]$  is linear and continuous from $C^{0,\alpha}(\overline{\Omega},{\mathbb{C}}^3)$ to
$C^{1,\alpha}(\overline{\Omega},{\mathbb{C}}^3)$.
By the continuity of the pointwise product in $C^{0,\alpha}(\partial\Omega)$ and by classical regularity properties of the acoustic single layer potential, the map from $C^{0,\alpha}(\overline{\Omega},{\mathbb{C}}^3)$ to $C^{1,\alpha}(\overline{\Omega},{\mathbb{C}}^3)$ that takes $E$ to $v_\Omega^+ [S_3,\nu_\Omega\bar{\wedge}E ]$ is linear and continuous (cf.~\textit{e.g.}, \cite[Thm.~7.1]{DoLa17}). Hence, the map $G$ from
$C^{0,\alpha}(\overline{\Omega},{\mathbb{C}}^3)$ to $C^{0,\alpha}(\partial \Omega,{\mathbb{C}}^3)$
that takes $E$ to
\[
G[E]\equiv \biggl\{\nu_\Omega\cdot{\mathrm{curl}} \biggl\{
{\mathcal{P}}_\Omega^+[S_3,E^\sharp_\Omega[{\mathrm{curl}} E]] 
-v^+_\Omega[S_3,\nu_\Omega \bar{\wedge}E]\biggr\} \biggr\}_{|\partial\Omega}
\]
is linear and continuous. By Lemma \ref{lem:nucuv} {\color{black}and } the Definition \ref{defn:tandiv} of tangential divergence, we have
\begin{eqnarray*} 
\lefteqn{
\int_{\partial\Omega_j}G[E]\cdot\nu_\Omega\,d\sigma
=\int_{\partial\Omega_j} \nu_\Omega\cdot {\mathrm{curl}} \biggl\{
{\mathcal{P}}_\Omega^+[S_3,E^\sharp_\Omega[{\mathrm{curl}} E]] 
-v^+_\Omega[S_3,\nu_\Omega \bar{\wedge}E]\biggr\} \,d\sigma
}
\\ \nonumber
&& 
=-\left\langle{\mathrm{div}}_{\partial\Omega}\biggl(
\nu_\Omega\bar{\wedge}\biggl\{
{\mathcal{P}}_\Omega^+[S_3,E^\sharp_\Omega[{\mathrm{curl}} E]] 
-v^+_\Omega[S_3,\nu_\Omega \bar{\wedge}E]\biggr\}
\biggr),\chi_{\partial\Omega_j}\right\rangle=0\,,
\end{eqnarray*}
for all $j\in\{1,\dots,\varkappa^+\}$
and accordingly $(0, G[E])\in Y_{1,\alpha}$ for each 
$E$ {\color{black}in	}	$C^{0,\alpha}(\overline{\Omega},{\mathbb{C}}^3)$ (see (\ref{prop:gintneuposchma0}) for the definition of $Y_{1,\alpha}$).  Thus  Proposition \ref{prop:gintneuposchma} implies that the map $H$ from $C^{0,\alpha}(\overline{\Omega},{\mathbb{C}}^3)$ to $C^{1,\alpha}(\overline{\Omega})$ that is delivered by 
\[
H[E]\equiv G_{1,\alpha}(0, G[E])\qquad \forall E\in C^{0,\alpha}(\overline{\Omega},{\mathbb{C}}^3)
\]
is linear and continuous and $H[E]$ solves the Neumann problem (\ref{thm:hewedeco0a3}) for all 
$E\in C^{0,\alpha}(\overline{\Omega},{\mathbb{C}}^3)$. Since  $H[E]$ satisfies the Neumann boundary condition of problem (\ref{thm:hewedeco0a3}), we have 
\[
\nu_\Omega\cdot \Pi_1[E]=0\qquad\forall E\in C^{0,\alpha}(\overline{\Omega},{\mathbb{C}}^3)\,.
\]
Since $H[E]$ is harmonic in $\Omega$, the known identities
\[
{\mathrm{div}}\,{\mathrm{curl}}=0\,,\qquad {\mathrm{div}}\,\nabla=\Delta
\]
imply that
\[
{\mathrm{div}}\,\Pi_1[E]=0\qquad\forall E\in C^{0,\alpha}(\overline{\Omega},{\mathbb{C}}^3)\,.
\]
Hence, 
\[
\Pi_1[E]\in C^{0,\alpha}_{{\mathrm{div}}=0,t}(\overline{\Omega},{\mathbb{C}}^3)\qquad\forall E\in C^{0,\alpha}(\overline{\Omega},{\mathbb{C}}^3)\,.
\] 
Moreover, the above mentioned classical regularity results and the above mentioned linearity and continuity  of $H[\cdot]$ imply that $\Pi_1$ is linear and continuous from $C^{0,\alpha}(\overline{\Omega},{\mathbb{C}}^3)$ to $C^{0,\alpha}_{{\mathrm{div}}=0,t}(\overline{\Omega},{\mathbb{C}}^3)$.\par

By the fundamental theorem of vector calculus in the form of Theorem \ref{thm:cudirepl} with $\lambda=0$, we have 
\begin{equation}\label{thm:hewedeco0a5}
E=\Pi_1[E]+\Pi_2[E]\qquad\forall E\in C^{0,\alpha}(\overline{\Omega},{\mathbb{C}}^3)\,.
\end{equation}
Thus if  $E\in C^{0,\alpha}_{{\mathrm{div}}=0,t}(\overline{\Omega},{\mathbb{C}}^3)$, then equality (\ref{thm:hewedeco0a5}) implies that
\[
\Pi_2[E]= E-\Pi_1[E]\in C^{0,\alpha}_{{\mathrm{div}}=0,t}(\overline{\Omega},{\mathbb{C}}^3)-C^{0,\alpha}_{{\mathrm{div}}=0,t}(\overline{\Omega},{\mathbb{C}}^3)=C^{0,\alpha}_{{\mathrm{div}}=0,t}(\overline{\Omega},{\mathbb{C}}^3)\,.
\]
 On the other hand, 
the definition  (\ref{thm:hewedeco0a4}) of $\Pi_2$ and the membership of $E$ in $C^{0,\alpha}_{{\mathrm{div}}=0,t}(\overline{\Omega},{\mathbb{C}}^3)$ imply that
\[
\Pi_2[E]=-\nabla H[E]\in C^{0,\alpha}_{\nabla}(\overline{\Omega},{\mathbb{C}}^3)\,.
\]
 Hence, $\Pi_2[E]\in C^{0,\alpha}_{{\mathrm{div}}=0,t}(\overline{\Omega},{\mathbb{C}}^3)\cap C^{0,\alpha}_{\nabla}(\overline{\Omega},{\mathbb{C}}^3)$ and accordingly 
Lemma \ref{lem:predeco} (iv) implies that $\Pi_2[E]=0$. Then again equality (\ref{thm:hewedeco0a5}) implies that 
$\Pi_1[E]=E$. Hence, $\Pi_1$ is a projection onto $C^{0,\alpha}_{{\mathrm{div}}=0,t}(\overline{\Omega},{\mathbb{C}}^3)$. By equality (\ref{thm:hewedeco0a5}), we also have $\Pi_2=I-\Pi_1$ and accordingly, $\Pi_2$ is   a projection onto 
 ${\mathrm{Im}}\, (I-\Pi_1)$ and 
\begin{eqnarray*} 
\lefteqn{
C^{0,\alpha}(\overline{\Omega},{\mathbb{C}}^3)={\mathrm{Im}}\,\Pi_1\oplus {\mathrm{Im}}\, (I-\Pi_1)
}
\\ \nonumber
&&\qquad\qquad\qquad 
 ={\mathrm{Im}}\,\Pi_1\oplus {\mathrm{Im}}\,\Pi_2=C^{0,\alpha}_{{\mathrm{div}}=0,t}(\overline{\Omega},{\mathbb{C}}^3)\oplus {\mathrm{Im}}\,\Pi_2\,,
\end{eqnarray*}
 where the direct sums are topological. 
By the linearity and continuity of $H[\cdot]$ and by the same classical results that we have invoked to prove that $\Pi_1$ is linear and continuous,  
the operator from   $C^{0,\alpha}(\overline{\Omega},{\mathbb{C}}^3)$ to $C^{1,\alpha}(\overline{\Omega})$ that takes $E$ to 
\[
{\mathcal{P}}_\Omega^+[S_3,E^\sharp_\Omega[{\mathrm{div}} E]]
-v^+_\Omega[S_3,\nu_\Omega\cdot E]-H[E]
\]
 is linear and continuous. Then 
  the definition of $\Pi_2$, implies that $\Pi_2$ is linear and continuous from $C^{0,\alpha}(\overline{\Omega},{\mathbb{C}}^3)$ to the subspace $C^{0,\alpha}_{\nabla}(\overline{\Omega},{\mathbb{C}}^3)$ of $C^{0,\alpha}(\overline{\Omega},{\mathbb{C}}^3)$. We now prove that  the image of $\Pi_2$ equals $C^{0,\alpha}_{\nabla}(\overline{\Omega},{\mathbb{C}}^3)$. Assume by contradiction that there exists $F\in C^{0,\alpha}_{\nabla}(\overline{\Omega},{\mathbb{C}}^3)\setminus {\mathrm{Im}}\,\Pi_2$. Then the membership of 
  $F$ in $C^{0,\alpha}_{\nabla}(\overline{\Omega},{\mathbb{C}}^3)$, the membership of $\Pi_1[F]$ in $C^{0,\alpha}_{{\mathrm{div}}=0,t}(\overline{\Omega},{\mathbb{C}}^3)$,
  the membership of $\Pi_2[F]$ in $C^{0,\alpha}_{\nabla}(\overline{\Omega},{\mathbb{C}}^3)$,
   and Lemma \ref{lem:predeco} (ii), (iii) imply that
\begin{eqnarray*} 
\lefteqn{
0=\int_\Omega  \Pi_1[F]\cdot \overline{F}\,dx=\int_\Omega  \Pi_1[F]\cdot\overline{\Pi_1[F]+\Pi_2[F]}\,dx
}
\\ \nonumber
&&\qquad 
=\int_\Omega  |\Pi_1[F]|^2\,dx+\int_\Omega  \Pi_1[F]\overline{\Pi_2[F]}\,dx=\int_\Omega  |\Pi_1[F]|^2\,dx\,.
\end{eqnarray*}
  Hence, $\Pi_1[F]=0$ and thus $F=\Pi_1[F]+\Pi_2[F]=\Pi_2[F]\in  {\mathrm{Im}}\,\Pi_2$, a contradiction. 
Accordingly, $  {\mathrm{Im}}\,\Pi_2=C^{0,\alpha}_{\nabla}(\overline{\Omega},{\mathbb{C}}^3)$. Since $\Pi_2$ is   a projection onto 
 $ {\mathrm{Im}}\,\Pi_2={\mathrm{Ker}}\, (I-\Pi_2)$ (cf.~\textit{e.g.}, Rudin \cite[5.15 (a)]{Ru91}), we have
 \[
 C^{0,\alpha}_{\nabla}(\overline{\Omega},{\mathbb{C}}^3)= {\mathrm{Im}}\,\Pi_2={\mathrm{Ker}}\, (I-\Pi_2)
 ={\mathrm{Ker}}\,\Pi_1\,.
 \]
 Since $\Pi_1$ is   a projection onto 
 ${\mathrm{Im}}\, \Pi_1={\mathrm{Ker}}\, (I-\Pi_1)$ (cf.~\textit{e.g.}, Rudin \cite[5.15 (a)]{Ru91}), we have 
 \[
 C^{0,\alpha}_{{\mathrm{div}}=0,t}(\overline{\Omega},{\mathbb{C}}^3)={\mathrm{Im}}\,\Pi_1={\mathrm{Ker}}\, (I-\Pi_1)
 ={\mathrm{Ker}}\,\Pi_2
 \]
and thus the proof is complete.\hfill  $\Box$ 

\vspace{\baselineskip}

 \noindent
{\bf Acknowledgement}
The author  acknowledges  the support of the Gruppo Nazionale per l'Analisi Matematica, la Probabilit\`a e le loro Applicazioni (GNAMPA) of the Istituto Nazionale di Alta Matematica (INdAM).

\end{document}